\documentclass{amsart}
\usepackage{pb-diagram}
\usepackage{amsfonts}
\usepackage{amsthm}
\usepackage{amsmath}
\usepackage{mathtools}
\usepackage{mathrsfs}
\usepackage{amssymb}
\usepackage{bm}
\usepackage{appendix}
\usepackage[all,cmtip,2cell]{xy}
\usepackage{comment}

\usepackage{autobreak}
\usepackage{enumitem}
\usepackage[hypertexnames=false]{hyperref} 
\usepackage[capitalize]{cleveref}
\crefname{equation}{}{}
\crefname{enumi}{}{}

\usepackage[\ifdefined\enabletodonotes\else disable\fi]{todonotes}

\usepackage{tikz}
\usetikzlibrary{positioning}
\usetikzlibrary{arrows.meta}

\numberwithin{equation}{section}
\newtheorem{thm}{Theorem}[subsection]
\newtheorem{prop}[thm]{Proposition}
\newtheorem{cor}[thm]{Corollary}
\newtheorem{lem}[thm]{Lemma}

\theoremstyle{definition}
\newtheorem{defn}[thm]{Definition}
\newtheorem{const}[thm]{Construction}

\theoremstyle{remark}
\newtheorem{rem}[thm]{Remark}

\author[M. Noda]{Masai Noda}
\address{%
  Department of Science and Technology,
  Graduate school of Medicine, Science and Technology,
  Shinshu University,
  Matsumoto, Nagano 390-8621, Japan
}
\email{24hs604b@shinshu-u.ac.jp}
\title[Adjoint pairs between categories of sheaves]{An adjunction of the categories of sheaves related to a topology and a diffeology
}

\subjclass[2020]{Primary 58A03; Secondary 58A40, 57P05, 18F10, 18F20}
\keywords{ Diffeology, sheaf, Grothendieck topology, topos.}
\thanks{}

\PassOptionsToPackage{final}{showkeys}
\PassOptionsToPackage{final}{showlabels}
\usepackage[final]{showkeys}
\begin{document}

\begin{abstract}
There exists an adjoint pair $(D, C)$ of functors between the category of diffeological
spaces and that of topological spaces. In this article, by using the functors, we
introduce two pairs of functors between the categories of sheaves on a diffeological space and on 
a topological space. Then, the adjointness of the novel functors up to the functors
induced by the unit and counit of $C$ and $D$ is clarified. 
There is no natural functor between sites obtained by a diffeological space and  a topological space. 
Therefore, the adjunctions on the categories of sheaves are elaborated without applying the general theory of topoi. 
\end{abstract}

\maketitle

\section{Introduction}
A {\it diffeological space} is a set endowed with a family of maps from open subsets of Euclidean spaces to the underlying set. The object is indeed regarded as a generalization of a smooth manifold. However, unlike a manifold, a general diffeological space is constructed by attaching domains of {\it plots} possibly with different dimensions. The category $\mathsf{Diff}$ of diffeological spaces is complete, cocomplete and Cartesian closed. Thus, we have naturally quotient spaces, mapping spaces and adjunction spaces in $\mathsf{Diff}$.

A sheaf is an important and powerful tool for extending certain local data to global ones. A sheaf on a diffeological space is introduced by Krepski, Watts and Worbert \cite{KWW} and Dehghan Nezhad and Ahmadi \cite{AA} individually. In the both definitions, the category of {\it plots} is used. 
The sheaf in the sense in \cite{KWW} is regarded as a family of sheaves on the domains of plots. On the other hand,
the sheaf described in  \cite{AA} is defined with {\it a compatible family} of plots which satisfies the usual sheaf condition. 
In what follows, a sheaf on a diffeological space is that in the sense in \cite{KWW} unless otherwise specified. 


It is known that there exists an adjunction $D:\mathsf{Diff}\rightleftarrows\mathsf{Top}:C$ between $\mathsf{Diff}$ and the category  $\mathsf{Top}$ of topological spaces such that the adjoints are identities on the underlying sets; 
see Section \ref{sect:diff} and the subsequent comment for more details. 
In this article, we construct functors $\widetilde{C}$ and $\widetilde{D}$ associated with $C$ and $D$ between the categories of $\mathcal{C}$-valued sheaves on a diffeological space and a topological space, where $\mathcal{C}$ is an appropriate category.

In order to describe our main results more precisely, we prepare terminology and notation. 
Let $Sh_{\mathsf{Diff}}^{\mathcal{C}}\mathcal{D}_{X}$ and $Sh_{\mathsf{Top}}^{\mathcal{C}}Open(Y)$ be
the category of the $\mathcal{C}$-valued sheaves on a diffeological space $(X,\mathcal{D}_{X})$ and that on a topological space $(Y, Open(Y))$, respectively.  We write $(X, Open(DX))$ and $(Y,\mathcal{D}_{CY})$ for $D(X, \mathcal{D}_{X})$ and $C(Y, Open(Y))$, respectively. 
We observe that there is no natural functor between the Grothendieck site of open subsets and that of plots. Therefore,  the formal method in sheaf theory would not be applicable to obtain the functors between the category of sheaves mentioned above.  
Nevertheless, the adjunction $(D, C)$ enables us to obtain adjoints between the categories of sheaves on a diffeological space and a topological space {\it up to} functors obtained by the unit and counit of $(D, C)$. 

Our main results in this manuscript are summarized as follows. 

\medskip
\noindent 
$\mathbf{Theorem \ A \ (Theorems \hspace{3pt}\ref{thm:main,eta.ver.}\hspace{2pt}and\hspace{2pt}\ref{thm:main,epsilon.ver}})$\emph{
There exist functors $\widetilde{D} : Sh_{\mathsf{Diff}}^{\mathcal{C}}\mathcal{D}_X \to Sh_{\mathsf{Top}}^{\mathcal{C}}Open(DX)$ and 
$\widetilde{C} : Sh_{\mathsf{Top}}^{\mathcal{C}}Open(Y) \to Sh_{\mathsf{Diff}}^{\mathcal{C}}\mathcal{D}_{CY}$ which give rise to 
adjoint pairs}
$$
\xymatrix@C=10pt@R=10pt{
Sh_{\mathsf{Diff}}^{\mathcal{C}}\mathcal{D}_{X}\ar[rr]^(.45){\widetilde{D}}&&Sh_{\mathsf{Top}}^{\mathcal{C}}Open(DX)\ar[ld]^{\widetilde{C}} \ \ \   \text{and}\\
&Sh_{\mathsf{Diff}}^{\mathcal{C}}\mathcal{D}_{CDX}\ar[lu]^{\eta^{*}_{X}}\ar@{}[u]|{\top} & \\
Sh_{\mathsf{Top}}^{\mathcal{C}}Open(Y)\ar[rr]^{\widetilde{C}}&&Sh_{\mathsf{Diff}}^{\mathcal{C}}\mathcal{D}_{CY}\ar[ld]^{\widetilde{D}}\\
&Sh_{\mathsf{Top}}^{\mathcal{C}}Open(DCY)\ar[lu]^{\varepsilon_{X*}}\ar@{}[u]|{\perp}, &  
}
$$
\emph{where $\eta^*_X$  and  $\varepsilon_{X*}$ are functors induced by the unit 
$\eta : \text{\em id}_{\mathsf{Diff}} \Rightarrow CD$ and 
the counit $\varepsilon : DC \Rightarrow \text{\em id}_{\mathsf{Top}}$, respectively.  
}

\medskip
The adjunction $(D, C)$ satisfies the condition that $CDC=C$ and $DCD=C$; see Remark \ref{rem:C-D}. 
The fact enables us to deduce that the functors $\widetilde{C}$ and $\widetilde{D}$ indeed give adjoint pairs. 

\medskip
\noindent
$\mathbf{Corollary \ B \ (Corollary\hspace{3pt}\ref{cor:adjunction}})$ \emph{One has adjoint pairs}
\begin{eqnarray}\label{eq:Triangle_1}
\xymatrix@C=55pt@R=20pt{
Sh_{\mathsf{Top}}^{\mathcal{C}}Open(DX)\ar@<1.0ex>[r]^{\widetilde{C}}_{\perp}&Sh_{\mathsf{Diff}}^{\mathcal{C}}\mathcal{D}_{CDX}\ar@<1.7ex>[l]^{\widetilde{D}}  \ \ \ \  \text{and} \\
Sh_{\mathsf{Diff}}^{\mathcal{C}}\mathcal{D}_{CY}\ar@<.8ex>[r]^(.4){\widetilde{D}}_(.4){\top}&Sh_{\mathsf{Top}}^{\mathcal{C}}Open(DCY).\ar@<1.7ex>[l]^(.6){\widetilde{C}}  \ \ \ \ \ 
}
\end{eqnarray}

 \begin{rem}
Roughly speaking, the functors $\widetilde{C}$ and $\widetilde{D}$ are defined with a colimit construction together with the sheafification and a limit construction, respectively; see Definitions \ref{defn:overlineC}, \ref{defn:widetildeC}, \ref{defn:top.sheafification}, \ref{defn:sheafification} and \ref{defn:wildetildeD}.  While the proof of Theorem \ref{thm:main,epsilon.ver} is the same as that of Theorem \ref{thm:main,eta.ver.}, we prove the result observing the change of the order of colimits and limits.
\end{rem}


The rest of the manuscript is organized as follows. 
Section \ref{sect:diff} begins with the definition of a diffeological space and some constructions in diffeology. 
The functors $C:\mathsf{Top}\to\mathsf{Diff}$ and $D:\mathsf{Diff}\to\mathsf{Top}$ aforementioned are introduced in this section by following \cite{IZ, CSW, Sh}. 
Section \ref{sect:diff-sheaves} recalls the definition of the diffeological sheaves of \cite{KWW} and 
the sheafification functor on $\mathsf{Diff}$ is described; see Propositions \ref{prop:sheafification0} and \ref{prop:sheafification}. 
The constructions of the functors $\widetilde{C}$ and $\widetilde{D}$ in Theorem A 
are explained in Section \ref{sect:CD}. 
Section \ref{sect:Proofs} is devoted to proving Theorem A and Corollary B. In Appendix A, we introduce a Grothendieck topos associated with a diffeological space 
defined in \cite{AA}.
It is proved that the topos is isomorphic to the $\mathsf{Sets}$-valued sheaf in \cite{KWW} whenever the underlying diffeological spaces coincide with each other. 
Appendix B is dedicated to proving Proposition \ref{prop:sheafification}. 

\begin{rem}
As seen in Appendix \ref{sect:AppA}, if $\mathcal{C}$ is the category of sets $\mathsf{Sets}$, the category $Sh^{\mathsf{Sets}}_{\mathsf{Diff}}\mathcal{D}_X$ is regarded as a Grothendieck topos. Then, the general theory of the shefification \cite[V. 5]{SMIM} is applicable to $Sh_{\mathsf{Diff}}^{\mathsf{Sets}}\mathcal{D}_X$. However, 
in order to define and consider the functor $\widetilde{C}$ mentioned above, we give explicitly the shefification functor in our context; see Proposition \ref{prop:sheafification}. 
\end{rem}

\section{An overview of diffeology}\label{sect:diff}
In this section, we recall the definition of diffeology and some basic properties that are used in this paper. 
We refer the reader to the book \cite{IZ} of Iglesias-Zemmour on diffeology for more details.
\subsection{Diffeology}
We begin with the definition of 
a diffeological spaces.
\begin{defn}
 Let $X$ be a set. A {\it parametrization} is a set-theoretic map to $X$ from an open subset of $\mathbb{R}^n$ for some $n$.  
 The set of parametrizations taking values in $X$ is denoted by $Param(X)$. 
 A {\emph{diffeology}} $\mathcal{D}_{X}$ of $X$ is a subset of $Param(X)$ satisfying the following conditions (D1), (D2) and (D3). 
\begin{enumerate}
\item[(D1)] ({\em{Covering axiom}}) All constant maps are in $\mathcal{D}_{X}$
\item[(D2)] ({\em{Locality axiom}}) Given a parametrization $p : U \to X$, if there exists an open cover $\{U_\lambda\}_{\lambda}$ of $U$ such that the restriction $p|_{U_\lambda}$ is in $\mathcal{D}_X$ for each $\lambda$, then $p$ is in $\mathcal{D}_{X}$. 
\item[(D3)] ({\em{Compatibility axiom}}) Let $p:U_{p}\to X$ be in $\mathcal{D}_{X}$ and $f:U_{f}\to U_{p}$ a smooth map from an open subset of $\mathbb{R}^m$ for some $m$. 
Then, the composite $p\circ f$ is in 
$\mathcal{D}_{X}$.
\end{enumerate}
\end{defn}
An element of $\mathcal{D}_{X}$ is called a $\emph{plot}$ of $X$. Note that $\mathcal{D}_{X}$ always includes a unique empty plot 
$\mathfrak{e}:\phi\to X$. 
In what follows, 
the domain of a plot $p$ is denoted by $U_{p}$. 
A pair $(X, \mathcal{D}_{X})$ of a set and a diffeology is called a {\emph{diffeological space}}.

\begin{defn}
(Smooth map) Let $(X,\mathcal{D}_{X})$ and $(Y,\mathcal{D}_{Y})$ be diffeological spaces. A map $f:X\to Y$ is $smooth$ if for each plot $p\in\mathcal{D}_{X}$, the composite $f\circ p$ is a plot of $Y$.  
\end{defn}
It is readily seen that diffeological spaces and smooth maps give a category. In what follows, the category is denoted by 
$\mathsf{Diff}$. An isomorphism in $\mathsf{Diff}$ is called a {\it diffeomorphism}.
\begin{defn}
We recall some basic constructions in $\mathsf{Diff}$. Let $(X,\mathcal{D}_{X})$ be a diffeological space and $\{(X_{\alpha},\mathcal{D}_{X_{\alpha}})\}_{\alpha}$ a family of  diffeological spaces.

(i) 
Let $A$ be a subset of $X$. The {\it subset diffeology} of $A$ is defined by the collection of plots in $\mathcal{D}_{X}$ whose images are in $A$.

(ii) 
The {\it sum diffeology} on $\coprod_{\alpha}X_{\alpha}$ is the set of parametrizations $p$ of $\coprod_{\alpha}X_{\alpha}$ which satisfy the condition 
that, for all $u\in U_{p}$, there exist an open neighborhood $V$ of $u$, $\alpha$ and $q_{\alpha}\in\mathcal{D}_{X_{\alpha}}$ such that $p|_{V}=q_{\alpha}\circ i$, where $i:V\hookrightarrow U_{q_{\alpha}}$ is  an inclusion map.

(iii) 
The {\it product diffeology} on $\prod_{\alpha}X_{\alpha}$ is the set of parametrizations $p$ of $\prod_{\alpha}X_{\alpha}$  each of which the composition $pr_\alpha \circ p$ with the projection $pr_\alpha$ in the $\alpha$th factor is a plot of $X_\alpha$. 

(iv) 
Given an equivalence relation $\sim$ on the underlying set of a diffeological space $(X, \mathcal{D}_X)$, the {\it quotient diffeology} of $X/\!\sim$ consists of the collection of $p\in Param(X/\!\sim)$ that for each $u\in U_{p}$, there exists a neighborhood $V$ of $u$ and $q\in\mathcal{D}_{X}$ such that $p|_{V}=\pi\circ q$, where 
 $\pi$ is the quotient map $X \to X/\!\sim$. 
\end{defn}
\begin{thm}\text{\em (\cite{CSW, BH})}
\text{\rm (Colimit/Limit in $\mathsf{Diff}$)} The category $\mathsf{Diff}$ is both complete and cocomplete. 
\end{thm}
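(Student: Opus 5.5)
The plan is to show that limits and colimits in $\mathsf{Diff}$ are created by the forgetful functor $U:\mathsf{Diff}\to\mathsf{Sets}$: take the limit or colimit of the underlying sets, then equip it with the initial (respectively final) diffeology with respect to the canonical maps. The proof reduces to checking that such initial and final diffeologies always exist and have the right universal property. Since $\mathsf{Sets}$ is complete and cocomplete, this gives the theorem.

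\textbf{Initial structures.} Given a set $L$ and maps $f_\alpha : L\to X_\alpha$ into diffeological spaces, I would let $\mathcal{D}_L$ be the set of parametrizations $p$ of $L$ such that $f_\alpha\circ p\in\mathcal{D}_{X_\alpha}$ for every $\alpha$. The axioms are checked one at a time.
\begin{enumerate}
\item[(D1)] Constants are preserved by composition.
\item[(D2)] Locality holds because each $\mathcal{D}_{X_\alpha}$ is local and restriction commutes with composition.
\item[(D3)] Compatibility holds because $(f_\alpha\circ p)\circ g=f_\alpha\circ(p\circ g)$.
\end{enumerate}
For the universal property, a map $h:(Z,\mathcal{D}_Z)\to L$ is smooth if and only if every $f_\alpha\circ h$ is smooth, which is immediate from the definition. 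Now let $F:I\to\mathsf{Diff}$ be a diagram, let $L=\lim UF$ be its set-theoretic limit (a subset of $\prod_i UF(i)$), and give $L$ the initial diffeology for the projections. A cone from $Z$ factors uniquely through $L$ in $\mathsf{Sets}$, and the induced map is smooth by the universal property. This construction is the same as taking the product diffeology (iii) followed by the subset diffeology (i).

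\textbf{Final structures.} This is the step I expect to need the most care. Given maps $g_\alpha:X_\alpha\to Q$, I would define $\mathcal{D}_Q$ to be the set of parametrizations $p$ of $Q$ such that each $u\in U_p$ has a neighbourhood $V$ on which either $p|_V$ is constant, or $p|_V=g_\alpha\circ q$ for some $\alpha$ and some $q\in\mathcal{D}_{X_\alpha}$. This is the recipe of (ii) and (iv).

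The constant clause is included so that (D1) holds even when no $X_\alpha$ maps onto a given point, in particular when the index set is empty. Locality (D2) is built into the local definition. Compatibility (D3) follows by pulling neighbourhoods back along a smooth $f$ and using (D3) in $X_\alpha$. The delicate point in (D3) is that the open sets must be taken inside $U_f$, so I would use the preimages $f^{-1}(V)$.

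For the universal property, suppose $k:Q\to Z$ is a map with every $k\circ g_\alpha$ smooth. Then for $p\in\mathcal{D}_Q$, locally $k\circ p$ equals either a constant or $(k\circ g_\alpha)\circ q$, hence is a plot of $Z$, and so $k\circ p$ is a plot by (D2). Conversely, each $g_\alpha$ is smooth by construction.

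\textbf{Colimits.} Take $Q=\operatorname{colim} UF$, which is the quotient of $\coprod_i UF(i)$, and give it the final diffeology for the coprojections. Cocones of $F$ then correspond bijectively to smooth maps out of $Q$. This completes the argument that $\mathsf{Diff}$ is both complete and cocomplete.
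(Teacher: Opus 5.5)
Your proof is correct. The paper gives no argument of its own for this statement beyond citing \cite{CSW, BH}, and your construction (take the set-theoretic limit or colimit and equip it with the initial or final diffeology) is the standard argument. The subset-of-product diffeology ((iii) followed by (i)) and the quotient-of-sum diffeology ((ii) followed by (iv)), which the paper recalls just before the statement, are exactly your initial and final structures.
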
 

We stress that the category of (possibly infinite dimensional) manifolds modeled on locally convex spaces with convenient calculus (\cite{K-M}) embeds in the category $\mathsf{Diff}$; see \cite[Lemma 2.5]{Kihara}. 

For a topological space $(X,Open(X))$, let $C(X, Open(X))$ be the diffeological space with the same underlying set $X$ and with all continuous maps from open subsets of Euclidian spaces into $X$ as plots. 
It is readily seen that a continuous map $f:X\to Y$ induces a smooth map $C(X,Open(X))\to C(Y,Open(Y))$. Then, we have a functor $C:\mathsf{Top}\to\mathsf{Diff}$, where 
$\mathsf{Top}$ denotes the category of topological spaces. 

For a diffeological space $(X,\mathcal{D}_X)$, there exists the finest topology such that the plots are continuous, which is called the \emph{D-topology} of $X$. The open sets of the topology are characterized by the following property: A subset $A\subset X$ is open for the $D$-topology if and only if, for every plot $p\in \mathcal{D}_X$, $p^{-1}(A)$ is open. The open sets of the $D$-topology are called \emph{D-open sets}. A smooth map $(X,\mathcal{D}_X)\to (Y,\mathcal{D}_{Y})$ gives a continuous when $X$ and $X'$ are equipped with the $D$-topology. Thus we have a functor $D:\mathsf{Diff}\to\mathsf{Top}$.

\begin{rem}\label{rem:C-D}
The functor $D$ is the left adjoint to $C$. Moreover, these functor satisfy $CDC=C$ and $DCD=D$; see \cite[Proposition 3.1]{Sh} and \cite[Proposition 3.3]{CSW} for more details. 
\end{rem}
\begin{defn}\label{defn:category of plots}
 Let $(X,\mathcal{D}_{X})$ be a diffeological space.  The {\it category of plots} is a category comprising plots of $X$ as objects and morphisms 
 $f : p\to q$ each of which gives a commutative triangle
$$
\xymatrix@C20pt@R10pt{
&X&\\
U_{p}\ar[ru]^{p}\ar[rr]_{f}&&U_{q}, \ar[lu]_{q}
}
$$
where $p$ and $q$ are plots and $f: U_{p}\to U_{q}$ is a smooth map.
\end{defn}
By abuse of notation, we also write $\mathcal{D}_{X}$ for the category of plots.

\section{A sheaf theory on diffeological spaces}\label{sect:diff-sheaves}
We begin by recalling the definition of a sheaf on a diffeological space. 
The sheaf is defined with the category of plots. 
\subsection{Sheaves on a diffeological space}
\begin{defn}
\label{defn:presheaf}
Let  $\mathcal{D}_{X}$ be the category of plots of a diffeological space $(X,\mathcal{D}_X)$. Let $\mathcal{C}$ denote one of categories $\mathsf{Sets}$ of sets,  $\mathsf{Grp}$ of groups, $\mathsf{Ab}$ of abelian groups and $\mathsf{Vect}$ of vector spaces. A {\it $\mathcal{C}$-valued presheaf} is a functor $F:\mathcal{D}_{X}^{op}\to\mathcal{C}$. Let $PSh^{\mathcal{C}}_{\mathsf{Diff}}\mathcal{D}_{X}$ be the category consisting of  $\mathcal{C}$-valued presheaves on $\mathcal{D}_{X}$ as objects and  natural transformations as morphisms.
\end{defn}

The following definition of a sheaf is due to Krepski, Watts and Wolbert \cite{KWW}. 

\begin{defn} 
\label{defn:sheaf}
 Let $F:\mathcal{D}_{X}^{op}\to\mathcal{C}$ be a $\mathcal{C}$-valued presheaf. 
 For a plot $p \in \mathcal{D}_{X}$, let $ Open(U_{p})$ be the category consisting of open subsubsets of $U_{p}$ as objects and inclusion maps as morphisms. 
For each plot $p:U_{p}\to X$, one obtains a presheaf $F|_{p}:Open(U_{p})^{op}\to\mathcal{C}$  in the classical sense defined by $F|_p(V)=F(p\circ\iota)$, where $\iota : V \hookrightarrow U_p $ is the  inclusion map. 

A $\mathcal{C}$-valued presheaf $F:\mathcal{D}_{X}^{op}\to\mathcal{C}$ is a {\it $\mathcal{C}$-valued sheaf} if the presheaf $F|_{p}$ is a sheaf on $Open(U_{p})$ for every plot of $p\in\mathcal{D}_{X}$ and $F(\mathfrak{e})$ is  the terminal object of $\mathcal{C}$, where $\mathfrak{e}$ is an empty plot of $\mathcal{D}_X$; see Section \ref{sect:diff}.
The full subcategory of $PSh_{\mathsf{Diff}}^{\mathcal{C}}\mathcal{D}_X$  of the $\mathcal{C}$-valued sheaves is denoted by $Sh_{\mathsf{Diff}}^{\mathcal{C}}\mathcal{D}_{X}$. 
\end{defn}

We recall the sheafification functor for preshaves on a topological space. 

\begin{defn}\label{defn:top.sheafification}
 (\cite[6.17]{S}) Let $X$ be a topological space and $Open(X)$ the category consisting of open subsets of $X$ and  inclusions. For a given presheaf  $F:Open(X)^{op}\to\mathcal{C}$, sheafification of $F$ is defined by 
$$
a_XF(U):=\{(s_u)_u\in\prod_{u\in U}F_u| \  (**) \ \},
$$
where $F_u$ is the stalk of $F$ at $u$ and $(**)$ denotes the condition: 
\begin{itemize}
\item[] For every $w\in U$, there exists an 
open neighborhood $w\in \mathcal{U}^w\subset U$, and a section $\sigma^w\in F(\mathcal{U}^w)$ such that for all $v\in \mathcal{U}^w$, we have $s_v=[\sigma^w]_v$ in $F_v$.
\end{itemize}
The element $[\sigma^w]_v$ denotes an equivalence class of $\sigma^w$ in $F_v$. For each inclusion $\iota:U'\hookrightarrow U$, the map $a_X(\iota):a_XF(U)\to a_XF(U')$ is defined by  $a_X(\iota)\hspace{2pt}((s_u)_{u\in U})=(s_u)_{u\in U'}$. We have the presheaf $a_XF:Open(X)^{op}\to \mathcal{C}$ which satisfies the sheaf condition.
Then, one has a functor $a_X:PSh_{\mathsf{Top}}^{\mathcal{C}}Open(X)\to Sh_{\mathsf{Top}}^{\mathcal{C}}Open(X)$, which is called {\it the sheafification functor}.  
\end{defn}

The functor $a_X$ is the left adjoint to the inclusion functor   $i_X:Sh_{\mathsf{Top}}^{\mathcal{C}}Open(X)\hookrightarrow PSh_{\mathsf{Top}}^{\mathcal{C}}Open(X)$; see, for example, \cite[6.17]{S} and \cite[II, Corollary 4]{SMIM}.

\begin{thm}\label{thm:univ.sheafification}
$($The universality of sheafification functor for presheaves on $Open(X))$ Let $X$ be a topological space and $F\in Sh_{\mathsf{Top}}^{\mathcal{C}}Open(X)$. Let $g_{XP}:P\to a_XP$ be a morphism in $ PSh_{\mathsf{Top}}^{\mathcal{C}}Open(X)$ defined by $(g_{XP})_{U}(s_U)=([s_U]_u)_{u\in U}$, where $P\in PSh_{\mathsf{Top}}^{\mathcal{C}}Open(X)$, $s_U\in P(U)$ and $U\in Open(X)$. Then,  for each $Q\in Sh_{\mathsf{Top}}^{\mathcal{C}}Open(X)$ and morphism $\kappa:P\to i_X(Q)$ in $PSh_{\mathsf{Top}}^{\mathcal{C}}Open(X)$, there exists a unique map $\widetilde{\kappa}:a_{X}P\to i_X(Q)$  which fits in  the commutative diagram 
$$
\xymatrix@R=14pt{
P\ar[rr]^{\kappa}\ar[rd]_{g_{XP}}&&i_X(Q), \\
&a_XP\ar@{}[u]|{\circlearrowright}\ar[ru]_{\widetilde{\kappa}}&
}
$$
where $i_X$ denotes the inclusion functor $i_X:Sh_{\mathsf{Top}}^{\mathcal{C}}Open(X)\hookrightarrow PSh_{\mathsf{Top}}^{\mathcal{C}}Open(X)$.
\end{thm}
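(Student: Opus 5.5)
We will prove the universal property by building $\widetilde{\kappa}$ stalkwise and then checking uniqueness with the sheaf condition on $Q$. Throughout, we identify sections of $Q$ with their germs, which is legitimate because $Q$ is a sheaf.

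\textbf{Step 1: stalk maps and the candidate $\widetilde{\kappa}$.} For each $u\in X$, the morphism $\kappa$ induces a map of stalks $\kappa_u:P_u\to Q_u$, given by $[s]_u\mapsto[\kappa_V(s)]_u$. It is well defined because $\kappa$ is natural, so it commutes with restrictions. For an open $U$ and an element $(s_u)_{u\in U}\in a_XP(U)$, condition $(**)$ provides an open cover $\{\mathcal{U}^w\}_{w\in U}$ of $U$ and sections $\sigma^w\in P(\mathcal{U}^w)$ with $s_v=[\sigma^w]_v$ for all $v\in\mathcal{U}^w$.

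We then consider the local sections $\kappa_{\mathcal{U}^w}(\sigma^w)\in Q(\mathcal{U}^w)$. On an overlap $\mathcal{U}^w\cap\mathcal{U}^{w'}$, their germs agree at every point $v$, since both equal $\kappa_v(s_v)$. Because $Q$ is a sheaf, sections with the same germs everywhere coincide; this is the identity axiom applied to a cover by small neighbourhoods on which the germs are represented. So the local sections agree on overlaps. The gluing axiom then gives a unique $t\in Q(U)$, and we define $\widetilde{\kappa}_U((s_u)_u):=t$.

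\textbf{Step 2: well-definedness and naturality.} The section $t$ is characterized intrinsically by $[t]_v=\kappa_v(s_v)$ for all $v\in U$. It therefore does not depend on the choice of cover or of the $\sigma^w$. Compatibility with restriction $U'\subset U$ follows from the same characterization, since $a_X(\iota)$ simply forgets coordinates. When $\mathcal{C}$ is $\mathsf{Grp}$, $\mathsf{Ab}$ or $\mathsf{Vect}$, $\widetilde{\kappa}_U$ is a homomorphism. Indeed, the stalk maps are homomorphisms, the algebraic operations on $a_XP(U)$ are computed coordinatewise, and a section of $Q$ is determined by its germs.

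\textbf{Step 3: commutativity and uniqueness.} For $s\in P(U)$ we have $g_{XP}(s)=([s]_u)_u$. We may take the single cover $\{U\}$ with $\sigma=s$, which gives $\widetilde{\kappa}_U g_{XP}(s)=\kappa_U(s)$, so the triangle commutes. For uniqueness, suppose $\lambda:a_XP\to Q$ also satisfies $\lambda\circ g_{XP}=\kappa$. Given $(s_u)_u\in a_XP(U)$, its restriction to each $\mathcal{U}^w$ equals $g_{XP}(\sigma^w)$, because both tuples have coordinates $[\sigma^w]_v$. By naturality of $\lambda$, we get $\lambda_U((s_u)_u)|_{\mathcal{U}^w}=\kappa(\sigma^w)=\widetilde{\kappa}_U((s_u)_u)|_{\mathcal{U}^w}$. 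The identity axiom for $Q$ then forces $\lambda=\widetilde{\kappa}$.

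The main obstacle is the overlap-compatibility check in Step 1, which requires the fact that sections of a sheaf are determined by their germs. We obtain this fact from the identity axiom: if two sections have the same germ at $v$, they already agree on some neighbourhood of $v$, and these neighbourhoods cover the open set. We expect everything else to be formal.
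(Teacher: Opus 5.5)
Your proof is correct and complete. The stalk maps $\kappa_u$, the gluing of the local images $\kappa_{\mathcal{U}^w}(\sigma^w)$, the characterization $[t]_v=\kappa_v(s_v)$ (which gives independence of choices and naturality at once), the homomorphism check, and the uniqueness argument via $(s_u)_u|_{\mathcal{U}^w}=g_{XP}(\sigma^w)$ all hold.

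The paper does not prove this theorem; it quotes it as standard from \cite[6.17]{S} and \cite[II]{SMIM}. Those sources typically argue more structurally, in one of two ways. In the first, $a_XP$ is identified with the sheaf of continuous sections of the \'etale space of germs of $P$; continuity of a section $u\mapsto s_u$ is exactly condition $(**)$, and the universal property comes from the presheaf/\'etale-space adjunction. In the second, one applies $a_X$ to $\kappa$, getting $(s_u)_u\mapsto(\kappa_u(s_u))_u$, and composes with the inverse of $g_{XQ}$, which is an isomorphism because $Q$ is a sheaf.

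Your $t$ is exactly $g_{XQ}^{-1}\bigl((\kappa_u(s_u))_u\bigr)$. Your gluing step is the surjectivity of $g_{XQ}$, and your fact that sections are determined by their germs is its injectivity, so you have unwound the second route by hand. This matches Appendix~\ref{sect:AppB}, where the construction of $\widetilde{\kappa}$ is said to be the same as that of $\lambda\mapsto\mathfrak{a}_X\lambda$.

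What your version buys is self-containment: it works directly with the explicit formula of Definition~\ref{defn:top.sheafification}, with no \'etale spaces and no separate isomorphism lemma. It is also the same local-representatives-and-gluing pattern the paper uses for $\alpha$ and $\gamma$ (Construction~\ref{const:alpha}, Lemmas~\ref{lem:compatible} and~\ref{lem:well-def of alpha}). There your germ characterization would replace the paper's merging of two compatible families with a one-line argument.
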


See \cite[II]{SMIM} for more details about the sheafification functor of preseaves on a topological space.

\begin{defn}\label{defn:sheafification}
 (The sheafification functor for preshaves on $\mathcal{D}_X$)
 Let $\mathcal{D}_{X}$ be the category of plots on a diffeological space $(X,\mathcal{D}_X)$. An  
 assignment $\mathfrak{a}_{X}: PSh_{\mathsf{Diff}}^{\mathcal{C}}\mathcal{D}_X\to Sh_{\mathsf{Diff}}^{\mathcal{C}}\mathcal{D}_X$ is defined by 
 $$
 \mathfrak{a}_{X}F(p):=a_{U_{p}}F|_{p}(U_{p})
 $$
 for a presheaf $F$ and a plot $p\in\mathcal{D}_X$, 
 where $a_{U_{p}}$ denotes the sheafification functor of preseaves on $U_{p}$; see Definition \ref{defn:top.sheafification}. For each natural transformation $\lambda:F\to G$, a plot $p\in\mathcal{D}_X$ and an element $(a_u)_{u\in U_p}\in \mathfrak{a}_XF(p)$, we define $(\mathfrak{a}_X(\lambda))_p:\mathfrak{a}_XF(p)\to\mathfrak{a}_XG(p)$ by
 $$
 (\mathfrak{a}_X(\lambda))_p((a_{u})_{u\in U_{p}})=(\lambda_{u}(a_{u}))_{u\in U_{p}}
 $$
 for each $p$, where $\lambda_{u} : \text{colim}_{u\in V}F|_{p}(V)\to \text{colim}_{u\in V}G|_{p}(V)$ is the map between the stalks at $u\in U_p$ induced by the universality of the stalk of $F|_p$ at $u$.  
\end{defn} 

The following propositions give the well-definedness of $\mathfrak{a}_X$.

\begin{prop}\label{prop:well-def.sheafification}
For $F\in PSh_{\mathsf{Diff}}^{\mathcal{C}}\mathcal{D}_X$, the functor $\mathfrak{a}_{X}F$ defined in Definition \ref{defn:sheafification} is a sheaf on 
$\mathcal{D}_X$.
\end{prop}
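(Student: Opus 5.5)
Three things need checking: that $\mathfrak{a}_XF$ is a presheaf on $\mathcal{D}_X$ (functorial along \emph{all} plot morphisms, not just inclusions), that each restriction $(\mathfrak{a}_XF)|_p$ is a sheaf on $Open(U_p)$, and that $\mathfrak{a}_XF(\mathfrak{e})$ is terminal.

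\textbf{Step 1: functoriality.} For a morphism $f:p\to q$ in $\mathcal{D}_X$, i.e.\ a smooth $f:U_p\to U_q$ with $q\circ f=p$, I define
\[
\mathfrak{a}_XF(f):a_{U_q}F|_q(U_q)\to a_{U_p}F|_p(U_p),\qquad (s_v)_{v\in U_q}\mapsto (f^\#(s_{f(u)}))_{u\in U_p}.
\]
Here $f^\#:(F|_q)_{f(u)}\to (F|_p)_u$ is induced on stalks as follows. For an open $W\ni f(u)$, take the open set $f^{-1}(W)\ni u$ and apply $F$ to the plot morphism $f|_{f^{-1}(W)}:p|_{f^{-1}(W)}\to q|_W$. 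These maps are compatible with restrictions because $F$ is a functor, so they pass to the colimit.

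I then check that the image family satisfies $(**)$. Given $w\in U_p$, choose $\mathcal{U}^{f(w)}$ and $\sigma^{f(w)}$ for the family $(s_v)$. Take the neighbourhood $f^{-1}(\mathcal{U}^{f(w)})$ of $w$ and the section $F(f|)(\sigma^{f(w)})$. By construction its germs equal $f^\#(s_{f(u)})$. Preservation of identities and composition follows from functoriality of $F$ and the universal property of stalks. When $f$ is an inclusion, this map is exactly the restriction map $a_{U_q}(\iota)$ of Definition~\ref{defn:top.sheafification}.

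\textbf{Step 2: sheaf condition on each $U_p$, the main point.} I must identify $(\mathfrak{a}_XF)|_p(V)=a_V(F|_{p\circ\iota})(V)$ with $a_{U_p}(F|_p)(V)$ for every open $V\subset U_p$. By definition $F|_{p\circ\iota}$ is the restriction of the presheaf $F|_p$ to $Open(V)$, since the opens of $V$ are exactly the opens of $U_p$ contained in $V$. Consequently:
\begin{itemize}
\item the stalks at $u\in V$ agree, since the neighbourhoods inside $V$ are cofinal among all neighbourhoods of $u$;
\item condition $(**)$ for $V$ is literally the same condition in both settings;
\item the restriction maps agree, by the last remark of Step 1.
\end{itemize}
Hence $(\mathfrak{a}_XF)|_p\cong a_{U_p}(F|_p)$ as presheaves on $Open(U_p)$. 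The right-hand side is a sheaf by Definition~\ref{defn:top.sheafification}, so $(\mathfrak{a}_XF)|_p$ is a sheaf. The care needed lies in making this identification natural in $V$, including the cofinality of stalks.

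\textbf{Step 3: the empty plot.} For $\mathfrak{e}$, $\mathfrak{a}_XF(\mathfrak{e})$ is the set of families indexed by the empty set, hence the empty product. This is the terminal object of $\mathcal{C}$ (a singleton, or the trivial group or space), and $(**)$ holds vacuously.
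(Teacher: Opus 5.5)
Your proposal is correct and follows the paper's proof. The action on a plot morphism $f$ is defined through the stalk maps induced by $F(f|_{f^{-1}(W)})$, and the sheaf property of $(\mathfrak{a}_XF)|_p$ comes from identifying $a_V(F|_{p\circ\iota})(V)$ with $a_{U_p}(F|_p)(V)$.

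You are in fact more thorough than the paper on three points it leaves implicit: you check that the image family satisfies $(**)$, that stalks and restriction maps agree under the identification (via cofinality), and that $\mathfrak{a}_XF(\mathfrak{e})$ is terminal.
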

\begin{proof}
First, we prove that the functor $\mathfrak{a}_XF$ is a presheaf on $\mathcal{D}_X$. 
By definition, for each morphism $f:p\to q$ of $\mathcal{D}_X$, $V\subset U_q$ and $([s_{\mathcal{V}}]_v)_{v\in V}\in\mathfrak{a}_XF(q|_V)$, where $v\in\mathcal{V}$ is an open subset of $U_q$ and $s_{\mathcal{V}}\in F|_q(\mathcal{V})$, 
 we see that $\mathfrak{a}_XF(f)([s_{\mathcal{V}}]_v)_{v\in V}=([F(f|_{f^{-1}(\mathcal{V})})(s_{\mathcal{V}})]_{u})_{u\in f^{-1}(V)}$. Note that each map between stalks sends a germ $[s_{\mathcal{V}}]_v$ to $[F(f|_{f^{-1}(\mathcal{V})})(s_{\mathcal{V}})]_u$, where $f(u)=v$. Thus,  $\mathfrak{a}_XF:\mathcal{D}_X^{op}\to\mathcal{C}$. 
 
 To show that $\mathfrak{a}_XF$ is a sheaf in the sense  of Definition \ref{defn:sheaf}, it suffices to prove that $(\mathfrak{a}_XF)|_p$ is a sheaf on $U_p$ ; see Definition \ref{defn:top.sheafification}. For any open set $V\subset U_p$, we have 
$$
\xymatrix{
(\mathfrak{a}_XF)|_p(V)=(\mathfrak{a}_XF)(p\circ \iota)=a_VF|_{p\circ\iota}(V)=
a_{U_p}F|_p(V),
}
$$
where $\iota:V\hookrightarrow U_p$. The first two equalities follow from the definitions. 
The last equality is obtained by comparing the two sets. 
In fact, the elements of $a_VF|_{p\circ\iota}(V)$ have the form  $(a_x)_{x\in V}\in\prod_{x\in V}(F|_{p\circ\iota})_x$ satisfying the condition $(**)$ for any $x\in V$. Then, there exists an open subset $x\in\mathcal{U}^x\subset V$ and a section $\sigma^x\in F|_{p\circ\iota}(\mathcal{U}^x)$ such that $[\sigma^x]_w=a_w $ for any $w\in\mathcal{U}^x$; see the property $(**)$ in Definition \ref{defn:top.sheafification}. Observe  that these open subsets $\mathcal{U}^x\subset V$ are also ones of $U_p$ and 
$$
F|_{p\circ\iota}(\mathcal{U}^x)=F(p\circ\iota\circ\iota^x)=F(p|_{\mathcal{U}^x})=F|_p(\mathcal{U}^x), 
$$
where $\iota^x:\mathcal{U}^x\hookrightarrow V$. Therefore, the section $\sigma^x$ is an element of $F|_p(\mathcal{U}^x)$. Thus, $a_VF|_{p\circ\iota}(V)\subset a_{U_p}F|_p(V)$. 
The same argument as above gives the converse inclusion. It turns out that $(\mathfrak{a}_XF)|_p$ is a sheaf on $U_p$.
\end{proof}

\begin{prop}\label{prop:sheafification0}
The 
assignment $\mathfrak{a}_{X}$ from  $PSh_{\mathsf{Diff}}^{\mathcal{C}}\mathcal{D}_{X}$ to $Sh_{\mathsf{Diff}}^{\mathcal{C}}\mathcal{D}_X$ in Definition \ref{defn:sheafification} is a well-defined functor.  
\end{prop}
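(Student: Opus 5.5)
Proposition \ref{prop:well-def.sheafification} already shows that $\mathfrak{a}_XF$ is an object of $Sh_{\mathsf{Diff}}^{\mathcal{C}}\mathcal{D}_X$. What remains is to check three things: for a natural transformation $\lambda:F\to G$, the family $(\mathfrak{a}_X(\lambda))_p$ is well defined; it is natural in $p$; and the assignment respects identities and composition. I will identify $\mathfrak{a}_XF(p)=a_{U_p}F|_p(U_p)$ and work with germs throughout. The key observation is that $\lambda$ restricts, for each plot $p$, to a morphism of classical presheaves $\lambda|_p:F|_p\to G|_p$ on $Open(U_p)$, given by $(\lambda|_p)_V=\lambda_{p\circ\iota}$. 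Naturality of $\lambda$ with respect to the inclusions $p\circ\iota'\to p\circ\iota$ gives naturality of $\lambda|_p$.

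\textbf{Step 1 (well-definedness at a plot).} Each stalk map $\lambda_u$ is induced by the universal property of the colimit, so it sends a germ $[\sigma]_u$ with $\sigma\in F|_p(\mathcal{V})$ to $[\lambda_{p|_{\mathcal{V}}}(\sigma)]_u$. Take $(a_u)_u$ satisfying $(**)$, witnessed locally by $\sigma^w\in F|_p(\mathcal{U}^w)$. Then $(\lambda_u(a_u))_u$ satisfies $(**)$, witnessed by $\lambda_{p|_{\mathcal{U}^w}}(\sigma^w)$. So $(\mathfrak{a}_X(\lambda))_p$ lands in $\mathfrak{a}_XG(p)$. In fact it equals $(a_{U_p}(\lambda|_p))_{U_p}$, so it is a morphism in $\mathcal{C}$ because $a_{U_p}$ is a functor into $\mathcal{C}$-valued sheaves.

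\textbf{Step 2 (naturality in $p$).} This is where the real content lies. Take a morphism $f:p\to q$ in $\mathcal{D}_X$ and an element $([s_{\mathcal{V}}]_v)_{v\in U_q}$ of $\mathfrak{a}_XF(q)$. Use the explicit formula from the proof of Proposition \ref{prop:well-def.sheafification}:
$$\mathfrak{a}_XF(f)\bigl(([s_{\mathcal{V}}]_v)_v\bigr)=\bigl([F(f|_{f^{-1}(\mathcal{V})})(s_{\mathcal{V}})]_u\bigr)_{u\in U_p}.$$
Composing with $\mathfrak{a}_X(\lambda)_p$ yields the germs $[\lambda_{p|_{f^{-1}\mathcal{V}}}F(f|)(s_{\mathcal{V}})]_u$. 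The other composite yields $[G(f|)\lambda_{q|_{\mathcal{V}}}(s_{\mathcal{V}})]_u$. These agree by naturality of $\lambda$ applied to the morphism $f|_{f^{-1}(\mathcal{V})}:p|_{f^{-1}(\mathcal{V})}\to q|_{\mathcal{V}}$ in $\mathcal{D}_X$.

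The main obstacle is bookkeeping: every element of the sheafification is only locally represented by sections. So I must check that the computation is independent of the chosen representatives $s_{\mathcal{V}}$ and neighborhoods. Germs are equal whenever the sections agree on a smaller neighborhood, and $F(f|)$ and $\lambda$ are compatible with restriction. Shrinking $\mathcal{V}$ therefore does not change either side.

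\textbf{Step 3 (functoriality).} For $\lambda=\mathrm{id}_F$ every stalk map is the identity, so $\mathfrak{a}_X(\mathrm{id}_F)=\mathrm{id}$. For $F\xrightarrow{\lambda}G\xrightarrow{\mu}H$, the uniqueness part of the universal property of stalks (colimits) gives $(\mu\lambda)_u=\mu_u\lambda_u$. Applying this componentwise yields $\mathfrak{a}_X(\mu\lambda)=\mathfrak{a}_X(\mu)\mathfrak{a}_X(\lambda)$.
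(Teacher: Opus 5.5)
Your proposal is correct and follows essentially the same route as the paper. Both arguments get well-definedness of $(\mathfrak{a}_X\lambda)_p$ by pushing the local witnesses through $\lambda_{p|_{\mathcal{U}^w}}$ and the colimit-induced stalk maps, get naturality by comparing the two germ formulas via naturality of $\lambda$ on $f|_{f^{-1}(\mathcal{V})}$, and get functoriality from uniqueness of the induced stalk maps. Your identification of $(\mathfrak{a}_X\lambda)_p$ with $(a_{U_p}(\lambda|_p))_{U_p}$ is a small extra that makes it immediate that this is a morphism in $\mathcal{C}$.
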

\begin{proof} 
Proposition \ref{prop:well-def.sheafification} implies that 
$\mathfrak{a}_XF$ is in $Sh_{\mathsf{Diff}}^{\mathcal{C}}\mathcal{D}_X$ for $F \in  PSh_{\mathsf{Diff}}^{\mathcal{C}}\mathcal{D}_X$. Then, 
it suffices to prove that 
the assignment $\mathfrak{a}_X$ is a functor. 
Let $\lambda : F \to G$ be a morphism of $PSh_{\mathsf{Diff}}^{\mathcal{C}}\mathcal{D}_X$, namely, a natural transformation. 
First, for $p \in \mathcal{D}_X$, we establish the well-definedness of $(\mathfrak{a}_X\lambda)_p:\mathfrak{a}_XF(p)\to\mathfrak{a}_XG(p)$ with a diagram 
\begin{eqnarray}\label{diag:widetilde.D.lambda}
\xymatrix@C25pt@R1pt{
&&&(F|_{p})_{v}\ar@{.>}[ddd]^(.4){\lambda_{v}}\\
F|_{p}(\mathcal{U}^{w})\ar[ddd]^(.4){\lambda_{\mathcal{U}^{w}}}\ar[rrru]^(.6){*}\ar[rr]^(.6){*}\ar[rd]^{*}&&(F|_{p})_{v'}\ar@{.>}[ddd]^(.4){\lambda_{v'}}&\\
&(F|_{p})_{v''}\ar@{.>}[ddd]^(.35){\lambda_{v''}}&&\\
&&&(G|_{p})_{v},\\
G|_{p}(\mathcal{U}^{w})\ar[rrru]^(.6){\star}\ar[rr]^(.7){\star}\ar[rd]^(.6){\star}&&(G|_{p})_{v'}&\\
&(G|_{p})_{v''}&&
}
\end{eqnarray}
where $w \in U_p$ and $\mathcal{U}^w$ is 
an open neighborhood of $w$ in $U_p$. By the condition $(**)$ in Definition \ref{defn:top.sheafification}, we can take an open subset $\mathcal{U}^w$ such that the components of  $(a_u)_{u\in U_p}$ with elements of $\mathcal{U}^w$ have a common section $s_{\mathcal{U}^w}\in F|_p(\mathcal{U}^w)$ satisfying $[s_{\mathcal{U}^w}]_v=a_v$ for every $v\in\mathcal{U}^w$. 
 The arrows with $*$ and $\star$ in (\ref{diag:widetilde.D.lambda}) form colimit diagrams of the stalks. By the definition, the map  $\lambda_{\mathcal{U}^w}$ is nothing but the restriction $\lambda_{p|_{\mathcal{U}^w}}$; that is,  for every morphism $\lambda$ of $PSh_{\mathsf{Diff}}^{\mathcal{C}}\mathcal{D}_X$, the family of maps $\{\lambda_p:F(p)\to G(p)\}_{p\in\mathcal{D}_X}$ can be seen as the collection of morphisms $\{\lambda_{U_p}:F|_p(U_p)\to G|_p(U_p)\}_{p\in\mathcal{D}_X}$ with  $\lambda_{U_p}\circ F(f)=G(f)\circ\lambda_{U_q}$ for every morphism $f:p\to q$ of $\mathcal{D}_X$.  The dotted arrows $\lambda_v, \lambda_{v'}$ and $\lambda_{v''}$ are induced by the universalities of $(F|_p)_v,(F|_p)_{v'}$ and  $(F|_p)_{v''}$, respectively. These constructions guarantee the commutativity of the diagram (\ref{diag:widetilde.D.lambda}). The arrows with $*$ send the section $s_{\mathcal{U}^w}$ to the  germ $[s_{\mathcal{U}^w}]_v=a_v\in(F|_p)_v$. For each $p$, we have 
 \begin{center}
 $(\mathfrak{a}_X\lambda)_p(([s_{\mathcal{U}^w}]_v)_{v\in U_p})=(\lambda_v([s_{\mathcal{U}^w}]_v))_{v\in U_p}=([\lambda_{\mathcal{U}^w}(s_{\mathcal{U}^w})]_v)_{v\in U_p}$.
 \end{center}
It is readily seen that the image is in $\mathfrak{a}_XG(p)$. 

To prove the naturality of $\mathfrak{a}_X\lambda$, 
it suffices to show that the diagram 
$$
\xymatrix@C30pt@R15pt{
\mathfrak{a}_XF(q)\ar[r]^{\mathfrak{a}_XF(f)}\ar[d]_{(\mathfrak{a}_X\lambda)_{q}}&\mathfrak{a}_XF(p)\ar[d]^{(\mathfrak{a}_X\lambda)_{p}}\\
\mathfrak{a}_XG(q)\ar[r]_{\mathfrak{a}_XG(f)}&\mathfrak{a}_XG(p)
}
$$
is commutative, where $f:p\to q$ is a morphism of $\mathcal{D}_{X}$. For a given $([s_{\mathcal{U}^w}]_v)_v\in \mathfrak{a}_XF(q)$, we have 
\begin{align*}
(\mathfrak{a}_X\lambda)_p\circ\mathfrak{a}_XF(f)(([s_{\mathcal{U}^w}]_v)_{v\in U_q})&=([\lambda_{f^{-1}(\mathcal{U}^w)}\circ F(f|_{f^{-1}(\mathcal{U}^w)})(s_{\mathcal{U}^w})]_u)_{u\in U_p} \ \ \text{and}\\
\mathfrak{a}_XG(f)\circ(\mathfrak{a}_X\lambda)_q(([s_{\mathcal{U}^w}]_v)_{v\in U_q})&=([G(f|_{f^{-1}(\mathcal{U}^w)})\circ\lambda_{\mathcal{U}^w}(s_{\mathcal{U}^w})]_u)_{u\in U_p}.
\end{align*}
The naturality of $\lambda$ enables us to conclude that the right-hand sides elements coincide with each other. Moreover, the uniqueness of each $\lambda_{v}$ yields the functoriality of $\mathfrak{a}_{X}$. Thus, the assignment $\mathfrak{a}_X$ is a functor to $Sh_{\mathsf{Diff}}^{\mathcal{C}}\mathcal{D}_X$.
\end{proof}

The functor $\mathfrak{a}_{X}$ is indeed the sheafification functor. 

\begin{prop}\label{prop:sheafification}
 The functor $\mathfrak{a}_{X}$ is the left adjoint to the inclusion functor $\mathfrak{i}_{X}:Sh_{\mathsf{Diff}}^{\mathcal{C}}\mathcal{D}_{X}\hookrightarrow PSh_{\mathsf{Diff}}^{\mathcal{C}}\mathcal{D}_{X}$ and possesses the universality of the sheafificaion functor.
 \end{prop}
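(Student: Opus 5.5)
To prove Proposition \ref{prop:sheafification}, I will establish the universal property of $\mathfrak{a}_X$ in the same form as Theorem \ref{thm:univ.sheafification}; the adjunction $\mathfrak{a}_X\dashv\mathfrak{i}_X$ then follows formally. First I define the unit. For $P\in PSh^{\mathcal{C}}_{\mathsf{Diff}}\mathcal{D}_X$ and a plot $p$, set
\[
(g_P)_p:=(g_{U_pP|_p})_{U_p}:P(p)=P|_p(U_p)\to a_{U_p}P|_p(U_p)=\mathfrak{a}_XP(p),\qquad s\mapsto([s]_u)_{u\in U_p}.
\]
To see that this is natural in $p$, take a morphism $f:p\to q$ and $s\in P(q)$. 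Using the formula for $\mathfrak{a}_XP(f)$ from the proof of Proposition \ref{prop:well-def.sheafification} with $\mathcal{V}=U_q$, both composites send $s$ to $([P(f)(s)]_u)_{u\in U_p}$. So $g_P:P\to\mathfrak{i}_X\mathfrak{a}_XP$ is a morphism of presheaves, and the same germ computation shows that it is natural in $P$.

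Next comes existence. Given a sheaf $Q$ on $\mathcal{D}_X$ and a morphism $\kappa:P\to\mathfrak{i}_X Q$, for each plot $p$ the components $\kappa_{p\circ\iota}$, with $\iota:V\hookrightarrow U_p$, form a morphism of classical presheaves $\kappa|_p:P|_p\to Q|_p$ on $U_p$. Since $Q|_p$ is a sheaf by Definition \ref{defn:sheaf}, Theorem \ref{thm:univ.sheafification} gives a unique $\widetilde{\kappa|_p}:a_{U_p}P|_p\to Q|_p$ with $\widetilde{\kappa|_p}\circ g_{U_pP|_p}=\kappa|_p$. I set $\widetilde{\kappa}_p:=(\widetilde{\kappa|_p})_{U_p}$. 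Explicitly, $\widetilde{\kappa}_p$ sends an element that is locally $[\sigma^w]$ on a cover $\{\mathcal{U}^w\}$ to the unique gluing in $Q(p)$ of the sections $\kappa_{p|_{\mathcal{U}^w}}(\sigma^w)$.

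The main obstacle is proving that $\widetilde{\kappa}$ is natural with respect to arbitrary smooth maps $f:p\to q$, not just inclusions, since Theorem \ref{thm:univ.sheafification} only handles each $U_p$ separately. My plan is as follows. Take $a\in\mathfrak{a}_XP(q)$, locally represented by $\sigma^w\in P(q|_{\mathcal{U}^w})$. Then $\mathfrak{a}_XP(f)(a)$ is locally represented on the cover $\{f^{-1}(\mathcal{U}^w)\}$ of $U_p$ by $P(f|)(\sigma^w)$. Now compare $Q(f)(\widetilde{\kappa}_q(a))$ with $\widetilde{\kappa}_p(\mathfrak{a}_XP(f)(a))$ by restricting both to each $f^{-1}(\mathcal{U}^w)$. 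Using the naturality of $Q$ and of $\kappa$ along the restricted morphisms $f|:p|_{f^{-1}(\mathcal{U}^w)}\to q|_{\mathcal{U}^w}$, both restrictions equal $\kappa(P(f|)(\sigma^w))=Q(f|)(\kappa(\sigma^w))$. The locality (separation) axiom for the sheaf $Q|_p$ then forces equality. Here I would use the identity $(\mathfrak{a}_XP)|_p=a_{U_p}P|_p$ established in Proposition \ref{prop:well-def.sheafification}, and I would handle the empty plot separately: it is sent to the terminal object, so there is nothing to check.

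Finally, uniqueness: if $\theta\circ g_P=\kappa$, then for each $p$ the restriction $\theta|_p$ satisfies the factorization in Theorem \ref{thm:univ.sheafification}, so $\theta|_p=\widetilde{\kappa|_p}$ and hence $\theta_p=\widetilde{\kappa}_p$. This yields the bijection $\mathrm{Hom}(\mathfrak{a}_XP,Q)\cong\mathrm{Hom}(P,\mathfrak{i}_XQ)$, $\theta\mapsto\theta\circ g_P$. It is natural in $Q$ trivially and natural in $P$ by the naturality of $g$, which proves $\mathfrak{a}_X\dashv\mathfrak{i}_X$.
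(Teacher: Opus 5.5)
Your proposal is correct and follows essentially the same route as the paper's proof in Appendix B: the unit and the factorization $\widetilde{\kappa}$ are defined plotwise from the classical sheafification on each $U_p$, and uniqueness is inherited plotwise from Theorem~\ref{thm:univ.sheafification}. Your restriction-and-separation argument for the naturality of $\widetilde{\kappa}$ along arbitrary smooth morphisms $f:p\to q$ fills in a step the paper only calls straightforward, and deriving the Hom-bijection formally from the universal property is cleaner than the paper's diagram~(\ref{diag:theta}).
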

 \begin{proof}
 See the proof in Appendix \ref{sect:AppB}.
 \end{proof}

\section{Constructions of the functors $\widetilde{C}$ and $\widetilde{D}$}\label{sect:CD}
We construct functors $\widetilde{C}:Sh_{\mathsf{Top}}^{\mathcal{C}}Open(X)\to Sh_{\mathsf{Diff}}^{\mathcal{C}}\mathcal{D}_{CX}$ and $\widetilde{D}:Sh_{\mathsf{Diff}}^{\mathcal{C}}\mathcal{D}_{X}\to Sh_{\mathsf{Top}}^{\mathcal{C}}Open(DX)$ 
by using $C:\mathsf{Top} \to \mathsf{Diff}$ and $D:\mathsf{Diff} \to \mathsf{Top}$; see Section  
\ref{sect:diff} for the adjunction $(D,C)$ . As mentioned in Introduction, 
there is no natural functor between 
the sites $\mathsf{Diff}$ and $\mathsf{Top}$. 
Therefore, we construct explicitly the functors $\widetilde{C}$ and $\widetilde{D}$ by making use of the sheafification functor, the limit and colimit functors.

\subsection{The construction of $\widetilde{C}$}
\begin{defn}\label{defn:overlineC}
Let $(X,Open(X))$ be a pair of topological space and its topology. We regard $Open(X)$ as the category whose morphisms are inclusions of open subsets of $X$. 
The functor $C:\mathsf{Top}\to\mathsf{Diff}$ yields the category $\mathcal{D}_{CX}$ of plots of $C(X,Open(X))$. Then, we define a functor 
$\overline{C}:Sh_{\mathsf{Top}}^{\mathcal{C}}Open(X)\to PSh_{\mathsf{Diff}}^{\mathcal{C}}\mathcal{D}_{CX}$ by 
$$
\overline{C}\theta_{X}(p)\hspace{5pt}=\hspace{5pt}p^{*}\theta_{X}(U_{p})\hspace{5pt}=\hspace{5pt}\text{colim}_{p(U_{p})\subset V, V\in Open(X)}\hspace{3pt}\theta_{X}(V)
$$
for $p\in \mathcal{D}_{CX}$ and $\theta_X\in Sh_{\mathsf{Top}}^{\mathcal{C}}Open(X)$. 
A morphism $\overline{C}\theta_X(f):\overline{C}\theta_X(q)\to\overline{C}\theta_X(p)$ for a  morphism $f:p\to q$ on $\mathcal{D}_{CX}$ is defined by the universality of $\overline{C}\theta_X(q)$.

For $\lambda:\theta_X\to\theta'_X$ in $Sh_{\mathsf{Top}}^{\mathcal{C}}Open(X)$, a morphism $\overline{C}\lambda:\overline{C}\theta_X\longrightarrow\overline{C}\theta'_X$ is defined 
by 
\begin{center}
$
(\overline{C}\lambda)_p([s_V]_{p(U_p)}):=[\lambda_V(s_V)]_{p(U_p)}
$
\end{center}
for $p \in \mathcal{D}_{CX}$, 
where $\lambda_V:\theta_X(V)\to\theta'_X(V)$ is the morphism induced by $\lambda$. An  element in the colimit is described as the square brackets with the subscript $p(U_p)$. 
\end{defn}

We verify the well-definedness of $\overline{C}$. 
\begin{prop}\label{prop:overlineC}
Let $(X,Open(X))$ be a topological space and $Open(X)$ the category of open subsets of $X$. Then, the functor $\overline{C}:Sh_{\mathsf{Top}}^{\mathcal{C}}Open(X)\to PSh_{\mathsf{Diff}}^{\mathcal{C}}\mathcal{D}_{CX}$ in Definition \ref{defn:overlineC} is well defined.
\end {prop}
\begin{proof}
Let $\theta_X$ be a sheaf in $Sh_{\mathsf{Top}}^{\mathcal{C}}Open(X)$ and $f:p\to q$ a morphism in $\mathcal{D}_{CX}$.  Since $p(U_p)\subset q(U_q)$, it follows that the unique map $f^*_{U_{q}}:\overline{C}\theta_X(q)\to\overline{C}\theta_X(p)$ is induced  by the universality of $\overline{C}\theta_X(q)$. Similarly, for any open sets $W\subset U_q$, we have a map $f^*_W:\overline{C}\theta_X(q|_W)=q^*\theta_X(W)\longrightarrow p^*\theta_X(f^{-1}(W))=\overline{C}\theta_X(p|_{f^{-1}(W)})$ defined by the universality of the colimit.
\[
\xymatrix@C15pt@R8pt{
&p^*\theta_X(f^{-1}(W))&&&X&\\
&&&&&\\
&q^*\theta_X(W)\ar@{.>}[uu]_{f^*_W}&& U_p\ar[rr]^f\ar[ruu]^p&&U_q\ar[luu]_q\\
&&&&&\\
\theta_X(V)\ar[rr]^{\theta_X(V'\hookrightarrow V)}\ar[ruuuu]\ar[ruu]&&\theta_X(V')\ar[luuuu]\ar[luu]&f^{-1}(W)\ar[rr]^-{f|_{f^{-1}(W)}}\ar@{_(->}[uu]&&W\ar@{_(->}[uu]
}
\]
Then, it is readily seen that $\overline{C}\theta_X(f'\circ f)=(f'\circ f)^*=f^*\circ f'^*=\overline{C}\theta_X(f)\circ\overline{C}\theta_X(f')$ for each composite of morphisms $p\xrightarrow{f}q\xrightarrow{f'}r$ and $\overline{C}\theta_X(id_p)=id_{\overline{C}\theta_X(p)}$.
As the same argument above, we see that  $\overline{C}\lambda:\overline{C}\theta_X\to\overline{C}\theta'_X$  satisfies the conditions of
 functor because for each $p$, the map $\overline{C}\lambda_p:\overline{C}\theta_X(p)\to\overline{C}\theta'_X(p)$ is also induced by the universality of $\overline{C}\theta_X(p)$.
\end{proof}

\begin{defn}\label{defn:widetildeC}
A functor $\widetilde{C}:Sh_{\mathsf{Top}}^{\mathcal{C}}Open(X)\to Sh_{\mathsf{Diff}}^{\mathcal{C}}\mathcal{D}_{CX}$ is defined by the composite 
$$
\widetilde{C}\theta_{X}:=\mathfrak{a}_{X}\overline{C}\theta_{X}
$$
of the sheafification functor  $\mathfrak{a}_X$ in Definition  \ref{defn:sheafification} 
and $\overline{C}$.
\end{defn}

\begin{prop}\label{well-def:widetildeC}
The functor $\widetilde{C}:Sh^{\mathcal{C}}_{\mathsf{Top}}Open(X)\to Sh_{\mathsf{Diff}}^{\mathcal{C}}\mathcal{D}_{CX}$ is well defined. 
\end{prop}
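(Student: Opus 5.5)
The plan is to observe that $\widetilde{C}$ is by definition the composite $\mathfrak{a}_{CX}\circ\overline{C}$. Here the sheafification is that of presheaves on $\mathcal{D}_{CX}$, written $\mathfrak{a}_X$ in Definition \ref{defn:widetildeC}. Well-definedness then reduces to facts already established. First, Proposition \ref{prop:overlineC} shows that $\overline{C}:Sh^{\mathcal{C}}_{\mathsf{Top}}Open(X)\to PSh^{\mathcal{C}}_{\mathsf{Diff}}\mathcal{D}_{CX}$ is a functor. Second, Propositions \ref{prop:well-def.sheafification} and \ref{prop:sheafification0}, applied to the diffeological space $C(X,Open(X))$, show that $\mathfrak{a}_{CX}:PSh^{\mathcal{C}}_{\mathsf{Diff}}\mathcal{D}_{CX}\to Sh^{\mathcal{C}}_{\mathsf{Diff}}\mathcal{D}_{CX}$ is a functor landing in sheaves. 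A composite of functors is a functor, so $\widetilde{C}$ is a functor with the stated source and target.

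I would spell out three checks in order. First, on objects: for a sheaf $\theta_X$, the presheaf $\overline{C}\theta_X$ is a genuine functor $\mathcal{D}_{CX}^{op}\to\mathcal{C}$. The key point is that a morphism $f:p\to q$ gives $p(U_p)\subset q(U_q)$, so every open $V\supset q(U_q)$ also contains $p(U_p)$. This yields the comparison map of colimits, and functoriality follows from uniqueness in the universal property. These colimits are filtered, since the open sets containing $p(U_p)$ are closed under finite intersection, so they exist in each of $\mathsf{Sets},\mathsf{Grp},\mathsf{Ab},\mathsf{Vect}$. Then $\mathfrak{a}_{CX}\overline{C}\theta_X$ is a sheaf by Proposition \ref{prop:well-def.sheafification}. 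Second, on morphisms: for $\lambda:\theta_X\to\theta'_X$, the transformation $\widetilde{C}\lambda:=\mathfrak{a}_{CX}(\overline{C}\lambda)$ is natural. This follows from the naturality of $\overline{C}\lambda$ together with the naturality argument in the proof of Proposition \ref{prop:sheafification0}. Third, I would verify $\widetilde{C}(\mathrm{id})=\mathrm{id}$ and $\widetilde{C}(\mu\circ\lambda)=\widetilde{C}\mu\circ\widetilde{C}\lambda$. Both follow because $\overline{C}$ preserves these, by the uniqueness of maps induced on colimits, and $\mathfrak{a}_{CX}$ does too, by the uniqueness of the stalk maps $\lambda_u$.

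The main obstacle I expect is the terminal-object condition on the empty plot required in Definition \ref{defn:sheaf}. For $\mathfrak{e}$ we have $p(U_p)=\emptyset$, so $\overline{C}\theta_X(\mathfrak{e})$ is the colimit over all open $V$. The empty set is initial in this indexing category under $\supset$, so the colimit is $\theta_X(\emptyset)$, which is terminal because $\theta_X$ is a sheaf. Independently, $\mathfrak{a}_{CX}F(\mathfrak{e})=a_\emptyset F|_{\mathfrak{e}}(\emptyset)$ is an empty product, hence terminal. So the condition holds after sheafification in any case, and I would simply cite this.
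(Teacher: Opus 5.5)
Your proposal is correct and matches the paper's proof, which likewise just observes that $\widetilde{C}=\mathfrak{a}\circ\overline{C}$ is a composite of the functors from Propositions \ref{prop:overlineC} and \ref{prop:sheafification0}. Your extra check of the empty-plot condition, which the paper leaves implicit, is a useful addition. One small correction there: $\emptyset$ is \emph{terminal}, not initial, in the indexing category (opens ordered by $\supset$), and that is why the colimit equals $\theta_X(\emptyset)$. The slip is harmless, since your empty-product argument after sheafification already settles the point.
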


\begin{proof}
By the Definition \ref{defn:widetildeC}, Propositions \ref{prop:sheafification0}
 and \ref{prop:overlineC}, we see that  $\widetilde{C}$ is a functor taking values in  $Sh_\mathsf{Diff}^{\mathcal{C}}\mathcal{D}_{CX}$.
\end{proof}

\subsection{Construction of $\widetilde{D}$}

\indent Let $\mathcal{D}_{X}$ be the category of plots on diffeological space $(X,\mathcal{D}_X)$ and $Open(DX)$ the category consisting of $D$-open subsets of $D(X,\mathcal{D}_X)$ as objects and inclusions as morphisms.

\begin{defn}\label{defn:wildetildeD}
A functor $\widetilde{D}:Sh_{\mathsf{Diff}}^{\mathcal{C}}\mathcal{D}_X\to Sh_{\mathsf{Top}}^{\mathcal{C}}Open(DX)$ is defined. 
Let $(X,\mathcal{D}_X)$ be a diffeological space, $F$ an object of $Sh_{\mathsf{Diff}}^{\mathcal{C}}\mathcal{D}_X$ and $f:p\to q$ a morphism in $\mathcal{D}_X$. For any open subsets of $V\in Open(DX)$, we obtain the following commutative diagram 
$$
\xymatrix@C=30pt@R=10pt{
&&X&&\\
&U_{p}\ar[rr]^{f}\ar[ru]^{p}&&U_{q}\ar[lu]_{q}&\\
p^{-1}(V)\ar@{^{(}->}[ru]\ar@/^24pt/[rruu]^{p|_{p^{-1}(V)}}\ar[rrrr]^{f|_{p^{-1}(V)}}&&&&q^{-1}(V).\ar@{_{(}->}[lu]\ar@/_24pt/[lluu]_{q|_{q^{-1}(V)}}
}
$$
Then, it follows that $F(f|_{p^{-1}(V)}):F(q|_{q^-1(V)})=F|_{q}(q^{-1}(V))\longrightarrow F|_{p}(p^{-1}(V))=F(p|_{p^{-1}(V)})$. These arrows compose a family 
$$
\{F(f|_{p^{-1}(V)}):F|_{q}(q^{-1}(V))\longrightarrow F|_{p}(p^{-1}(V))\}_{p,q\in Ob(\mathcal{D}_{X}),f\in Mor(\mathcal{D}_{X})}.
$$
With the family, we define the functor $\widetilde{D}F:Open(DX)\to\mathcal{C}$ by 
\begin{center}
$\widetilde{D}F(V):=\text{lim}_{p\in \mathcal{D}_{X}}\hspace{2pt}F|_{p}(p^{-1}(V))$.
\end{center}
Moreover, for each morphism $\lambda:F\to F'$ of $Sh_{\mathsf{Diff}}^{\mathcal{C}}\mathcal{D}_{X}$ and $V\in D(X,\mathcal{D}_X)$, the universality of $\widetilde{D}F'(V)$ gives a morphism $\widetilde{D}\lambda_V:\widetilde{D}F(V)\to\widetilde{D}F'(V)$ defined by 
\[
\widetilde{D}\lambda_V((s_{pV})_{p\in\mathcal{D}_X})=(\lambda_{{p^{-1}(V)}}(s_{pV}))_{p\in\mathcal{D}_X}
\]
for each $(s_{pV}\in F|_p(p^{-1}(V)))_{p\in\mathcal{D}_X}\in\widetilde{D}F(V)$, where $\lambda_{p^{-1}(V)}$ is a map $F|_p(p^{-1}(V))\to G|_p(p^{-1}(V))$ that is equal to $\lambda_{p|_{p^{-1}(V)}}:F(p|_{p^{-1}(V)})\to G(p|_{p^{-1}(V)})$. The construction above  gives rise to the functor $\widetilde{D}:Sh_{\mathsf{Diff}}^{\mathcal{C}}\mathcal{D}_X\to Sh_{\mathsf{Top}}^{\mathcal{C}}Open(DX)$.
\end{defn}

\begin{rem}
The construction of $\widetilde{D}$ is same with one of the section of a functor described by Akbar Dehghan Nezhad and Alireza Ahmadi; see \cite[Definition 3.4]{AA}.
\end{rem}

\begin{rem}\label{rem.widetildeD}
For $F\in Sh_{\mathsf{Diff}}^{\mathcal{C}}\mathcal{D}_X$, the sheaf $F|_{p}$ over the domain of the plot $p$ sends $V_{i}\hookrightarrow V$ to the restriction map $F|_{p}(p^{-1}(V))\to F|_{p}(p^{-1}(V_{i}))$ and then the universality of the limit induces the unique arrow $\widetilde{D}F(V)\to\widetilde{D}F(V_{i})$. The limit construction enables us to conclude that $\widetilde{D}F$ is a functor, namely, a presheaf. 
In the same way, the functoriality of $\widetilde{D}$ is verified. 
\end{rem}

We need to verify that $\widetilde{D}F$ is a sheaf. 

\begin{prop}\label{prop:well-def of wildetildeD}
If $F$ is a $\mathcal{C}$-valued sheaf on $\mathcal{D}_{X}$, then $\widetilde{D}F$ is a $\mathcal{C}$-valued sheaf on $Open(DX)$.
\end{prop}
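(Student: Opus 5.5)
We verify the sheaf condition for $\widetilde{D}F$ on an arbitrary $D$-open set $V$ with an open cover $\{V_i\}_{i\in I}$ by $D$-open sets. The strategy is to reduce it to the sheaf conditions of the classical sheaves $F|_p$ on the domains $U_p$, and then to commute the resulting equalizers with the limit over $\mathcal{D}_X$.

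First, pulling back along a plot preserves covers. Every plot $p$ is continuous for the $D$-topology, so each $p^{-1}(V_i)$ is open in $U_p$. Hence $\{p^{-1}(V_i)\}_i$ is an open cover of $p^{-1}(V)$, with $p^{-1}(V_i)\cap p^{-1}(V_j)=p^{-1}(V_i\cap V_j)$. Since $F$ is a sheaf in the sense of Definition \ref{defn:sheaf}, $F|_p$ is a sheaf on $Open(U_p)$. Therefore, for every $p$,
$$
F|_p(p^{-1}(V))\longrightarrow \prod_i F|_p(p^{-1}(V_i))\rightrightarrows\prod_{i,j}F|_p(p^{-1}(V_i\cap V_j))
$$
is an equalizer diagram in $\mathcal{C}$.

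Next we check naturality in $p$. For a morphism $f:p\to q$ in $\mathcal{D}_X$ and each $W\in\{V,V_i,V_i\cap V_j\}$, the maps $F(f|_{p^{-1}(W)})$ commute with the restriction maps, by functoriality of $F$ applied to the commutative squares of inclusions and restrictions of $f$ (as in the diagram of Definition \ref{defn:wildetildeD}). So we obtain a diagram of equalizer diagrams indexed by $\mathcal{D}_X^{op}$.

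Then we apply $\lim_{p\in\mathcal{D}_X}$. Limits commute with limits, in particular with equalizers and products, so the limit of the equalizer diagrams is again an equalizer diagram. Its terms are
$$
\lim_p F|_p(p^{-1}(V))=\widetilde{D}F(V),\quad \lim_p\prod_i F|_p(p^{-1}(V_i))\cong\prod_i\widetilde{D}F(V_i),
$$
and similarly for the double overlaps. The one check needed is that the induced maps are exactly the restriction maps of $\widetilde{D}F$ from Remark \ref{rem.widetildeD}. This holds because both are induced by the universal property of the limit from the same componentwise restrictions. It follows that $\widetilde{D}F(V)\to\prod_i\widetilde{D}F(V_i)\rightrightarrows\prod_{i,j}\widetilde{D}F(V_i\cap V_j)$ is an equalizer, which is the sheaf condition.

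For $\mathcal{C}=\mathsf{Sets},\mathsf{Grp},\mathsf{Ab},\mathsf{Vect}$, limits are computed on underlying sets. So one may alternatively argue elementwise: given compatible $(s^i_p)_p\in\widetilde{D}F(V_i)$, glue each $s^i_p$ in $F|_p$ to a unique $s_p$, then check that $(s_p)_p$ is a compatible family using uniqueness of gluing on $U_p$. The main obstacle is this compatibility, equivalently the commutation of the limit with the equalizer. One must check that $F(f|_{p^{-1}(V)})(s_q)$ and $s_p$ agree. Both restrict to $F(f|_{p^{-1}(V_i)})(s^i_q)=s^i_p$ on each $p^{-1}(V_i)$, so they agree by the locality (separatedness) of $F|_p$.

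We also treat the empty case. For $V=\emptyset$, every $p^{-1}(\emptyset)$ is empty, so $F|_p(\emptyset)=F(\mathfrak{e})$ is terminal. A limit of terminal objects is terminal, so $\widetilde{D}F(\emptyset)$ is terminal, as the sheaf condition for the empty cover requires.
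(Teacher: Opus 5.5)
Your proposal is correct and takes essentially the same route as the paper. Like the paper, you pull the cover back along each plot to obtain the equalizer diagrams for the sheaves $F|_p$, note that they are natural in $p$, and pass to $\lim_{p\in\mathcal{D}_X}$ using that limits commute with equalizers and products. The paper checks the equalizer property of the limit row by hand through the universal property, and that step relies on the same separatedness point you isolate in your elementwise argument. Your explicit remarks that plots are $D$-continuous and that $\widetilde{D}F(\emptyset)$ is terminal fill in small points the paper leaves implicit.
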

\begin{proof}
Let $W$ be in $Open(DX)$ and $\{W_{i}\hookrightarrow W\}_i$ a family of morphisms of $Open(DX)$ with $\cup_iW_i =W$. Since $F$ is a $\mathcal{C}$-valued sheaf on a diffeological space $(X, \mathcal{D}_X)$, we have an equalizer 
$$
\xymatrix{
F|_{q}(q^{-1}(W))\ar[r]^(.45){e^{q}}&\prod_{i}\hspace{2pt}F|_{q}(q^{-1}(W_{i}))\ar@<.9ex>[r]^(.45){\pi_{+}^{q}}\ar@<-0.9pt>[r]_(.45){\pi_{-}^{q}}&\prod_{i<j}\hspace{2pt}F|_{q}(q^{-1}(W_{i}\cap W_{j}))
}
$$
 in $\mathcal{C}$ for $q\in \mathcal{D}_{X}$, where $e^{q}$ is constructed by restriction map for $i$. The maps $\pi_{+}^{q} $ and $ \pi_{-}^{q}$ are induced by $W_{i}\cap W_{j}\hookrightarrow W_{i}\hspace{3pt}$ and $\hspace{3pt}W_{i}\cap W_{j}\hookrightarrow W_{j}$ for each $i<j$, respectively. For each $i\in I$ and a morphism  $f:p\to q $ of $\mathcal{D}_X$, we see that the diagram
$$
\xymatrix@C=45pt@R=20pt{
F|_{q}(q^{-1}(W))\ar[r]^{F(f|_{p^{-1}(W)})}\ar[d]&F|_{p}(p^{-1}(W))\ar[d]\\
F|_{q}(q^{-1}(W_{i}))\ar[r]^{F(f|_{p^{-1}(W_{i})})}&F|_{p}(p^{-1}(W_{i}))
}
$$
is commutative. Thus, we have the following commutative diagram $(*)$: 

{\small 
\[
\xymatrix@C=10pt@R=17pt{
\text{lim}_{q\in \mathcal{D}_{X}}F|_{q}(q^{-1}(W))\ar[r]^(.45){\text{lim}\hspace{2pt}e}\ar[d]&\text{lim}_{q\in \mathcal{D}_{X}}\prod_{i}\hspace{2pt}F|_{q}(q^{-1}(W_{i}))\ar@<.9ex>[r]^(.45){\text{lim}\pi_{+}}\ar@<-0.9pt>[r]_(.45){\text{lim}\pi_{-}}\ar[d]^{\star\star}&\text{lim}_{q\in \mathcal{D}_{X}}\prod_{i<j}\hspace{2pt}F|_{q}(q^{-1}(W_{i}\cap W_{j}))\ar[d]\\
F|_{q}(q^{-1}(W))\ar[r]^(.45){e^{q}}\ar[d]&\prod_{i}\hspace{2pt}F|_{q}(q^{-1}(W_{i}))\ar@<.9ex>[r]^(.45){\pi_{+}^{q}}\ar@<-0.9pt>[r]_(.45){\pi_{-}^{q}}\ar[d]^{\star}&\prod_{i<j}\hspace{2pt}F|_{q}(q^{-1}(W_{i}\cap W_{j}))\ar[d]\\
F|_{p}(p^{-1}(W))\ar[r]^(.45){e^{p}}\ar[d]&\prod_{i}\hspace{2pt}F|_{p}(p^{-1}(W_{i}))\ar@<.9ex>[r]^(.45){\pi_{+}^{p}}\ar@<-0.9pt>[r]_(.45){\pi_{-}^{p}}\ar[d]&\prod_{i<j}\hspace{2pt}F|_{p}(p^{-1}(W_{i}\cap W_{j}))\ar[d]\\
&&\hspace{20pt}.
}
\]
}
The second and subsequent rows are equalizers.  
On each column, the vertical arrows consist of limit diagrams. The maps $\text{lim} \hspace{2pt}\pi_{+}$, $\text{lim}\hspace{2pt}\pi_{-}$ and $\text{lim} \hspace{2pt}e$ are induced by the universalities of $\text{lim}_{q\in \mathcal{D}_{X}}\prod_{i<j}\hspace{2pt}F|_{q}(q^{-1}(W_{i}\cap W_{j}))$ and $\text{lim}_{q\in \mathcal{D}_{X}}\prod_{i}\hspace{2pt}F|_{q}(q^{-1}(W_{i}))$, respectively.

In order to varify that the top row is an equalizer diagram, let $A\in\mathcal{C}$ and $\chi:A\to \text{lim}_{q\in \mathcal{D}_{X}}\prod_{i}\hspace{2pt}F|_{q}(q^{-1}(W_{i}))$ be a morphism satisfying $\text{lim}\hspace{2pt}\pi_{+}\circ\chi=\text{lim} \hspace{2pt}\pi_{-}\circ\chi$. This equality yields that there is a map $\chi^{q}:A\to \prod_iF|_q(q^{-1}(W_i))$ such that $\pi_{+}^{q}\circ\chi^{q}=\pi_{-}^{q}\circ\chi^{q}$ for all $q\in\mathcal{D}_{X}$. Each $\chi^{q}$ forms the commutative triangle with the middle vertical arrows; that is, $\chi^q=\star\star\circ\chi$ and $\chi^p=\star\circ\chi^q$ in the diagram $(*)$ above. 
Therefore, by the universality of $F|_{q}(q^{-1}(W))$, there exists the unique arrow $\alpha^{q}:A\to F|_{q}(q^{-1}(W))$ such that  $\chi^{q}=e^{q}\circ\alpha^{q}$ for each $q\in\mathcal{D}_X$. 

The commutativity of the diagram $(*)$ allows us to obtain a family $\{\alpha^{q}\}_{q\in\mathcal{D}_{X}}$ of morphisms which satisfies the condition that 
 $F(f|_{p^{-1}(W)})\circ\alpha^{q}=\alpha^{p}$ for each $f:p\to q$. By the universality of $\text{lim}_{q\in \mathcal{D}_{X}}F|_{q}(q^{-1}(W))$, there is a unique arrow $\alpha:A\to \text{lim}_{q\in \mathcal{D}_{X}}F|_{q}(q^{-1}(W))$ satisfying $\chi=\text{lim}\hspace{2pt}e\hspace{2pt}\circ\alpha$. 
Finally, limit and product are commutative, so we have the result.
\end{proof}

\section{Proofs of main results}\label{sect:Proofs}

We prove Theorem A and Corollary B by considering composites of the limits, colimits and the shafification which define the functor $\widetilde{C}$ and 
$\widetilde{D}$. 
To this end, we introduce two functors $\varepsilon_{X*}$ and $\eta_{X}^{*}$ induced by counit and unit of the adjoint pair $C$ and $D$. Then, we have adjunctions $\widetilde{C}\dashv \varepsilon_{X*}\widetilde{D}$ and $\eta^{*}_{X}\widetilde{C}\dashv\widetilde{D}$. 

\subsection{The first main theorem}\label{sect:Thm_A_1}
We begin by recalling the unit and counit of the adjunction $D:\mathsf{Diff}\leftrightarrows\mathsf{Top}:C$ more precisely.
 Let $(X,\mathcal{D}_{X})$ be a diffeological space and $\eta$ the unit associated with the adjunction $(C, D)$. The smooth map $$\eta_{X}:=\eta_{(X,\mathcal{D}_{X})}:(X,\mathcal{D}_{X})\to CD(X,\mathcal{D}_{X})$$ 
 for a diffeological space $(X,\mathcal{D}_{X})$ is the identity map on the underlying set $X$.

 \begin{defn}\label{defn:eta^*}
 (The functor $\eta_{X}^{*}$) 
 Let $\mathcal{D}_X$ and $\mathcal{D}_{CDX}$ denote the categories of plots of $(X,\mathcal{D}_X)$ and $CD(X,\mathcal{D}_X)$,  respectively. Then, the map $\eta_X$ induces the functor $\eta^{*}_{X}:Sh_{\mathsf{Diff}}^{\mathcal{C}}\mathcal{D}_{CDX}\to Sh_{\mathsf{Diff}}^{\mathcal{C}}\mathcal{D}_{X}$ defined by
\begin{center}
$\eta_{X}^{*}F(p):=F(\eta_{X}\circ p),\hspace{5pt}(\eta_{X}^{*}\lambda)_{p}:=\lambda_{\eta_{X}\circ p}:F(\eta_{X}\circ p)\to G(\eta_{X}\circ p)$
\end{center}
for each $p\in\mathcal{D}_{X}$, $F\in Sh_{\mathsf{Diff}}^{\mathcal{C}}\mathcal{D}_{CDX}$ and morphism $\lambda:F\to G$ in $Sh_{\mathsf{Diff}}^{\mathcal{C}}\mathcal{D}_{CDX}$. It is easily seen that $\eta_{X}^{*}F$ is a sheaf. 
\end{defn}

The following is one of the main results in this manuscript.
\begin{thm}\label{thm:main,eta.ver.}
 \text{\rm (The adjunction $\eta_{X}^{*}\widetilde{C}\dashv\widetilde{D}$)} 
Let $\mathcal{D}_{X}$ be the category of plots on a diffeological space $(X,\mathcal{D}_X)$ and $\eta_{X}^{*}$ the functor defined in Definition \ref{defn:eta^*}. Then, the composite $\eta_{X}^{*}\widetilde{C}$ is the left adjoint functor to $\widetilde{D}$.
$$
\xymatrix@C=8pt@R=15pt{
Sh_{\mathsf{Diff}}^{\mathcal{C}}\mathcal{D}_{X}\ar[rr]^(.45){\widetilde{D}}&&Sh_{\mathsf{Top}}^{\mathcal{C}}Open(DX)\ar[ld]^{\widetilde{C}}\\
&Sh_{\mathsf{Diff}}^{\mathcal{C}}\mathcal{D}_{CDX}\ar[lu]^{\eta^{*}_{X}}\ar@{}[u]|{\top}&
}
$$
\end{thm}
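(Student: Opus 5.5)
I would establish the adjunction by exhibiting a natural bijection
\[
\mathrm{Hom}_{Sh_{\mathsf{Diff}}^{\mathcal{C}}\mathcal{D}_X}(\eta_X^*\widetilde{C}\theta,F)\cong \mathrm{Hom}_{Sh_{\mathsf{Top}}^{\mathcal{C}}Open(DX)}(\theta,\widetilde{D}F)
\]
for $\theta\in Sh_{\mathsf{Top}}^{\mathcal{C}}Open(DX)$ and $F\in Sh_{\mathsf{Diff}}^{\mathcal{C}}\mathcal{D}_X$. The argument proceeds in two reductions followed by a direct comparison of colimits and limits.

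\textbf{Step 1: remove the sheafification.} Since $\eta_X$ is the identity on underlying sets, every plot $p$ of $X$ is also a plot $\eta_X\circ p$ of $CDX$. Moreover, $(\eta_X^*G)|_p=G|_{\eta_X\circ p}$ as presheaves on $U_p$. Because $\mathfrak{a}$ is defined plotwise by $a_{U_p}$, this gives $\eta_X^*\mathfrak{a}_{DX}\overline{C}\theta=\mathfrak{a}_X\eta_X^*\overline{C}\theta$, where $\eta_X^*$ is applied at the presheaf level. I would check this directly from Definition \ref{defn:sheafification}, using the identification $a_VF|_{p\circ\iota}(V)=a_{U_p}F|_p(V)$ from the proof of Proposition \ref{prop:well-def.sheafification}. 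Proposition \ref{prop:sheafification} then reduces the left-hand side to presheaf morphisms $\eta_X^*\overline{C}\theta\to F$.

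\textbf{Step 2: unwind both sides.} A morphism $\lambda:\eta_X^*\overline{C}\theta\to F$ is a family of maps
\[
\lambda_p:\operatorname{colim}_{p(U_p)\subset V\in Open(DX)}\theta(V)\to F(p),
\]
natural in $p\in\mathcal{D}_X$. By the universal property of the colimit, this is the same as a family $\lambda_{p,V}:\theta(V)\to F(p)$, indexed by $p$ and open $V\supset p(U_p)$, compatible with restrictions in $V$ and with morphisms $f:p\to q$.

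A morphism $\mu:\theta\to\widetilde{D}F$ is, by the universal property of the limit, a family $\mu_{V,p}:\theta(V)\to F(p|_{p^{-1}(V)})$, natural in $V$ and compatible with morphisms $f$ in $\mathcal{D}_X$.

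\textbf{Step 3: define the two directions.}
\begin{itemize}
\item[(i)] Given $\lambda$, set $\mu_{V,p}:=\lambda_{p|_{p^{-1}(V)},V}$. This is legitimate because $p^{-1}(V)$ is open in $U_p$: $V$ is $D$-open and plots are $D$-continuous.
\item[(ii)] Given $\mu$, for $p(U_p)\subset V$ we have $p^{-1}(V)=U_p$, so set $\lambda_{p,V}:=\mu_{V,p}$.
\end{itemize}

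The composite (ii)$\circ$(i) is visibly the identity. For (i)$\circ$(ii), apply naturality of $\lambda$ to the inclusion morphism $p|_{p^{-1}(V)}\to p$. This yields $\lambda_{p,V'}=\lambda_{p|_{p^{-1}(V)},V}$ restricted along $\theta(V'\to V)$, and the compatibility with restriction in $V$ handles the rest. Naturality of the bijection in $\theta$ and $F$ is formal, since both directions are given by precomposition and postcomposition of components.

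\textbf{Main obstacle.} I expect Step 1 to be the main obstacle: verifying that $\eta_X^*$ commutes with sheafification, and that the unit $g$ of Proposition \ref{prop:sheafification} is compatible with $\eta_X^*$. This needs care because plots of $CDX$ that do not factor through $\mathcal{D}_X$ are simply discarded. I would first try to prove it stalkwise, using that the stalk of $(\eta_X^*G)|_p$ at $u$ is literally the stalk of $G|_{\eta_X\circ p}$ at $u$. A secondary point is that the plotwise colimit defining $\overline{C}$ is indexed over $D$-open sets of $DX$, not of $DCDX$. Here I would invoke $DCD=D$ from Remark \ref{rem:C-D}, so that $Open(DCDX)=Open(DX)$ and the indexing categories agree.
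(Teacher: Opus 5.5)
\textbf{Verdict.} Your argument is correct, and it takes a genuinely different route from the paper's.

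\textbf{Comparison with the paper.} The paper writes down an explicit counit $\alpha\colon\eta_X^*\widetilde{C}\widetilde{D}\Rightarrow\mathrm{id}$ and unit $\beta\colon\mathrm{id}\Rightarrow\widetilde{D}\eta_X^*\widetilde{C}$ at the level of germs. The map $\alpha$ projects a germ family to its $p$-component, picks local representatives, and glues them using the sheaf property of $F|_p$; this is what forces the independence-of-choice arguments in Lemmas \ref{lem:compatible} and \ref{lem:well-def of alpha}. The map $\beta$ sends $s_W$ to its family of germs. The two triangle identities are then checked by element chasing.

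You instead factor the adjunction into two pieces. The first is the sheafification adjunction of Proposition \ref{prop:sheafification}. The second is a bijection $\mathrm{Hom}_{PSh}(\eta_X^*\overline{C}\theta,F)\cong\mathrm{Hom}(\theta,\widetilde{D}F)$. That bijection uses only the universal properties of the colimit in $\overline{C}$ and the limit in $\widetilde{D}$, together with the fact that $p^{-1}(V)=U_p$ whenever $p(U_p)\subset V$.

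What your route buys:
\begin{itemize}
\item All the gluing is isolated in one lemma that is already proved.
\item Choices of representatives never arise.
\item The argument is uniform in $\mathcal{C}$ once the sheafification adjunction is available.
\item It shows why $\eta_X^*$ appears: it cuts the plots of $CDX$ down to those of $X$, so the indexing matches the limit defining $\widetilde{D}$.
\end{itemize}
Transposing the two identity morphisms through your bijection gives exactly the paper's $\beta$ (germs of $s_V$ on $p^{-1}(V)$) and $\alpha$ (project to the $p$-component, then glue). The paper's route therefore buys only explicit formulas up front, at the cost of considerable bookkeeping.

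\textbf{Smaller points to correct.}
\begin{itemize}
\item Step 1 is not really an obstacle. Since $(\eta_X^*G)|_p=G|_{\eta_X\circ p}$ as presheaves on $U_p$, and the morphism maps of the sheafification are given by the same stalk formulas, you get $\eta_X^*\mathfrak{a}_{CDX}=\mathfrak{a}_X\eta_X^*$ on the nose. Note that you wrote $\mathfrak{a}_{DX}$ where $\mathfrak{a}_{CDX}$ is meant.
\item The appeal to $DCD=D$ is unnecessary. By Definition \ref{defn:overlineC}, $\overline{C}$ applied to a sheaf on the topological space $DX$ already takes its colimit over $Open(DX)$.
\item The identity you display in the check of (i)$\circ$(ii) does not match up. What is needed there is only $\mu_{V,p|_{p^{-1}(V)}}=\mu_{V,p}$. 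This is the cone condition for $\mu_V$ along the inclusion $p|_{p^{-1}(V)}\hookrightarrow p$, whose induced map on $F(p|_{p^{-1}(V)})$ is the identity.
\item The argument you sketched there (naturality of $\lambda$ along inclusions plus compatibility with the colimit) proves something else: that the $\mu$ produced by (i) is natural in $V$. State that check explicitly. Also state the analogous check that (ii) produces a morphism of presheaves on $\mathcal{D}_X$. Both are routine.
\end{itemize}
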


In order to prove the theorem, 
we construct natural transformations $\alpha:\eta_{X}^{*}\widetilde{C}\widetilde{D}\to id_{Sh_{\mathsf{Diff}}^{\mathcal{C}}\mathcal{D}_{X}}$ and $\beta: id_{Sh_{\mathsf{Top}}^{\mathcal{C}}Open(DX)}\to \widetilde{D}(\eta_{X}^{*}\widetilde{C})$. Moreover, we verify whether $\eta_X^*\widetilde{C}$ and $\widetilde{D}$ together with $\alpha$ and $\beta$ satisfy the triangle identities; that is, the following triangle diagrams 
$$
 \xymatrix@C=35pt@R=30pt
 {
\eta_{X}^{*}\widetilde{C}F(p)\ar[r]^(.4){\eta^{*}_{X}\widetilde{C}\beta_{F}(p)}\ar[rd]_{id_{\eta^{*}_{X}\widetilde{C}F}(p)}&(\eta^{*}_{X}\widetilde{C})\widetilde{D}(\eta^{*}_{X}\widetilde{C})F(p)\ar[d]^{\alpha_{\eta^{*}_{X}\widetilde{C}F}(p)} &\widetilde{D}G(W)\ar[r]^(.4){\beta_{\widetilde{D}G}(W)}\ar[rd]_{id_{\widetilde{D}G}(W)}&\widetilde{D}(\eta^{*}_{X}\widetilde{C})\widetilde{D}G(W)\ar[d]^{\widetilde{D}\alpha_{G}(W)}\\
\ar@{}[ru]|(.65){\circlearrowright}&\eta^{*}_{X}\widetilde{C}F(p), &\ar@{}[ru]|(.65){\circlearrowright}&\widetilde{D}G(W)
}
$$
are commutative for $F\in Sh_{\mathsf{Top}}^{\mathcal{C}}Open(DX)$, $G\in Sh_{\mathsf{Diff}}^{\mathcal{C}}\mathcal{D}_{X}$, $p\in \mathcal{D}_{X}$ and $W\in Open(DX)$. 
\begin{const}\label{const:alpha}
 (The natural transformation $\alpha$) 
Fix $F\in Sh_{\mathsf{Diff}}^{\mathcal{C}}\mathcal{D}_{X}$ and $p\in\mathcal{D}_{X}$. According to the definitions of $\eta_X^*$, $\widetilde{C}$ and $\widetilde{D}$, we obtain
\begin{align*}
(\eta_{X}^{*}\widetilde{C}\widetilde{D})F(p)=\widetilde{C}\widetilde{D}F(\eta_{X}p)&= a_{U_{\eta_{X}p}}(\eta_{X}p)^{*}\widetilde{D}F(U_{\eta_{X}p})\\
&= \{(a_{u})_{u\in U_{\eta_{X} p}}\in\prod_{u\in U_{\eta_{X}p}}((\eta_{X}p)^{*}\widetilde{D}F)_{u}\hspace{3pt}|\hspace{3pt}(**)\hspace{3pt}\},
\end{align*}
where $(**)$ denotes the condition given by the definition of the sheafification functor $a_{U_{\eta_{X}p}}$; see the condition $(**)$ of Definition \ref{defn:top.sheafification}.
Here $((\eta_{X}p)^{*}\widetilde{D}F)_{v}$ is the stalk of $(\eta_{X}p)^{*}\widetilde{D}F$ at $u$; that is, $$((\eta_{X}p)^{*}\widetilde{D}F)_{u}=\text{colim}_{u\in W}\hspace{3pt}\text{colim}_{\eta_{X}p(W)\subset V}\hspace{3pt}\text{lim}_{q\in \mathcal{D}_X}F|_{q}(q^{-1}(V))
.$$
Moreover, the element $a_u$ denotes the germ at $u$ of an element in $(\eta_{X}p)^{*}\widetilde{D}F(U_{\eta_{X}p})$ for some domain $U_{\eta_{X}p}$ of the plot $\eta_Xp$. 
 Let $[-]_{\eta_Xp(W)}$ 
 and $[-]_v$ denote the classes in the first and second colimits, respectively. Then, the germ $a_{v}$ has an element $(s_{q}\in F|_{q}(q^{-1}(V)))_{q\in \mathcal{D}_{X}}\in\widetilde{D}F(V)$ as a representative. Thus, we have
\[
a_{v}=[[(s_{q})_{q\in \mathcal{D}_{X}}]_{\eta_{X}p(W)}]_v,
\]
\noindent where $(s_q)_{q\in\mathcal{D}_X}$ is an element in $\text{lim}_{q\in\mathcal{D}_X}F|_q(q^{-1}(V)).$ \\

In order to construct the natural transformation $\alpha$ mentioned above, we define a morphism $\alpha_{F(p)}:\eta_{X}^{*}\widetilde{C}\widetilde{D}F(p)\to F(p)$. 
We take an element $$([[\hspace{2pt}s_{p}\in F|_{p}(p^{-1}(V))]_{\eta_{X}p(W)}]_{u})_{u\in U_{\eta_Xp}}$$ indexed by $p$ from $([[(s_{q})_{q\in \mathcal{D}_{X}}]_{\eta_{X}p(W)}]_{u})_{u\in U_{\eta_Xp}}\in\eta_{X}^{*}\widetilde{C}\widetilde{D}F(p)$. Every component indexed by each element of $U_{\eta_Xp}$ satisfies the condition $(**)$. Therefore, for $w\in U_{\eta_{X} p}$, there exists an open neighborhood $\mathcal{U}^{w}$ and $\widetilde{\sigma}^{w}_{p}\in(\eta_{X} p)^{*}\widetilde{D}F(\mathcal{U}^{w})$ such that  $[\widetilde{\sigma}^{w}_{p}]_{v}=[[s_{p}\in F|_{p}(p^{-1}(V))]_{\eta_{X} p(W)}]_{v}$ for each $v\in\mathcal{U}^w$. Then, we have a $D$-open subset $\mathcal{V}$ satisfying $\eta_Xp(\mathcal{U}^w)\subset\mathcal{V}$ and $\sigma'^w_p\in F|_p(p^{-1}(\mathcal{V}))$ with $[\sigma'^w_p]_{\eta_Xp(\mathcal{U}^w)}=\widetilde{\sigma}_p^w$. This enables us to obtain a family $\{\sigma^{w}_{p}\in F|_{p}(\mathcal{U}^{w})\}_{w\in U_p}$, where $\sigma^{w}_{p}$ is defined by the restricted section $\sigma^{w}_{p}:=\sigma'^{w}_{p}|_{\mathcal{U}^{w}}$. We see that $ \mathcal{U}^{w}\subset p^{-1}\eta_X^{-1}(\mathcal{V})$ and moreover,  $\mathcal{U}^{w}\subset p^{-1}(\mathcal{V})$ because of $U_{\eta_Xp}=U_p$. Observe that $\eta_X$ is the identity map on the underlying set $X$. 

Since $F|_{p}$ is a sheaf on $U_p$, it follows that there exists a unique section $\Sigma_{p}\in F|_p(U_p)$ such that $\Sigma_{p}|_{\mathcal{U}^w}=\sigma^{w}_{p}$ for $w\in U_{\eta_{X} p}(=U_p)$. We define $\alpha_{F(p)}:\eta_{X}^{*}\widetilde{C}\widetilde{D}F(p)\to F(p)$ by
\begin{eqnarray}\label{def:alpha}
\alpha_{F(p)}(([[(s_{q})_{q\in \mathcal{D}_{X}}]_{\eta_{X} q(W)}]_{u})_{u\in U_{\eta_{X} p}})=\Sigma_{p}. 
\end{eqnarray}
We confirm the naturality of $\alpha$ in Lemma \ref{lem:well-def of alpha} .\\
\end{const}
In order to prove Theorem \ref{thm:main,eta.ver.}, we need lemmas.

\begin{lem}\label{lem:compatible}
The family $\{\sigma^{w}_{p}\in F|_{p}(\mathcal{U}^{w})\}_{w\in U_{\eta_{X}p}}$ defined in Construction \ref{const:alpha} is a compatible family. 
\end{lem}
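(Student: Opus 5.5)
To prove Lemma \ref{lem:compatible}, I will show that for any $w, w'\in U_{\eta_Xp}=U_p$ the restrictions of $\sigma^w_p$ and $\sigma^{w'}_p$ to $\mathcal{U}^w\cap\mathcal{U}^{w'}$ coincide in $F|_p(\mathcal{U}^w\cap\mathcal{U}^{w'})$. The strategy is to compare germs pointwise and then use the sheaf property of $F|_p$ on $U_p$ (Definition \ref{defn:sheaf}) to pass from germs back to sections.

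\textbf{Step 1: the two sections have the same germs.} Fix $v\in\mathcal{U}^w\cap\mathcal{U}^{w'}$. By Construction \ref{const:alpha}, $[\widetilde{\sigma}^w_p]_v=a_v=[\widetilde{\sigma}^{w'}_p]_v$ in the stalk $((\eta_Xp)^*\widetilde{D}F)_v$. Unwinding the two colimits, this equality means the following. There are an open $v\in O\subset\mathcal{U}^w\cap\mathcal{U}^{w'}$ and a $D$-open $\mathcal{V}''$ with $\eta_Xp(O)\subset\mathcal{V}''\subset\mathcal{V}\cap\mathcal{V}'$ such that the images of $\sigma'^w_p$ and $\sigma'^{w'}_p$ agree. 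More precisely, the full limit elements $(\sigma'^w_q)_q\in\widetilde{D}F(\mathcal{V})$ and $(\sigma'^{w'}_q)_q\in\widetilde{D}F(\mathcal{V}')$ representing $\widetilde{\sigma}^w_p$ and $\widetilde{\sigma}^{w'}_p$ restrict to the same element of $\widetilde{D}F(\mathcal{V}'')$. I will use that the representatives in Construction \ref{const:alpha} are the $p$-components of elements of $\widetilde{D}F(\mathcal{V})$, which is implicit in the notation $\sigma'^w_p$.

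\textbf{Step 2: project to the $p$-component.} The limit projection $\widetilde{D}F(\mathcal{V}'')\to F|_p(p^{-1}(\mathcal{V}''))$ is compatible with the restriction maps of $\widetilde{D}F$ (Remark \ref{rem.widetildeD}). Applying it, I get $\sigma'^w_p|_{p^{-1}(\mathcal{V}'')}=\sigma'^{w'}_p|_{p^{-1}(\mathcal{V}'')}$. Since $\eta_X$ is the identity on the underlying set, $O\subset p^{-1}(\mathcal{V}'')$. Restricting further to $O$ gives $\sigma^w_p|_O=\sigma^{w'}_p|_O$.

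\textbf{Step 3: glue the local equalities.} The open sets $O=O_v$ for $v\in\mathcal{U}^w\cap\mathcal{U}^{w'}$ cover $\mathcal{U}^w\cap\mathcal{U}^{w'}$. By the locality (identity) part of the sheaf condition for $F|_p$, the two restrictions $\sigma^w_p|_{\mathcal{U}^w\cap\mathcal{U}^{w'}}$ and $\sigma^{w'}_p|_{\mathcal{U}^w\cap\mathcal{U}^{w'}}$ are equal, which is compatibility.

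The main obstacle is Step 1: turning equality in the iterated colimit $\mathrm{colim}_{v\in W}\mathrm{colim}_{\eta_Xp(W)\subset V}$ into a single pair $(O,\mathcal{V}'')$ on which the representatives agree. This works for filtered colimits in $\mathsf{Sets}$, $\mathsf{Grp}$, $\mathsf{Ab}$ and $\mathsf{Vect}$. Both index categories are directed under intersection: open neighbourhoods of $v$ in the first, and $D$-open sets containing $\eta_Xp(W)$ in the second. So equality of classes is witnessed at some common later stage, and I will combine the two stages by shrinking $W$ to $O$ and $V$ to $\mathcal{V}''$. A secondary point to check is that choosing a different representative $\sigma'^w_p$ of $\widetilde{\sigma}^w_p$ does not matter. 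Step 2 with $w=w'$ handles this, since any two representatives agree after restriction near each point.
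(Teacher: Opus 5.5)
Your proof is correct and follows the same route as the paper. Equality of germs at each $x\in\mathcal{U}^w\cap\mathcal{U}^{w'}$ gives a neighbourhood $\mathcal{O}_x$ on which $\sigma^w_p$ and $\sigma^{w'}_p$ agree, and the locality axiom for $F|_p$ on the cover $\{\mathcal{O}_x\}_x$ finishes the argument. Your Steps 1--2 make explicit what the paper compresses into ``$[\sigma^w_p]_x=a_x=[\sigma^{w'}_p]_x$'': since $a_x$ lives in the stalk of $(\eta_Xp)^*\widetilde{D}F$, one has to unwind the two filtered colimits and project the limit to its $p$-component to get an equality in $F|_p$.
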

\begin{proof}
Let $w $ and $ w'$ be elements of $U_{\eta_{X}p}$ satisfying $\mathcal{U}^{w}\cap\hspace{2pt}\mathcal{U}^{w'}\neq\phi$. To varify $\sigma_{p}^{w}|_{\mathcal{U}^{w}\cap\hspace{2pt}\mathcal{U}^{w'}}=\sigma_{p}^{w'}|_{\mathcal{U}^{w}\cap\hspace{2pt}\mathcal{U}^{w'}}$ for $w $ and $ w'$, we take an element $x\in\mathcal{U}^{w}\cap\hspace{2pt}\mathcal{U}^{w'}$.  By the construction of the family, we see that  $[\sigma_{p}^{w}]_{x}=a_{x}=[\sigma_{p}^{w'}]_{x}$, where $a_{x}$ is the same germ as in Construction \ref{const:alpha}. Then, there exists an open neighborhood $\mathcal{O}_{x}$ of $x$ such that  $\sigma_{p}^{w}|_{\mathcal{O}_{x}}=\sigma_{p}^{w'}|_{\mathcal{O}_{x}}$. We see  that the family $\{\mathcal{O}_{x}\}_{x}$ covers $\mathcal{U}^{w}\cap\hspace{2pt}\mathcal{U}^{w'}$. Thus, the equality  $\sigma_{p}^{w}|_{\mathcal{U}^{w}\cap\hspace{2pt}\mathcal{U}^{w'}}=\sigma_{p}^{w'}|_{\mathcal{U}^{w}\cap\hspace{2pt}\mathcal{U}^{w'}}$ is satisfied.
\end{proof}

\begin{lem}\label{lem:well-def of alpha}
The map $\alpha:\eta_{X}^{*}\widetilde{C}\widetilde{D}\longrightarrow id$ in Construction \ref{const:alpha} is a well-defined natural transformation.
\end{lem}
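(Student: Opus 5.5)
We must show three things in turn: that $\alpha_{F(p)}$ is independent of the choices made in Construction \ref{const:alpha}, that it is a morphism in $\mathcal{C}$, and that it is natural both in the plot $p$ and in the sheaf $F$.

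\emph{Independence of choices.} The element $\Sigma_p$ depends on the neighborhoods $\mathcal{U}^w$, the $D$-open sets $\mathcal{V}$, and the representatives $\sigma'^w_p$. By Lemma \ref{lem:compatible} the family $\{\sigma^w_p\}_w$ is compatible, so the sheaf property of $F|_p$ on $U_p$ yields $\Sigma_p$. Suppose a second set of choices gives $\{\tau^{w}_p\in F|_p(\mathcal{U}'^{w})\}$. At each $x\in U_p$, the germs of $\sigma^w_p$ and $\tau^{w'}_p$ both equal the $p$-component of the germ $a_x$; this is the argument of Lemma \ref{lem:compatible}. Hence they agree on a neighborhood $\mathcal{O}_x$. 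Gluing the two families over the common refinement $\{\mathcal{U}^w\cap\mathcal{U}'^{w'}\cap\mathcal{O}_x\}$ and invoking uniqueness in the sheaf axiom gives the same $\Sigma_p$.

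The real subtlety here is the meaning of ``the $p$-component of a germ.'' We must check that the projection
\[
\operatorname{lim}_{q}F|_q(q^{-1}(V))\to F|_p(p^{-1}(V))
\]
is compatible with the colimits over $V\supset \eta_Xp(W)$ and over $W\ni u$. This gives a well-defined map from the stalk $((\eta_Xp)^*\widetilde{D}F)_u$ to the stalk $(F|_p)_u$. The map exists because restriction in $\widetilde{D}F$ is induced componentwise from the restrictions in $F|_p$, and because $p^{-1}(V)\supset W$ whenever $\eta_Xp(W)\subset V$, using $U_{\eta_Xp}=U_p$. With this stalk map in hand, $\alpha_{F(p)}$ is simply the composite of the induced map $a_{U_p}(\ldots)(U_p)\to a_{U_p}F|_p(U_p)$ with the inverse of the canonical isomorphism $F|_p(U_p)\cong a_{U_p}F|_p(U_p)$, which holds because $F|_p$ is already a sheaf (Theorem \ref{thm:univ.sheafification}). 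This reformulation shows at once that $\alpha_{F(p)}$ is a morphism in $\mathcal{C}$, that is, a homomorphism when $\mathcal{C}$ is $\mathsf{Grp}$, $\mathsf{Ab}$ or $\mathsf{Vect}$.

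\emph{Naturality in $p$.} For $f:p\to q$ in $\mathcal{D}_X$ we must show $F(f)\circ\alpha_{F(q)}=\alpha_{F(p)}\circ\eta_X^*\widetilde{C}\widetilde{D}F(f)$. The map $\eta_X^*\widetilde{C}\widetilde{D}F(f)$ sends germs at $v$ to germs at $u$ with $f(u)=v$, as in the proof of Proposition \ref{prop:well-def.sheafification}. The cone condition on the limit says that $s_p=F(f|_{p^{-1}(V)})(s_q)$. So locally both sides are $F(f|)(\sigma^{w}_q)$ restricted to $f^{-1}(\mathcal{U}^w)$. The images of these local pieces agree on the cover $\{f^{-1}(\mathcal{U}^w)\}$ of $U_p$, and uniqueness of gluing in $F|_p$ gives equality.

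\emph{Naturality in $F$.} For $\lambda:F\to F'$, the morphism $\widetilde{D}\lambda$ acts componentwise (Definition \ref{defn:wildetildeD}), and both $\widetilde{C}$ and $\eta_X^*$ act through stalk maps. Hence both composites send the local sections $\sigma^w_p$ to $\lambda(\sigma^w_p)$. Since $\lambda_p$ commutes with restrictions, $\lambda_p(\Sigma_p)$ glues the family $\{\lambda(\sigma^w_p)\}$, and uniqueness of gluing again gives equality.

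The main obstacle is the independence of choices together with the stalk-level projection. Everything else reduces to repeated use of uniqueness in the sheaf axiom for $F|_p$.
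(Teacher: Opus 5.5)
Your proposal is correct. Its two gluing steps are essentially the paper's proof: independence of the chosen local sections, and the naturality square for $\lambda\colon F\to G$. In each, you compare restrictions to the $\mathcal{U}^w$ and invoke uniqueness of gluing in $F|_p$. The paper reaches the $\lambda$-square by first identifying $\eta_X^*\widetilde{C}\widetilde{D}\lambda_p$ with $\prod_{w}(\eta_Xp)^*\lambda_w$ and then computing elementwise via diagram (\ref{diag:lambda}).

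You go beyond the paper in three places.

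\emph{The presheaf map.} You make explicit the map $(\eta_Xp)^*\widetilde{D}F\to F|_p$ of presheaves on $U_p$: project $\lim_q F|_q(q^{-1}(V))$ to its $p$-component, then restrict to $W\subset p^{-1}(V)$. The paper uses this map only tacitly, when it writes $[\sigma^w_p]_x=a_x$; strictly speaking, those two germs lie in different stalks.

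\emph{Choice-free definition.} You describe $\alpha_{F(p)}$ as $a_{U_p}$ applied to this map, followed by the inverse of the canonical isomorphism $F|_p(U_p)\cong a_{U_p}F|_p(U_p)$. This description involves no choices, so your gluing-based independence argument becomes redundant. It also shows at once that $\alpha_{F(p)}$ is a homomorphism when $\mathcal{C}$ is $\mathsf{Grp}$, $\mathsf{Ab}$ or $\mathsf{Vect}$, which the paper never addresses. Since the presheaf map is natural in $F$, naturality in $F$ could likewise be read off from functoriality of $a_{U_p}$.

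\emph{Naturality in the plot.} You check naturality in $p$ using the cone condition $s_p=F(f|_{p^{-1}(V)})(s_q)$. The paper omits this check, although it is needed for $\alpha_F$ to be a morphism in $Sh_{\mathsf{Diff}}^{\mathcal{C}}\mathcal{D}_X$ at all.

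In short, the paper's argument is a direct elementwise verification of the one square it states. Yours is more complete and, through the universal property of sheafification, more conceptual.
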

\begin{proof}
Let $F$ be an object of $Sh_{\mathsf{Diff}}^{\mathcal{C}}\mathcal{D}_{X}$ and $p$ a plot in  $\mathcal{D}_{X}$. Let $(a_{u})_{u\in U_{\eta_{X} p}}$ be an element of $\eta_{X}^{*}\widetilde{C}\widetilde{D}F(p)$. 
First, we check the well-definedness of $\alpha_{F(p)}$. Suppose that we obtain two different compatibile families $\{\sigma_{p}^{w}\in F|_{p}(\mathcal{U}^{w})\}_{w}$ and $\{\mu_{p}^{w}\in F|_{p}(\mathcal{V}^{w})\}_{w}$ from $(a_{u})_{u}$. The same argument as in the proof of Lemma \ref{lem:compatible} enables us to deduce that  $\sigma_{p}^{w}|_{\mathcal{U}^{w}\cap\hspace{2pt}\mathcal{V}^{w}}=\mu_{p}^{w}|_{\mathcal{U}^{w}\cap\hspace{2pt}\mathcal{V}^{w}}$. Then, $\{\sigma_{p}^{w}\in F|_{p}(\mathcal{U}^{w})\}_{w}\cup\{\mu_{p}^{w}\in F|_{p}(\mathcal{V}^{w})\}_{w}$ is a compatible family. Since $F|_{p}$ is a sheaf, it follows that this compatible family induces the unique section $\Sigma_{p}\in F|_{p}(U_{p})$ satisfying $\Sigma_{p}|_{\mathcal{U}^{w}}=\sigma_{p}^{w}$ and $\Sigma_{p}|_{\mathcal{V}^{w}}=\mu_{p}^{w}$. Therefore, the map $\alpha_{F(p)}$ is well defined.

We show that the diagram 
\begin{eqnarray}\label{diag:nat.alpha}
\xymatrix@C=30pt@R=20pt{
\eta_{X}^{*}\widetilde{C}\widetilde{D}F(p)\ar[r]^{\alpha_{F(p)}}\ar[d]_{\eta_{X}^{*}\widetilde{C}\widetilde{D}\lambda_{p}}&F(p)\ar[d]^{\lambda_{p}}\\
\eta_{X}^{*}\widetilde{C}\widetilde{D}G(p)\ar[r]^{\alpha_{G(p)}}&G(p)
}
\end{eqnarray}
is commutative, where $\lambda:F\to G$ is an arrow of $Sh_{\mathsf{Diff}}^{\mathcal{C}}\mathcal{D}_{X}$ and $p$ is a plot of $\mathcal{D}_{X}$.
The following diagram (\ref{diag:lambda}) explains the constructions of elements of $\eta_{X}^{*}\widetilde{C}\widetilde{D}F(p)$, $\eta_{X}^{*}\widetilde{C}\widetilde{D}G(p)$ and $\eta_{X}^{*}\widetilde{C}\widetilde{D}\lambda_{p}$. 
{\small 
\begin{eqnarray}\label{diag:lambda}
\xymatrix@R=5pt@C=4pt{
&&&&&&((\eta_{X} p)^{*}\widetilde{D}G)_{w}\\
&(\eta_{X} p)^{*}\widetilde{D}G(U_{\eta_{X} p})\ar[r]&(\eta_{X} p)^{*}\widetilde{D}G(W)\ar[r]&\cdots \ar[r]&(\eta_{X} p)^{*}\widetilde{D}G(\mathcal{U}^{w})\ar[rru]^{\star}\ar[rr]^{\star}\ar[rrd]_{\star}&&((\eta_{X} p)^{*}\widetilde{D}G)_{w'}\\
&&&&&&((\eta_{X} p)^{*}\widetilde{D}G)_{w''}\\
&&&&&&((\eta_{X} p)^{*}\widetilde{D}F)_{w}\ar@{.>}[uuu]\\
\widetilde{D}G(V)\ar[ruuu]&(\eta_{X} p)^{*}\widetilde{D}F(U_{\eta_{X} p})\ar[r]\ar@{.>}[uuu]^{(\eta_{X} p)^{*}\lambda_{U_{\eta_{X}p}}}&(\eta_{X} p)^{*}\widetilde{D}F(W)\ar[r]\ar@{.>}[uuu]^{(\eta_{X} p)^{*}\lambda_{W}}&\cdots\ar[r]&(\eta_{X} p)^{*}\widetilde{D}F(\mathcal{U}^{w})\ar@{.>}[uuu]^{(\eta_{X} p)^{*}\lambda_{\mathcal{U}^{w}}}\ar[rru]^{\star}\ar[rr]^{\star}\ar[rrd]_{\star}&&((\eta_{X} p)^{*}\widetilde{D}F)_{w'}\ar@<1.0ex>@{.>}[uuu]\\
&&&&&&((\eta_{X} p)^{*}\widetilde{D}F)_{w''}\ar@<2.0ex>@{.>}[uuu]\\
&&&&&\\
\widetilde{D}F(V)\ar[rrrruuu]\ar[rruuu]\ar[uuu]^{\widetilde{D}\lambda_{V}}\ar[rr]^{\widetilde{D}F_{X}(V'\hookrightarrow V)}\ar[ruuu] && \widetilde{D}F(V')\ar[luuu]\ar[r]\ar[uuu]&\cdots&&
}
\end{eqnarray}
}

\noindent
The triangle with $\widetilde{D}F(V)$, $\widetilde{D}F(V')$ and $(\eta_Xp)^*\widetilde{D}F(U_{\eta_Xp})$ and the parallelogram with $\widetilde{D}\lambda_V$ and $(\eta_{X} p)^{*}\lambda_{U_{\eta_{Xp}}}$ in the left-hand side shows how $(\eta_{X} p)^{*}\lambda_{U_{\eta_{Xp}}}$ is made. 
In fact, the morphism $(\eta_{X} p)^{*}\lambda_{U_{\eta_{Xp}}}$ is induced by the universality of $(\eta_{X} p)^{*}\widetilde{D}F(U_{\eta_{X} p})$ and makes the parallelogram commutative. Moreover, note that $\widetilde{D}\lambda_{V}$ is obtained by the universality of $\widetilde{D}G(V)$. Then, for $(s_{qV}\in F|_{q}(q^{-1}(V)))_{q\in\mathcal{D}_{X}}\in\widetilde{D}F(V)$, we see that 
\begin{align*}
(\eta_{X} p)^{*}\lambda_{U_{\eta_{X} p}}([(s_{qV})_{q\in\mathcal{D}_{X}}]_{\eta_{X}p(U_{\eta_{X}p})})&=[\widetilde{D}\lambda_{V}((s_{qV})_{q\in\mathcal{D}_{X}})]_{\eta_Xp(U_{\eta_Xp})}\\
&=[(\lambda_{q^{-1}(V)}(s_{qV}))_{q\in\mathcal{D}_{X}}]_{\eta_Xp(U_{\eta_Xp})}.
\end{align*}
Here, $[-]_{\eta_Xp(U_{\eta_Xp})}$ is an element of $(\eta_Xp)^*\widetilde{D}F(U_{\eta_Xp})$ or $(\eta_Xp)^*\widetilde{D}G(U_{\eta_Xp})$. Observe that 
$[(s_{qV})_{q\in\mathcal{D}_{X}}]_{\eta_{X}p(U_{\eta_{X}p})}\in (\eta_Xp)^*\widetilde{D}F(U_{\eta_Xp})$ and $[\widetilde{D}\lambda_V((s_{qV})_{q\in\mathcal{D}_X})]_{\eta_Xp(U_{\eta_Xp})}\in (\eta_Xp)^*\widetilde{D}G(U_{\eta_Xp})$.

 For every open subset $W\subset U_{\eta_{X}p}$, the map $(\eta_{X} p)^{*}\lambda_{W}$ is induced by the same argument as above. The horizontal arrows $(\eta_Xp)^*\widetilde{D}G(U_{\eta_Xp})\to\cdots\to (\eta_{X}p)^*\widetilde{D}G(\mathcal{U}^w)$ and $(\eta_Xp)^*\widetilde{D}F(U_{\eta_Xp})\to\cdots\to (\eta_{X}p)^*\widetilde{D}F(\mathcal{U}^w)$ are induced by the inclusions $\mathcal{U}^w\subset\cdots\subset W\subset U_{\eta_Xp}$. 
 The stalks of $(\eta_{X}p)^{*}\widetilde{D}G$ and $(\eta_{X} p)^{*}\widetilde{D}F$ at each element $w'$ of $\mathcal{U}^w$ are indicated by $((\eta_{X}p)^{*}\widetilde{D}G)_{w'}$ and $((\eta_{X} p)^{*}\widetilde{D}F)_{w'}$. The morphisms with $\star$ in the diagram (\ref{diag:lambda}) consist of the natural maps of the colimit diagrams of stalks and 
 explain the property of elements of $\eta_X^*\widetilde{C}\widetilde{D}F(p)$; that is, for each $w\in U_p$, each element $(a_u)_{u\in U_{\eta_Xp}}$ of $\eta_X^*\widetilde{C}\widetilde{D}F(p)$ has an open set $\mathcal{U}^w$ and a section $\sigma^w$ of $(\eta_Xp)^*\widetilde{D}F(\mathcal{U}^w)$ such that $[\sigma^w]_v=a_v$ for every $v\in\mathcal{U}^w$. The diagram  (\ref{diag:lambda}) shows that each arrow with $\star$ sends the section $\sigma^w$ to $[\sigma^w]_{w'}=a_{w'}$. 
 The universalities of $\{((\eta_Xp)^*\widetilde{D}F)_v\}_{v\in\mathcal{U}^w}$ give 
 the dot arrows $\{(\eta_Xp)^*\lambda_v:((\eta_{X}p)^*\widetilde{D}F)_v \to \cdot \! \cdot \cdot \! \to
 ((\eta_Xp)^*\widetilde{D}G)_v\}_{v\in\mathcal{U}^w}$. 
 By construction, for every $w'\in\mathcal{U}^w$, each right rectangle including $(\eta_{X}p)^{*}\lambda_{\mathcal{U}^{w}}$ and $(\eta_{X} p)^{*}\lambda_{w'}$ is commutative. Thus, we see that 
\begin{center}
$\eta_{X}^{*}\widetilde{C}\widetilde{D}\lambda_{p}=\prod_{w\in U_{p}}(\eta_{X} p)^{*}\lambda_{w}$.
\end{center}

We are ready to show the commutativity of the diagram (\ref{diag:nat.alpha}). 
Let $\xi$ be an element of the form  $([[(s_{qV})_{q\in \mathcal{D}_{X}}]_{\eta_{X} p(\mathcal{U}^u)}]_{u})_{u\in U_{\eta_{X}p}}$ in $\eta_{X}^{*}\widetilde{C}\widetilde{D}F(p)$. Let $\Sigma_{p}$ be the image of $\xi$
by $\alpha_{F(p)}$ with a compatible family $\{\sigma_{p}^{w}\in F|_{p}(\mathcal{U}^{w})\}_{w\in U_{\eta_Xp}}$, where $w\in\mathcal{U}^w$, $p(\mathcal{U}^w)\subset \mathcal{V}^w$, $\sigma'^w_{p}\in F|_p(p^{-1}(\mathcal{V}^w))$ and $\sigma_{p}^{w}:=\sigma'^w_{p}|_{\mathcal{U}^{w}}$. 
In the diagram (\ref{diag:nat.alpha}),  we have   
$(\lambda_p\circ\alpha_{F(p)})
(\xi)
=\lambda_{p}(\Sigma_{p})$. 
Moreover, we see that 
\begin{align*}
(\eta_{X}^{*}\widetilde{C}\widetilde{D}\lambda_{p})(\xi)&=(\eta_{X}^{*}\widetilde{C}\widetilde{D}\lambda_{p})(([[(s_{qV})_{q\in \mathcal{D}_{X}}]_{\eta_{X}p(\mathcal{U}^u)}]_{u})_{u\in U_{\eta_{X} p}})\\
&=(\prod_{w\in U_{\eta_{X} p}}(\eta_{X} p)^{*}\lambda_{w})(([[(s_{qV})_{q\in \mathcal{D}_{X}}]_{\eta_{X} p(\mathcal{U}^u)}]_{u})_{u\in U_{\eta_{X} p}})\\
&=([\hspace{5pt}(\eta_{X}p)^{*}\lambda_{\mathcal{U}^u}([(s_{qV})_{q\in \mathcal{D}_{X}}]_{\eta_{X}p(\mathcal{U}^{u})})\hspace{5pt}]_{u})_{u\in U_{\eta_{X} p}}\\
&=([[(\lambda_{q|_{q^{-1}(V)}}(s_{qV}))_{q\in \mathcal{D}_{X}}]_{\eta_{X} p(\mathcal{U}^{u})}]_{u})_{u\in U_{\eta_{X} p}}.
\end{align*}
The second equality follows from the definition of $\eta_X^*\widetilde{C}\widetilde{D}\lambda_p$. The third is based on the commutative squares of the arrows with $\star$ and $(\eta_Xp)^*\lambda_{\mathcal{U}^w}$ in the diagram (\ref{diag:lambda}). The commutativity of the diagram with $\widetilde{D}\lambda_V$ and $(\eta_Xp)^*\lambda_{\mathcal{U}^w}$ gives the last equality.  

To calculate $\alpha_{G(p)}$, we take a compatible family $\{\lambda_{p|_{\mathcal{U}^{w}}}(\sigma_{p}^{w})\in G|_{p}(\mathcal{U}^{w})\}_{w}$ from $([[(\lambda_{q|_{q^{-1}(V)}}(s_{qV}))_{q\in \mathcal{D}_{X}}]_{\eta_{X} p(\mathcal{U}^{u})}]_{u})_{u\in U_{\eta_{X} p}}
$.
Let $\Sigma'_{p}$ be the unique section induced from the compatible family. Then, we have 
$$
\alpha_{G(p)}\circ(\eta^{*}_{X}\widetilde{C}\widetilde{D}\lambda_{p})(([[(s_{qV})_{q\in \mathcal{D}_{X}}]_{\eta_{X}p(W)}]_{u})_{u\in U_{\eta_{X} p}})=\Sigma'_{p},
$$  where it satisfies $\hspace{2pt}\Sigma_{p}'|_{\mathcal{U}^{w}}=\lambda_{p|_{\mathcal{U}^{w}}}(\sigma_{p}^{w})$ for all $ w\in U_{p}$.
 Note that the argument of the beginning of the proof 
 guarantees that images of $\alpha$ do not depend on the echoice of the compatible family which defines $\alpha$. 

The calculation of $\lambda_p\circ\alpha_{F(p)}$ yields the equalities $(\lambda_{p}\Sigma_{p})|_{\mathcal{U}^{w}}=\lambda_{p|_{\mathcal{U}^{w}}}(\Sigma_{p}|_{\mathcal{U}^w})=\lambda_{p|_{\mathcal{U}^{w}}}(\sigma_{p}^{w})$.
By the sheaf property (the uniqueness of extensions), we have the commutativity of the diagram (\ref{diag:nat.alpha}).
\end{proof}

\begin{const}\label{cons:beta}
 (The natural transformation $\beta$) 
 Let $(X,\mathcal{D}_X)$ be a diffeological space and $Open(DX)$ the category of $D$-open subsets of $D(X,\mathcal{D}_X)$. For $F\in Sh_{\mathsf{Top}}^{\mathcal{C}}Open(DX)$ and $W\in D(X,\mathcal{D}_X)$, we construct a morphism 
$
\beta_{F(W)}:F(W)\to\widetilde{D}\eta^{*}_{X}\widetilde{C}F(W)
$. By definition, we see that 
\begin{align*}
\widetilde{D}\eta^{*}_{X}\widetilde{C}F(W)\hspace{3pt}
&=\text{lim}_{q\in\mathcal{D}_{X}}\eta^{*}_{X}\widetilde{C}F|_{q}(q^{-1}(W))\hspace{3pt}=\hspace{3pt}\text{lim}_{q\in\mathcal{D}_{X}}\widetilde{C}F|_{\eta_Xq}((\eta_{X} q)^{-1}(W))\hspace{3pt}\\
&=\text{lim}_{q\in\mathcal{D}_{X}}\hspace{3pt}a_{U_{\eta_{X} q}}\hspace{3pt}(\eta_{X} q)^{*}F\hspace{3pt}((\eta_{X} q)^{-1}(W))\\
&= \text{lim}_{q\in\mathcal{D}_{X}}\hspace{3pt}\{(s_{u})_{u}\in\prod_{u\in (\eta_Xq)^{-1}(W)}((\eta_{X} q)^{*}F)_{u}|(**)\},
\end{align*}
where $(**)$ denotes the condition described in Definition  \ref{defn:top.sheafification}.

We assign an element of $\widetilde{D}\eta^*_X\widetilde{C}F(W)$ to 
an element 
$s_W$ in $F(W)$ as follows. Since $(\eta_{X} q)((\eta_{X}q)^{-1}(W))\subset W$, it follows that $[s_{W}]_{(\eta_{X} q)((\eta_{X} q)^{-1}(W))}$ is in 
$$(\eta_Xq)^*F((\eta_Xq)^{-1}(W))=\text{colim}_{(\eta_{X} q)((\eta_{X} q)^{-1}(W))\subset V} F((\eta_{X} q)^{-1}(V)).$$ 
A section $s_{W}$ gives 
an element of the form 
$([[s_{W}]_{(\eta_{X} q)((\eta_{X} q)^{-1}(W))}]_{u})_{u\in (\eta_Xq)^{-1}(W)}$
in $(\eta_{X} q)^{*}F((\eta_{X} q)^{-1}(W))$. It is immediate that this element satisfies the sheafification condition $(**)$. Furthermore, this construction does not depend on the choice of $q\in\mathcal{D}_X$, so we obtain the family 
$\{([[s_{W}]_{(\eta_{X} q)((\eta_{X}q)^{-1}(W))}]_{u})_{u\in (\eta_Xq)^{-1}(W)}\}_{q\in\mathcal{D}_{X}}$. Then define $\beta_{F(W)}$ by
\begin{eqnarray}\label{def:beta}
\beta_{F(W)}(s_{W})=(([[s_{W}]_{(\eta_{X} q)((\eta_{X}q)^{-1}(W))}]_{u})_{u\in (\eta_Xq)^{-1}(W)})_{q\in\mathcal{D}_{X}}. 
\end{eqnarray}
\end{const}

\begin{lem}
For each $F\in Sh_{\mathsf{Top}}^{\mathcal{C}}Open(DX)$ and $W\in D(X,\mathcal{D}_X)$, the map $\beta_{F(W)}$ in Construction \ref{cons:beta} is well defined.
\end{lem}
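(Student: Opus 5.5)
To show that $\beta_{F(W)}$ is well defined, I will check three things about the assignment \eqref{def:beta}, in this order. First, for each fixed $q\in\mathcal{D}_X$ the component is a genuine element of $a_{U_{\eta_Xq}}(\eta_Xq)^*F((\eta_Xq)^{-1}(W))$. Second, these components form a cone, i.e.\ an element of $\lim_{q\in\mathcal{D}_X}$. Third, the resulting map $F(W)\to\widetilde{D}\eta^*_X\widetilde{C}F(W)$ is a morphism in $\mathcal{C}$.

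For the first point, I start from two facts: $\eta_X$ is the identity on $X$, and $W$ is $D$-open. Hence $(\eta_Xq)^{-1}(W)=q^{-1}(W)$ is an open subset of $U_q$. Since $(\eta_Xq)(q^{-1}(W))\subset W$ and $W\in Open(DX)$, the set $W$ is one of the indices in the colimit of Definition \ref{defn:overlineC} computing $(\eta_Xq)^*F(q^{-1}(W))$. So the class $\tau_q:=[s_W]_{(\eta_Xq)(q^{-1}(W))}$ makes sense. The family of germs $([\tau_q]_u)_{u}$ is exactly the image of $\tau_q$ under the unit $g$ of Theorem \ref{thm:univ.sheafification}. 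To verify condition $(**)$, for every $w\in q^{-1}(W)$ I take $\mathcal{U}^w=q^{-1}(W)$ and $\sigma^w=\tau_q$.

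For the second point, let $f:p\to q$ be a morphism in $\mathcal{D}_X$, so $q\circ f=p$. I must show that the transition map of the diagram $q\mapsto \eta_X^*\widetilde{C}F|_q(q^{-1}(W))$ sends the $q$-component to the $p$-component. This transition map is $\widetilde{C}F(\eta_X f|_{p^{-1}(W)})$, and by the proof of Proposition \ref{prop:well-def.sheafification} it acts germwise. It sends $[[s_W]_{q(q^{-1}(W))}]_{v}$ to $[\,\overline{C}F(f)([s_W]_{q(q^{-1}(W))})\,]_u$ for $f(u)=v$. By Proposition \ref{prop:overlineC}, $\overline{C}F(f)$ is the map induced by the universality of the colimit, and it sends the class of $s_W\in F(W)$ to the class of the same $s_W$, now over $p(p^{-1}(W))$. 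Since $p(p^{-1}(W))\subset q(q^{-1}(W))\subset W$, the index $W$ is admissible for both colimits, so this is consistent. The image is therefore $([[s_W]_{p(p^{-1}(W))}]_u)_u$, which is the $p$-component. This gives an element of the limit.

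For the third point, $\beta_{F(W)}$ is the map into the limit induced by the cone of maps $F(W)\to \widetilde{C}F(\eta_Xq)|(q^{-1}(W))$, each being the composite of a colimit coprojection with the sheafification unit $g$. Both are morphisms of $\mathcal{C}$, so the universality of the limit gives a morphism of $\mathcal{C}$ (for instance a homomorphism when $\mathcal{C}=\mathsf{Grp},\mathsf{Ab},\mathsf{Vect}$). This also shows that the formula does not depend on any choice. I expect the main obstacle to be the second point, namely identifying the restriction maps of $\widetilde{C}F$ along an arbitrary smooth $f$, not just along inclusions, with the germwise formula. I would handle it by writing out Definition \ref{defn:sheafification} on $p^{-1}(W)\subset U_p$ and using Proposition \ref{prop:well-def.sheafification} to identify $a_{p^{-1}(W)}$ with the restriction of $a_{U_p}$.
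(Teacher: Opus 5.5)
Your proposal is correct and essentially matches the paper's proof. The paper's argument is your second point: using the colimit map $(f|_{(\eta_Xp)^{-1}(W)})^*$, induced by universality, and the induced stalk maps $f_u$, it checks that $\eta^*_X\widetilde{C}F(f)$ carries the $q$-component of \eqref{def:beta} to the $p$-component. Your first and third points (each component satisfies $(**)$, and $\beta_{F(W)}$ is a morphism of $\mathcal{C}$ by the universality of the limit) are routine checks that the paper only asserts in Construction \ref{cons:beta} or leaves implicit.
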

\begin{proof}
Because of the limit construction of $\widetilde{D}$, we need to check whether $\eta^{*}_{X}\widetilde{C}F(f)$ for $f:p\to q$ sends each $([[s_{W}]_{(\eta_{X} q)(\eta_{X} q)^{-1}(W)}]_{u})_{u\in (\eta_Xq)^{-1}(W)}$ to the element $([[s_{W}]_{(\eta_{X} p)(\eta_{X}p)^{-1}(W)}]_{u})_{u\in (\eta_Xp)^{-1}(W)}$ in Construction \ref{cons:beta}.
Observe that $\eta^*_X\widetilde{C}F(p)$ is a subset of $\prod_{u\in U_{\eta_Xp}}((\eta_Xp)^*F)_u$. The construction of $\eta^{*}_{X}\widetilde{C}F(f):\eta^*_X\widetilde{C}F(q)\to \eta^*_X\widetilde{C}F(p)$ gives the following commutative diagram:
\begin{eqnarray}
\xymatrix@C=5pt@R=8pt{
 && & & &((\eta_{X} p)^{*}F)_{v}\\
 &&(\eta_{X} p)^{*}F((\eta_{X} p)^{-1}(W))\ar[rrru]^{\star}\ar[rd]_{\star}\ar[rr]_{\star} & &((\eta_{X} p)^{*}F)_{v'} &\\
 && &((\eta_{X} p)^{*}F)_{v''} & &((\eta_{X} q)^{*}F)_{u}\ar@{.>}[uu]_{f_u}\\
 &&(\eta_{X} q)^{*}F((\eta_{X} q)^{-1}(W))\ar[uu]_(.3){(f|_{(\eta_{X} p)^{-1}(W)})^{*}}\ar[rd]_{\diamond}\ar[rr]_{\diamond}\ar[rrru]_{\diamond}& &((\eta_{X} q)^{*}F)_{u'}\ar@{.>}[uu]_{f_{u'}}&\\
 && &((\eta_{X} q)^{*}F)_{u''}\ar@{.>}[uu]_{f_{u''}}&&\\
 F(W)\ar[rruu]_{\Diamond}\ar[rruuuu]^(.7){\bigstar}&&&&&
}
\end{eqnarray}
Here, the large star and the small stars denote the assignments $s_W\mapsto [s_{W}]_{(\eta_{X} p)(\eta_{X}p)^{-1}(W)}$ and $[s_{W}]_{(\eta_{X} p)(\eta_{X}p)^{-1}(W)}\mapsto ([[s_{W}]_{(\eta_{X} p)(\eta_{X}p)^{-1}(W)}]_{u})_{u\in (\eta_Xp)^{-1}(W)}$, respectively. On the other hand, the large rhombus and the small rhombuses 
are defined by  $s_W\mapsto [s_{W}]_{(\eta_{X} q)(\eta_{X} q)^{-1}(W)}$ and $[s_{W}]_{(\eta_{X} q)(\eta_{X} q)^{-1}(W)}\mapsto([[s_{W}]_{(\eta_{X} q)(\eta_{X} q)^{-1}(W)}]_{u})_{u\in (\eta_Xq)^{-1}(W)}$. 
The map $(f|_{(\eta_Xp)^{-1}(W)})^*$ is induced by the universality of $(\eta_Xq)^*F((\eta_Xq)^{-1}(W))$; see Proposition \ref{prop:overlineC}. Note the relation of $f^{-1}((\eta_{X} q)^{-1}(W))=f^{-1} q^{-1}\eta^{-1}_{X}(W)=$ 
$p^{-1}\eta_{X}^{-1}(W)=(\eta_{X}p)^{-1}(W)$. Each dotted arrow $((\eta_{X} q)^{*}F)_{v}\dashrightarrow((\eta_{X} p)^{*}F)_{u}$ shows the map between stalks 
  whose subscripts satisfy $f(u)=v$, $f(u')=v'$ and $f(u'')=v''$.  Every square with a small star, a small rhombus 
  and $(f|_{(\eta_Xp)^{-1}(W)})^*$ is commutative because each dotted arrow is induced by the universality of the stalk. 
By definition, the map $\eta^*_X\widetilde{C}F(f)=\prod_{u\in U_{\eta_Xp}}f_u$ sends $([[s_{W}]_{(\eta_{X}q)(\eta_{X} q)^{-1}(W)}]_{v})_{v\in U_{\eta_{X} q}} $ to $ ([[s_{W}]_{(\eta_{X} p)(\eta_{X} p)^{-1}(W)}]_{u})_{u\in U_{\eta_{X} p}}$.
\end{proof}

The compatibility of $W\in D(X,\mathcal{D}_{X})$ is guaranteed by the construction of $\widetilde{C},\widetilde{D}$ and $\eta^{*}_{X}$. We show the naturality of $\{\beta_{F(W)}\}_{W}$ for $F\in Sh_{\mathsf{Top}}^{\mathcal{C}}Open(DX)$.

\begin{lem}\label{lem:well-def of beta}
Let $\{\beta_{F(W)}:F(W)\to \widetilde{D}\eta^{*}_{X}\widetilde{C}F(W)\}_{F\in Sh_{\mathsf{Top}}^{\mathcal{C}}Open(DX),W\in D(X,\mathcal{D}_{X})}$ be the family described in Construction \ref{cons:beta}. The family of maps is natural for each object $F$ in $Sh_{\mathsf{Top}}^{\mathcal{C}}Open(DX)$ and gives rise to the natural transformation $\beta:id_{Sh_{\mathsf{Top}}^{\mathcal{C}}Open(DX)}\to\widetilde{D}\eta^{*}_{X}\widetilde{C}$. 
\end{lem}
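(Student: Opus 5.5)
To prove Lemma \ref{lem:well-def of beta}, I will split the statement into two naturality claims. First, for a fixed $F\in Sh_{\mathsf{Top}}^{\mathcal{C}}Open(DX)$, the family $\{\beta_{F(W)}\}_W$ is a morphism of presheaves $F\to\widetilde{D}\eta^*_X\widetilde{C}F$. That is, for every inclusion $\iota:W'\hookrightarrow W$ of $D$-open sets,
\[
\widetilde{D}\eta^*_X\widetilde{C}F(\iota)\circ\beta_{F(W)}=\beta_{F(W')}\circ F(\iota).
\]
Second, for every morphism $\lambda:F\to F'$ of sheaves,
\[
\widetilde{D}\eta^*_X\widetilde{C}\lambda_W\circ\beta_{F(W)}=\beta_{F'(W)}\circ\lambda_W
\]
for all $W$. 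In both cases the target is a limit over $q\in\mathcal{D}_X$, so by the universality of the limit (Remark \ref{rem.widetildeD}) it suffices to compare the $q$-components. Each $q$-component lies in $a_{U_{\eta_Xq}}(\eta_Xq)^*F((\eta_Xq)^{-1}(W))$, which is a subset of a product of stalks. So it is enough to compare germs at each point $u$.

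\textbf{Restriction naturality.} The restriction map $\widetilde{D}\eta^*_X\widetilde{C}F(\iota)$ is induced componentwise. On the $q$-component it is the restriction of the sheafification $a_{U_{\eta_Xq}}$ from $(\eta_Xq)^{-1}(W)$ to $(\eta_Xq)^{-1}(W')$. By Definition \ref{defn:top.sheafification}, this restriction simply forgets the coordinates $u\notin(\eta_Xq)^{-1}(W')$. Starting from $s_W$, the $u$-coordinate for $u\in(\eta_Xq)^{-1}(W')$ is therefore $[[s_W]_{\eta_Xq((\eta_Xq)^{-1}(W))}]_u$. The other side gives $[[s_W|_{W'}]_{\eta_Xq((\eta_Xq)^{-1}(W'))}]_u$. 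These two germs agree: in the doubly-indexed colimit defining the stalk $((\eta_Xq)^*F)_u$, the elements $s_W\in F(W)$ and $s_W|_{W'}\in F(W')$ are identified through the transition map $F(W'\hookrightarrow W)$. This uses that $\eta_Xq(\mathcal{O})\subset W'$ for small neighborhoods $\mathcal{O}$ of $u$, because $(\eta_Xq)^{-1}(W')$ is open.

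\textbf{Naturality in $F$.} I will use the description already obtained in the proof of Lemma \ref{lem:well-def of alpha}. There, a map induced by $\lambda$ on the sheafification acts as $\prod_u(\eta_Xq)^*\lambda_u$, where $(\eta_Xq)^*\lambda_u$ is induced on stalks. Together with the parallelogram in diagram (\ref{diag:lambda}), this shows that $\lambda$ commutes with the colimit classes: $(\eta_Xq)^*\lambda([s_V])=[\lambda_V(s_V)]$. Hence both composites send $s_W$ to the family $(([[\lambda_W(s_W)]_{\eta_Xq((\eta_Xq)^{-1}(W))}]_u)_u)_q$. Finally, the $q$-components are compatible with the limit, which is the preceding well-definedness lemma, so the limit-induced maps agree.

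The main obstacle I expect is bookkeeping rather than mathematics. Specifically, I must verify that the nested colimit indices match, i.e.\ that the germ of $s_W$ and the germ of $s_W|_{W'}$ really define the same element of the stalk. The cleanest route is to rewrite the stalk $((\eta_Xq)^*F)_u$ as $\text{colim}_{u\in\mathcal{O}}\,\text{colim}_{\eta_Xq(\mathcal{O})\subset V}F(V)$ and note that $V=W'$ is cofinal once $\mathcal{O}\subset(\eta_Xq)^{-1}(W')$.
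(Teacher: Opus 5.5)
Your proposal is correct and takes essentially the same approach as the paper, which also checks naturality in $\lambda$ one plot $q$ at a time and one germ at a time. It uses the same two facts: $(\eta^*_X\widetilde{C}\lambda)_{q|_U}=\prod_{u}(\eta_Xq)^*\lambda_u$, and $(\eta_Xq)^*\lambda_U([s_W])=[\lambda_W(s_W)]$. The one difference is that you explicitly verify that $\beta_F$ is compatible with restrictions $W'\subset W$, via the cofinality argument in the stalk, whereas the paper only asserts this in a sentence before the lemma.
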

\begin{proof}
We need to verify the diagram
\begin{eqnarray}\label{diag:nat.beta}
\xymatrix@C=45pt{
F'(W)\ar[r]^(.45){\beta_{F'(W)}}&\widetilde{D}\eta^{*}_X\widetilde{C}F'(W)\\
F(W)\ar[r]^(.45){\beta_{F(W)}}\ar[u]^{\lambda_{W}}&\widetilde{D}\eta^{*}_{X}\widetilde{C}F(W)\ar[u]_{(\widetilde{D}\eta^{*}_X\widetilde{C}\lambda)_{W}}
}
\end{eqnarray}
is commutative, where $\lambda:F\to F'$ is a morphism of $Sh_{\mathsf{Top}}^{\mathcal{C}}Open(DX)$. 
We consider the diagram (\ref{diag:eta.c.lambda}) below that reveals the construction of $(\eta_X^*\widetilde{C}\lambda)_{q|_U}:\eta_X^*\widetilde{C}F(q|_{U})\to\eta_X^*\widetilde{C}F'(q|_U)$, where $U:=(\eta_Xq)^{-1}(W)$. 
\begin{eqnarray}\label{diag:eta.c.lambda}
\xymatrix@C=8pt@R=10pt{
 && & & &((\eta_{X} q)^{*}F')_{u}\\
 &&(\eta_{X} q)^{*}F'(U)\ar[rrru]\ar[rd]\ar[rr] & &((\eta_{X} q)^{*}F')_{u'} &\\
 && &((\eta_{X} q)^{*}F')_{u''} & &((\eta_{X} q)^{*}F)_{u}\ar@{.>}[uu]_{_{(\eta_{X} q)^{*}\lambda_{u}}}\\
 F'(W)\ar[rruu]&&(\eta_{X} q)^{*}F(U)\ar[uu]_(.3){_{(\eta_{X}q)^{*}\lambda_{U}}}\ar[rd]\ar[rr]\ar[rrru]& &((\eta_{X}q)^{*}F)_{u'}\ar@{.>}[uu]_(.6){_{(\eta_{X} q)^{*}\lambda_{u'}}}&\\
 && &((\eta_{X}q)^{*}F)_{u''}\ar@{.>}[uu]_(.25){_{(\eta_{X} q)^{*}\lambda_{u''}}}&&\\
 F(W)\ar[uu]^{\lambda_{W}}\ar[rruu]&&&&&
}
\end{eqnarray}
We observe that the maps $(\eta_{X} q)^{*}\lambda_{U}$ and $\{(\eta_{X} q)^{*}\lambda_{u}\}_{u\in U}$ are induced by the universalities of $(\eta_{X} q)^{*}F(U)$ and 
$\{((\eta_{X} q)^{*}F)_{u}\}_{u\in U}$, respectively. Then, this diagram (\ref{diag:eta.c.lambda}) is commutative. The map $(\eta_X^*\widetilde{C}\lambda)_{q|_U}$ is then defined by $(\eta_X^*\widetilde{C}\lambda)_{q|_U} =\prod_{u\in U}(\eta_Xq)^*\lambda_u$. 

 Let $s_W$ be an element of $F(W)$. By the commutativity of the diagram (\ref{diag:eta.c.lambda}), we have elements $[s_W]_{\eta_Xq(U)}\in (\eta_Xq)^*F(U)$, $\lambda_W(s_W)\in F'(W)$, $[\lambda_W(s_W)]_{\eta_Xq(U)}\in(\eta_Xq)^*F'(U)$ and $[[s_W]_{\eta_Xq(U)}]_u\in((\eta_Xq)^*F)_u$. It follows that
\begin{eqnarray*}
(\eta_{X} q)^{*}\lambda_{U}([s_{W}]_{\eta_{X}q(U)})&=&[\lambda_{W}(s_{W})]_{\eta_{X} q(U)}\hspace{10pt}\text{and}\hspace{5pt}\\
(\eta_{X}q)^{*}\lambda_{u}([[s_{W}]_{\eta_{X} q(U)}]_{u})&=&[(\eta_{X} q)^{*}\lambda_{U}([s_{W}]_{\eta_{X}q(U)})]_{u}\\
&=&[[\lambda_{W}(s_{W})]_{\eta_{X} q(U)}]_u.
\end{eqnarray*}
Moreover, we have
\begin{eqnarray*}
\widetilde{D}\eta^{*}_{X}\widetilde{C}\lambda_{W}\circ\beta_{F(W)}(s_W)&=&\widetilde{D}\eta^{*}_{X}\widetilde{C}\lambda_{W}((([[s_{W}]_{\eta_{X} q(U)}]_{u})_{u\in U})_{q\in\mathcal{D}_{X}})\\
&=&
(\hspace{2pt}(\eta_{X}^{*}\widetilde{C}\lambda)_{q|_U}((([[s_{W}]_{\eta_{X} q(U)}]_{u})_{u\in U})\hspace{2pt})_{q\in\mathcal{D}_{X}}\\
&=&(\hspace{2pt}(\hspace{2pt}(\eta_{X}q)^{*}\lambda_{u}([[s_{W}]_{\eta_{X} q(U)}]_{u})\hspace{2pt})_{u\in U})_{q\in\mathcal{D}_{X}}\\
&=&(\hspace{2pt}([[\lambda_{W}(s_{W})]_{\eta_{X} q(U)}]_{u})_{u\in U})_{q\in\mathcal{D}_{X}}=\beta_{F'(W)}\circ\lambda_W(s_W)
\end{eqnarray*}
\end{proof}


\begin{proof}[Proof of Theorem \ref{thm:main,eta.ver.}.] 
Let $F$ be an element of $Sh_{\mathsf{Top}}^{\mathcal{C}}Open(DX)$ and $p$  a plot of $\mathcal{D}_{X}$. Let $G$ be an object in $Sh_{\mathsf{Diff}}^{\mathcal{C}}\mathcal{D}_{X}$ and $W$ an open subset of $D(X,\mathcal{D}_{X})$. 
We verify that the diagrams 
$$
 \xymatrix@C=30pt@R=20pt
 {
\eta_{X}^{*}\widetilde{C}F(p)\ar[r]^(.35){\eta^{*}_{X}\widetilde{C}\beta_{F}(p)}\ar[rd]_{id_{\eta^{*}_{X}\widetilde{C}F}(p)}&(\eta^{*}_{X}\widetilde{C})\widetilde{D}(\eta^{*}_{X}\widetilde{C})F(p)\ar[d]^{\alpha_{\eta^{*}_{X}\widetilde{C}F}(p)}\ \ \ and \  & \hspace{-1cm}\widetilde{D}G(W)\ar[r]^(.3){\beta_{\widetilde{D}G}(W)}\ar[rd]_{id_{\widetilde{D}G}(W)}&\widetilde{D}(\eta^{*}_{X}\widetilde{C})\widetilde{D}G(W)\ar[d]^{\widetilde{D}\alpha_{G}(W)}\\
&\eta^{*}_{X}\widetilde{C}F(p) &&\widetilde{D}G(W)
}
$$
 are commutative.\\
 \indent First, we consider the left triangle. 
 Let  
 $([[s_{V}]_{\eta_{X} p(W)}]_{u})_{u\in U_{\eta_{X} p}}$ be an element 
 in $\eta^{*}_{X}\widetilde{C}F(p)$, where $s_V\in F(V)$ for $V\in D(X,\mathcal{D}_{X})$.  By the sheafification condition $(**)$ in Definition \ref{defn:top.sheafification}, 
 for each $w$, there exist a neighborhood $w\in\mathcal{U}^{w}$, an open subset $\mathcal{V}^w\in Open(DX)$ satisfying $p(\mathcal{U}^w)\subset\mathcal{V}^w$ and a section $s_{\mathcal{V}^w}\in F(\mathcal{V}^w)$  such that $[[s_{\mathcal{V}^w}]_{\eta_Xp(\mathcal{U}^w)}]_v=[[s_{V}]_{\eta_{X} p(W)}]_v$ for all $v\in \mathcal{U}^w$. 
 The element is mapped to the following element in $(\eta^{*}_{X}\widetilde{C})\widetilde{D}(\eta^{*}_{X}\widetilde{C})F(p)$ by $\eta^{*}_{X}\widetilde{C}\beta_{F(p)}$;
 
 \medskip
\begin{center}
$([[\bm{s_{\mathcal{V}^w}}]_{\eta_{X} p(\mathcal{U}^w)}]_{w})_{w\in U_{\eta_{X} p}}\longmapsto\Biggr(\biggr [\Bigr[\bm{\big(([[s_{\mathcal{V}^w}]_{\eta_{X} q(\eta_{X}q)^{-1}(\mathcal{V}^w)}]_{w})_{w\in(\eta_{X} q)^{-1}(\mathcal{V}^w)}\bigl)_{q\in\mathcal {D}_{X}}}\Bigl]_{\eta_{X} p(\mathcal{U}^w)}\biggl]_{w}\Bigg)_{w\in U_{\eta_{X} p}}$.
\end{center} 
Then, we apply $\alpha_{\eta^{*}_{X}\widetilde{C}F}(p)$ to this image. Recall that the first step of $\alpha$ is the projection $\pi_p$ to a component with the plot $p$. 
$$
\xymatrix@C=30pt@R=15pt{
\Biggr(\biggr [\Bigr[\big(([[s_{\mathcal{V}^w}]_{\eta_{X} q(\eta_{X} q)^{-1}(\mathcal{V}^w)}]_{v})_{v\in(\eta_{X} q)^{-1}(\mathcal{V}^w)}\bigl)_{q\in \mathcal{D}_{X}}\Bigl]_{\eta_{X} p(\mathcal{U}^w)}\biggl]_{w}\Bigg)_{w\in U_{\eta_{X} p}}\ar@{|->}[d]^{\pi_{p}}\\
\Biggr(\biggr [\Bigr[([[s_{\mathcal{V}^w}]_{\eta_{X} p(\eta_{X} p)^{-1}(\mathcal{V}^w)}]_{v})_{v\in(\eta_{X} p)^{-1}(\mathcal{V}^w)}\Bigl]_{\eta_{X}p(\mathcal{U}^w)}\biggl]_{w}\Bigg)_{w\in U_{\eta_{X} p}}
}
$$

The next step gives a compatible family. We define a section $\widetilde{\sigma}_{p}^{w}$ of  
$(\eta_Xp)^*\eta_X^*\widetilde{C}F(\mathcal{U}^w
)$ 
by
\begin{center}
$\hspace{2pt}\widetilde{\sigma}_{p}^{w}:=\Bigr[([[s_{\mathcal{V}^w}]_{\eta_{X} p(\eta_{X} p)^{-1}(\mathcal{V}^w)}]_{w})_{w\in(\eta_{X} p)^{-1}(\mathcal{V}^w)}\Bigl]_{\eta_{X} p(\mathcal{U}^w)}$. 
\end{center}
For every $v\in \mathcal{U}^w$, each $[\widetilde{\sigma}_{p}^{w}]_{v}$ (germ at $v$) is a component of the image of $\pi_p$ because $\beta$ is made of just the quotient maps at each $v$. We observe that $\sigma'^w_p:=([[s_{\mathcal{V}^w}]_{\eta_{X} p(\eta_{X} p)^{-1}(\mathcal{V}^w)}]_{v})_{v\in(\eta_{X} p)^{-1}(\mathcal{V}^w)}$ is a representative of $\widetilde{\sigma}^w_p$. We define $\sigma_{p}^{w}:=([[s_{\mathcal{V}^w}]_{\eta_{X} p(\eta_{X} p)^{-1}(\mathcal{V}^w)}]_{v})_{v\in \mathcal{U}^w}=([[s_{\mathcal{V}^w}]_{\eta_{X} p(\mathcal{U}^w)}]_{v})_{v\in \mathcal{U}^w}$ and obtain the compatible family $\{\sigma_{p}^{w}\in\eta^{*}_{X}\widetilde{C}F(p|_{\mathcal{U}^w})\}_{w\in U_{\eta_{X}p}}$. From the sheaf property of $\eta^{*}_{X}\widetilde{C}$, there exists a unique element $\Sigma_p$ of $\eta^{*}_{X}\widetilde{C}F(p)$ whose restriction to $\mathcal{U}^w$ is equal to $\sigma_{p}^{w}$. Observe that it does not depend on the choice of $\mathcal{U}^w$ as having seen in Lemma \ref{lem:well-def of alpha}. 
We have
\begin{center}
$\alpha_{\eta_{X}^{*}\widetilde{C}F(p)}\circ\eta_{X}^{*}\widetilde{C}\beta_{F(p)}(([[s_{V}]_{\eta_{X}p(W)}]_{u})_{u\in U_{\eta_{X}p}})=\Sigma_{p}$.
\end{center}
On the other hand, every restriction of the given element to $\mathcal{U}^w$ coincides with $\sigma_{p}^{w}$; that is,
$([[s_{V}]_{\eta_{X}p(W)}]_{u})_{u\in U_{\eta_{X} p}}|_{\mathcal{U}^w}=([[s_{\mathcal{V}^w}]_{\eta_{X} p(\mathcal{U}^w)}]_{v})_{v\in \mathcal{U}^w}=\sigma_{p}^{w}$.
By the sheaf property(the uniqueness), the section $\Sigma_{p}$ is equal to $([[s_{V}]_{\eta_X p(W)}]_{v})_{v\in U_{\eta_Xp}}$.

Next, we consider the right-hand side triangle. Recall that 
$G$ and $W$ are an object of $Sh_{\mathsf{Diff}}^{\mathcal{C}}\mathcal{D}_{X}$ and an open subset of $D(X,\mathcal{D}_{X})$, respectively. We take the element 
\begin{center}
$\zeta:= (s_{qW}\in G|_{q}(q^{-1}(W)))_{q\in\mathcal{D}_{X}}\in \hspace{5pt}\text{lim}_{q\in\mathcal{D}_{X}}G|_{q}(q^{-1}(W))=\widetilde{D}G(W)$,
\end{center}
and consider $\widetilde{D}\alpha_{G}(W)\circ\beta_{\widetilde{D}G}(W)(\zeta)$. 
By the definition, we see that $\beta_{\widetilde{D}G}(W)$ sends the given element $\zeta$ to 
the element 
$$
\xymatrix@R=10pt{
\Biggl(\biggl(\Bigl[\bigl[(s_{qW})_{q\in\mathcal{D}_{X}} \bigr]_{\eta_{X} q'((\eta_{X} q')^{-1}(W))} \Bigr]_{w}\biggr)_{w\in(\eta_{X} q')^{-1}(W)}\Bigg)_{q'\in\mathcal{D}_{X}}
}
$$
in $\widetilde{D}\eta^{*}_{X}\widetilde{C}\widetilde{D}G(W)$.
Note that 
\begin{align*} 
\widetilde{D}\eta^{*}_{X}\widetilde{C}\widetilde{D}G(W)&=\text{lim}_{q'\in\mathcal{D}_{X}}\hspace{2pt}\eta^{*}_{X}\widetilde{C}\widetilde{D}G|_{q'}(q'^{-1}(W)),\\
&= \text{lim}_{q'\in\mathcal{D}_{X}}\hspace{2pt}\widetilde{C}\widetilde{D}G|_{\eta_{X} q'}((\eta_{X} q')^{-1}(W))\\
&= \text{lim}_{q'\in\mathcal{D}_{X}}\biggl\{(a_{w})_{w}\in\prod_{w\in(\eta_{X} q')^{-1}(W)}((\eta_{X} q'|_{(\eta_{X}q')^{-1}(W)})^{*}\widetilde{D}G)_{w}|(**)\biggr\},
\end{align*}
where $a_{w}$ corresponds to $\Bigl[\bigl[(s_{qW})_{q\in\mathcal{D}_{X}} \bigr]_{\eta_{X} q'((\eta_{X} q')^{-1}(W))} \Bigr]_{w}$. We apply  $\widetilde{D}\alpha_{G}(W)$ to the image of $\beta_{\widetilde{D}G}(W)$. As before, we obtain its projection to each $q'$ and make a compatible family. In this case, the family $\{s_{q'W}\in G|_{q'}(q'^{-1}(W))\}$, that consists of an unique element, is one of the family we want. That is  because the germs composing the image of $\beta_{\widetilde{D}G(W)}$ can be represented by a section. From the compatible family, we obtain the section $s_{q'W}\in G|_{q'}(q'^{-1}(W))$. Then, we see that 
$\widetilde{D}\alpha_{G}(W)\circ\beta_{\widetilde{D}G}(W)((s_{qW}\in G|_{q}(q^{-1}(W)))_{q\in\mathcal{D}_{X}})=(s_{qW}\in G|_{q}(q^{-1}(W)))_{q\in\mathcal{D}_{X}}$. Hence, the right triangle is also commutative.
\end{proof}

\subsection{The second main theorem and the corollary }\label{sect:Thm_A_2}
We describe the other main theorem.

Let $(X,Open(X))$ be a topological space and $\varepsilon$ the counit associated with the adjunction $(C, D)$. The continuous map $$\varepsilon_{X}:=\varepsilon_{(X,Open(X))}:DC(X,Open(X))\to (X,Open(X))$$ 
 is  the identity map on the underlying set $X$.

\begin{defn}(The functor $\varepsilon_{X*}$)\label{defn:epsilon_*}
 Let $Open(DCX)$ and $Open(X)$ denote the categories of open subsets of $DC(X,Open(X))$ and $(X,Open(X))$, respectively. The map $\varepsilon_X$ induces the functor $\varepsilon_{X*}:Sh_{\mathsf{Top}}^{\mathcal{C}}Open(DCX)\to Sh_{\mathsf{Top}}^{\mathcal{C}}Open(X)$ defined by
\begin{center}
$\varepsilon_{X*}\theta_{X}:=\theta_{X}\circ\varepsilon_{X}^{-1}$, 
$\hspace{5pt}(\varepsilon_{X*}\mu)_{U}=\mu_{\varepsilon_{X}^{-1}(U)}:\theta_{X}(\varepsilon_{X}^{-1}(U))\to\theta'_{X}(\varepsilon_{X}^{-1}(U))$
\end{center}
for each $U\in (X,Open(X))$, $\theta_X\in Sh_{\mathsf{Top}}^{\mathcal{C}}Open(DCX)$ and a morphism $\mu:\theta_X\to\theta'_X$ in  $Sh_{\mathsf{Top}}^{\mathcal{C}}Open(DCX)$. The presheaf $\varepsilon_{X*}\theta_{X}$ is the pushforward sheaf of $\theta_X$ along $\varepsilon_X$ and then  $\varepsilon_{X*}\theta_{X}$ is a sheaf. 
\end{defn}

\begin{thm}\label{thm:main,epsilon.ver}
 \text{\em (The adjunction  $\widetilde{C}\dashv\varepsilon_{X*}\widetilde{D}$)}
Let $Open(X)$ be a category of open subsets of topological space $X$ and $\varepsilon_{X*}$ the functor defined in Definition \ref{defn:epsilon_*}. Then, there is an adjunction 
$$
\xymatrix@C=8pt@R=18pt{
Sh_{\mathsf{Top}}^{\mathcal{C}}Open(X)\ar[rr]^{\widetilde{C}}&&Sh_{\mathsf{Diff}}^{\mathcal{C}}\mathcal{D}_{CX}\ar[ld]^{\widetilde{D}}\\
&Sh_{\mathsf{Top}}^{\mathcal{C}}Open(DCX). \ar[lu]^{\varepsilon_{X*}}\ar@{}[u]|{\perp}&
}
$$
\end{thm}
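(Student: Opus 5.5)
We mirror the proof of Theorem \ref{thm:main,eta.ver.}, now with $\widetilde{C}$ as left adjoint and $\varepsilon_{X*}\widetilde{D}$ as right adjoint. We construct a unit $\beta':id_{Sh_{\mathsf{Top}}^{\mathcal{C}}Open(X)}\to\varepsilon_{X*}\widetilde{D}\widetilde{C}$ and a counit $\alpha':\widetilde{C}\varepsilon_{X*}\widetilde{D}\to id_{Sh_{\mathsf{Diff}}^{\mathcal{C}}\mathcal{D}_{CX}}$, and then verify the two triangle identities. Two facts are used throughout. First, $\varepsilon_X$ is the identity on underlying sets, so $\varepsilon_X^{-1}(U)=U$ for $U\in Open(X)$; an open set of $X$ is thus also $D$-open in $DCX$. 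Second, $CDC=C$ (Remark \ref{rem:C-D}), so $\mathcal{D}_{CDCX}=\mathcal{D}_{CX}$, and the plots $q$ indexing the limit in $\widetilde{D}$ are exactly the plots of $CX$.

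\emph{The unit $\beta'$.} For $\theta\in Sh_{\mathsf{Top}}^{\mathcal{C}}Open(X)$ and $U\in Open(X)$ we have
$$\varepsilon_{X*}\widetilde{D}\widetilde{C}\theta(U)=\text{lim}_{q\in\mathcal{D}_{CX}}\, a_{U_q}\overline{C}\theta|_q(q^{-1}(U)).$$
Define
$$\beta'_{\theta(U)}(s_U)=\bigl(([[s_U]_{q(q^{-1}(U))}]_u)_{u\in q^{-1}(U)}\bigr)_{q}.$$
This is literally Construction \ref{cons:beta} with $\eta_X$ removed. Well-definedness and naturality go through verbatim as in the lemma following Construction \ref{cons:beta} and Lemma \ref{lem:well-def of beta}.

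\emph{The counit $\alpha'$.} For $G\in Sh_{\mathsf{Diff}}^{\mathcal{C}}\mathcal{D}_{CX}$ and $p\in\mathcal{D}_{CX}$, an element of $\widetilde{C}\varepsilon_{X*}\widetilde{D}G(p)$ is a family of germs $a_u=[[(s_q)_q]_{p(W)}]_u$ with $(s_q)_q\in\text{lim}_q G|_q(q^{-1}(V))$, where $V\in Open(X)$ and $p(W)\subset V$. As in Construction \ref{const:alpha}, we do the following.
\begin{enumerate}
\item Use condition $(**)$ to choose, for each $w$, a neighborhood $\mathcal{U}^w$ and an open $\mathcal{V}^w\supset p(\mathcal{U}^w)$ with a representative in $\widetilde{D}G(\mathcal{V}^w)$.
\item Project to the $p$-component to get $\sigma'^w_p\in G|_p(p^{-1}(\mathcal{V}^w))$.
\item Restrict to $\mathcal{U}^w$.
\item Glue, using Lemma \ref{lem:compatible}, to a unique $\Sigma_p\in G(p)$.
\end{enumerate}
Independence of choices and naturality are proved exactly as in Lemma \ref{lem:well-def of alpha}. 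One point needs checking: the projection to the $p$-component is legitimate because $p$ is itself an index of the limit, which is where $CDC=C$ enters. Replacing $\mathcal{V}^w\subset X$ by the $D$-open set $\varepsilon_X^{-1}(\mathcal{V}^w)$ changes nothing.

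\emph{Triangle identities.} The identity $\alpha'_{\widetilde{C}\theta}\circ\widetilde{C}\beta'_\theta=id$ is checked as in the left triangle of the proof of Theorem \ref{thm:main,eta.ver.}. After applying $\widetilde{C}\beta'$ and projecting to $p$, the local sections $\sigma^w_p$ equal the restrictions $([[s_{\mathcal{V}^w}]_{p(\mathcal{U}^w)}]_v)_{v\in\mathcal{U}^w}$ of the original element. Uniqueness of gluing in the sheaf $\widetilde{C}\theta|_p$ then gives the identity. The identity $\varepsilon_{X*}\widetilde{D}\alpha'_G\circ\beta'_{\varepsilon_{X*}\widetilde{D}G}=id$ is checked on $\zeta=(s_{qU})_q$. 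Here $\beta'$ produces germs that are globally represented by $\zeta$ itself, so the compatible family for each $q'$ is the single section $s_{q'U}$, and $\alpha'$ returns $\zeta$.

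\emph{Main obstacle.} I expect the main obstacle to be the order of the colimits and limits in the second triangle. The element $\beta'(\zeta)$ lives in $\text{lim}_{q'}\,a_{U_{q'}}(\text{colim}_{V}\,\text{lim}_q)$. To show that $\widetilde{D}\alpha'$ recovers the $q'$-component $s_{q'U}$, one needs to know that the germ representative $(s_q)_q$ over $V=U$ projects to $s_{q'U}$. This holds because the colimit index $V=U$ is attained, i.e. $q'(q'^{-1}(U))\subset U$ with $U$ open in $X$, so no passage to a smaller $V$ is needed. I will check this carefully, since this is the step where $\varepsilon_X^{-1}$ and $CDC=C$ must interact correctly.
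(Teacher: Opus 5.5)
Your proposal is correct and is essentially the paper's proof. Your $\beta'$ and $\alpha'$ are exactly the paper's $\delta$ and $\gamma$ (Constructions \ref{cons:delta} and \ref{cons:alpha.epsilon}), and both triangle identities are checked the way you describe.

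One small correction: $CDC=C$ plays no role in this theorem. Here $\widetilde{D}$ is applied to sheaves on $\mathcal{D}_{CX}$, so it is by definition a limit over $\mathcal{D}_{CX}$, and any plot $p$ is automatically an index you can project to; $CDC=C$ is only needed later, for Corollary \ref{cor:adjunction}.
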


In order to prove Theorem \ref{thm:main,epsilon.ver}, we show that the triangles 
$$
 \xymatrix@C=25pt@R=20pt
 {
\widetilde{C}\theta_X(p)\ar[r]^-{\widetilde{C}\delta_{\theta_X}(p)}\ar[rd]_{id_{\widetilde{C}\theta_X}(p)}&\widetilde{C}(\varepsilon_{X*}\widetilde{D})\widetilde{C}\theta_X(p)\ar[d]^{\gamma_{\widetilde{C}\theta_X}(p)} \ \ \ and&\hspace{-0.9cm}\varepsilon_{X*}\widetilde{D}G(W)\ar[r]^(.4){\delta_{\varepsilon_{X*}\widetilde{D}G}(U)}\ar[rd]_{id_{\varepsilon_{X*}\widetilde{D}G}(U)}&(\varepsilon_{X*}\widetilde{D})\widetilde{C}(\varepsilon_{X*}\widetilde{D})G(U)\ar[d]^{\varepsilon_{X*}\widetilde{D}\gamma_{G}(U)}\\
&\widetilde{C}\theta_X(p)&&\varepsilon_{X*}\widetilde{D}G(U)
}
$$
are commutative with natural transformations $\gamma$ and $\delta$.

\begin{const}\label{cons:alpha.epsilon}
(The natural transformation $\gamma$) 
We define a natural transformation $\gamma:\widetilde{C}(\varepsilon_{X*}\widetilde{D})\longrightarrow id_{Sh_{\mathsf{Diff}}^{\mathcal{C}}\mathcal{D}_{CX}}$. For $G\in Sh_{\mathsf{Diff}}^{\mathcal{C}}\mathcal{D}_{CX}$ and $q\in \mathcal{D}_{CX}$, 
we have 
\begin{eqnarray*}
\widetilde{C}(\varepsilon_{X*}\widetilde{D})G(q)&=&a_{U_q}q^*(\varepsilon_{X*}\widetilde{D}G)(U_q)\\
&=&\{(b_v)_{v\in U_q}\in\prod_{v\in U_q}(q^*\varepsilon_{X*}\widetilde{D}G)_v| \ (**) \ \},
\end{eqnarray*}
where $(**)$ is the sheafification condition in Definition \ref{defn:top.sheafification}.  Let $(q^*\varepsilon_{X*}\widetilde{D}G)_v$ denote the stalk at $v$ of $q^*\varepsilon_{X*}\widetilde{D}G$, namely, 
\begin{eqnarray}\label{eta.ver.element}
(q^*\varepsilon_{X*}\widetilde{D}G)_v=\text{colim}_{v\in U}\hspace{2pt}\text{colim}_{q(U)\subset V}\hspace{2pt}\text{lim}_{p\in\mathcal{D}_{CX}}G|_p(p^{-1}\varepsilon^{-1}_X(V)).
\end{eqnarray}
Thus, an element $(b_v)_v$ in $\widetilde{C}(\varepsilon_{X*}\widetilde{D})G(q)$ has the form 
\[
\biggl(\Bigl[\Bigl[\Bigl( s_{pV}\in G|_p(p^{-1}\varepsilon_X^{-1}(V)) \Bigr)_{p\in\mathcal{D}_{CX}}\Bigr]_{q(U)}   \Bigr]_v\biggr)_{v\in U_q}.
\]
 Every element of $\widetilde{C}(\varepsilon_{X*}\widetilde{D})G(q)$ satisfies the sheafification condition $(**)$ described in Definition \ref{defn:top.sheafification}. Then,  for all $w\in U_q$, there exist an open subset  $\mathcal{U}^w\subset U_q$ and a section $\bar{\sigma}^w\in q^*\varepsilon_{X*}\widetilde{D}G(\mathcal{U}^w)$ such that for $u\in \mathcal{U}^w$, $[\bar{\sigma}^w]_u=b_u$ in $(q^*\varepsilon_{X*}\widetilde{D}G)_u$. Let $\sigma'^w=(\sigma'^w_q\in G|_q(q^{-1}\varepsilon_X^{-1}(\mathcal{V}^w)))_{q\in\mathcal{D}_{CX}}$ be a representative of $\bar{\sigma}^w$. 
 
 By using the relation of $\mathcal{U}^w\subset q^{-1}\varepsilon_X^{-1}(\mathcal{V}^w)$, we have a family $\{\sigma_q^w\in G|_q(\mathcal{U}^w)\}_{w\in U_q}$, where $\sigma_q^w:=\sigma'^w_q|_{\mathcal{U}^w}$. Since the functor $G|_q$ is a sheaf on $Open(U_q)$, this compatible family gives a unique section $\Sigma_q$ of $G|_q(U_q)=G(q)$ such that $\Sigma_q|_{\mathcal{U}^w}=\sigma^w_q$ for each $w\in U_q$. We define $\gamma_{G(q)} : 
 \widetilde{C}(\varepsilon_{X*}\widetilde{D})G(q) \longrightarrow G(q)$ by
 \begin{eqnarray}
 \gamma_{G(q)}\Biggl(\biggl(\Bigl[\Bigl[\Bigl( s_{pV}\in G|_p(p^{-1}\varepsilon_X^{-1}(V)) \Bigr)_{p\in\mathcal{D}_{CX}}\Bigr]_{q(U)}   \Bigr]_v\biggr)_{v\in U_q}\Biggl)=\Sigma_q.
 \end{eqnarray}
\end{const}

\begin{lem}
The family $\{\sigma_q^w\in G|_q(\mathcal{U}^w)\}_{w\in U_q}$ in Construction \ref{cons:alpha.epsilon} is a compatible family.
\end{lem}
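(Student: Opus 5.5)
The plan follows Lemma \ref{lem:compatible}: compatibility is checked stalkwise and then glued using the sheaf property of $G|_q$ on $Open(U_q)$.

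\emph{Setup.} Take $w,w'\in U_q$ with $\mathcal{U}^w\cap\mathcal{U}^{w'}\neq\emptyset$. We must show
\[
\sigma^w_q|_{\mathcal{U}^w\cap\mathcal{U}^{w'}}=\sigma^{w'}_q|_{\mathcal{U}^w\cap\mathcal{U}^{w'}}
\]
in $G|_q(\mathcal{U}^w\cap\mathcal{U}^{w'})$. Fix a point $x$ of the intersection. By Construction \ref{cons:alpha.epsilon}, both $\bar{\sigma}^w$ and $\bar{\sigma}^{w'}$ have germ $b_x$ at $x$ in the stalk $(q^*\varepsilon_{X*}\widetilde{D}G)_x$. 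Hence there is an open $x\in\mathcal{O}\subset\mathcal{U}^w\cap\mathcal{U}^{w'}$ on which the restrictions of $\bar{\sigma}^w$ and $\bar{\sigma}^{w'}$ agree in $q^*\varepsilon_{X*}\widetilde{D}G(\mathcal{O})$.

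\emph{Unwinding the colimit.} This is the step needing care, since equality holds in a filtered colimit indexed by $D$-open $V\supset q(\mathcal{O})$. By filteredness there is a $D$-open set $\mathcal{V}\subset\mathcal{V}^w\cap\mathcal{V}^{w'}$ with $q(\mathcal{O})\subset\mathcal{V}$ such that the representatives $\sigma'^w$ and $\sigma'^{w'}$ already agree after restriction to $\widetilde{D}G(\mathcal{V})=\lim_{p}G|_p(p^{-1}\varepsilon_X^{-1}(\mathcal{V}))$. In the cases $\mathcal{C}=\mathsf{Sets},\mathsf{Grp},\mathsf{Ab},\mathsf{Vect}$, filtered colimits are computed on underlying sets, so this is legitimate.

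\emph{Projecting to $q$.} Projecting the limit onto its $q$-component, which commutes with restriction, gives
\[
\sigma'^w_q|_{q^{-1}\varepsilon_X^{-1}(\mathcal{V})}=\sigma'^{w'}_q|_{q^{-1}\varepsilon_X^{-1}(\mathcal{V})}.
\]
Since $\varepsilon_X$ is the identity on underlying sets, $\mathcal{O}\subset q^{-1}\varepsilon_X^{-1}(\mathcal{V})$. Restricting further to $\mathcal{O}$ therefore yields $\sigma^w_q|_{\mathcal{O}}=\sigma^{w'}_q|_{\mathcal{O}}$. This projection-and-restriction step is the real content; the rest is routine.

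\emph{Gluing.} Writing $\mathcal{O}_x$ for the set $\mathcal{O}$ found at $x$, the family $\{\mathcal{O}_x\}_{x}$ covers $\mathcal{U}^w\cap\mathcal{U}^{w'}$. Because $G|_q$ is a sheaf on $Open(U_q)$ (Definition \ref{defn:sheaf}), the locality (separation) axiom forces the two restrictions to agree on all of $\mathcal{U}^w\cap\mathcal{U}^{w'}$. This proves compatibility.
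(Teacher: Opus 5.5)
Your proof is correct and follows the same route as the paper. The paper just invokes the argument of Lemma~\ref{lem:compatible}: equal germs at each $x$, agreement on a neighborhood $\mathcal{O}_x$, then separation for the sheaf $G|_q$. Your explicit unwinding of the double colimit and projection onto the $q$-component is the step the paper spells out in the proof of Lemma~\ref{lem:well-def.gamma}.

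One small slip, which does not affect the argument: the colimit defining $q^*(\varepsilon_{X*}\widetilde{D}G)$ is indexed by $V\in Open(X)$ in the original topology of $X$, not by $D$-open sets. Correspondingly, the object you restrict to is $\varepsilon_{X*}\widetilde{D}G(\mathcal{V})=\widetilde{D}G(\varepsilon_X^{-1}(\mathcal{V}))$ rather than $\widetilde{D}G(\mathcal{V})$.
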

\begin{proof}
The argument as in the proof of Lemmas \ref{lem:compatible} enables us to conclude that the family is compatible. 
\end{proof}

\begin{lem}\label{lem:well-def.gamma}
The map of $\gamma_{G}(q):\widetilde{C}(\varepsilon_{X*}\widetilde{D})G(q)\to G(q)$ in  Construction \ref{cons:alpha.epsilon} is well defined.
\end{lem}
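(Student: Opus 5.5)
We follow the pattern of the first half of the proof of Lemma \ref{lem:well-def of alpha}: the only choices in Construction \ref{cons:alpha.epsilon} are the neighbourhoods $\mathcal{U}^w$, the sections $\bar{\sigma}^w$, the $D$-open sets $\mathcal{V}^w$ and the representatives $\sigma'^w$. We show that $\Sigma_q$ is independent of all of them. Fix $(b_v)_{v\in U_q}\in\widetilde{C}(\varepsilon_{X*}\widetilde{D})G(q)$ and suppose two sets of choices are made. They give compatible families $\{\sigma^w_q\in G|_q(\mathcal{U}^w)\}_w$ and $\{\mu^w_q\in G|_q(\mathcal{W}^w)\}_w$. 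By the preceding lemma each family is compatible, and they produce sections $\Sigma_q,\Sigma'_q\in G(q)$. It suffices to show $\Sigma_q=\Sigma'_q$.

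\textbf{Step 1: agreement of germs.} Take $x\in\mathcal{U}^w\cap\mathcal{W}^{w'}$. Then $[\bar{\sigma}^w]_x=b_x=[\bar{\mu}^{w'}]_x$ in the stalk (\ref{eta.ver.element}). Since this stalk is an iterated colimit, equality of classes means the following. There is an open $x\in\mathcal{O}_x\subset\mathcal{U}^w\cap\mathcal{W}^{w'}$ and a $D$-open $\mathcal{V}$ with $q(\mathcal{O}_x)\subset\mathcal{V}\subset\mathcal{V}^w\cap\mathcal{V}'^{w'}$ such that the representatives $\sigma'^w$ and $\mu'^{w'}$ have equal restrictions in $\lim_{p}G|_p(p^{-1}\varepsilon_X^{-1}(\mathcal{V}))$. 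Here we use that in $\mathsf{Sets}$-like categories $\mathcal{C}$ elements of filtered colimits are equal iff they agree at some later stage; both colimits are filtered, being indexed by neighbourhoods. Projecting to the component $q$ gives $\sigma'^w_q|_{q^{-1}\varepsilon_X^{-1}(\mathcal{V})}=\mu'^{w'}_q|_{q^{-1}\varepsilon_X^{-1}(\mathcal{V})}$. Restricting further to $\mathcal{O}_x\subset q^{-1}\varepsilon_X^{-1}(\mathcal{V})$ gives $\sigma^w_q|_{\mathcal{O}_x}=\mu^{w'}_q|_{\mathcal{O}_x}$.

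\textbf{Step 2: gluing.} The sets $\mathcal{O}_x$ cover $\mathcal{U}^w\cap\mathcal{W}^{w'}$, and $G|_q$ is a sheaf on $Open(U_q)$ by Definition \ref{defn:sheaf}. Hence $\sigma^w_q$ and $\mu^{w'}_q$ agree on $\mathcal{U}^w\cap\mathcal{W}^{w'}$. So the union $\{\sigma^w_q\}_w\cup\{\mu^w_q\}_w$ is a compatible family over the cover $\{\mathcal{U}^w\}\cup\{\mathcal{W}^w\}$ of $U_q$. The gluing axiom yields a unique section whose restrictions recover both families. By uniqueness of gluing for each subfamily, this section equals both $\Sigma_q$ and $\Sigma'_q$, so $\Sigma_q=\Sigma'_q$.

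\textbf{Step 3: compatibility with the target.} Finally, $\gamma_{G(q)}$ is a morphism in $\mathcal{C}$: it respects the group or vector space structure. This holds because restriction maps, colimit structure maps and the gluing operation are all morphisms of $\mathcal{C}$, and the construction is componentwise.

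The main obstacle is Step 1. There one must carefully unwind the double colimit in (\ref{eta.ver.element}) and check that equality of germs descends to equality of the $q$-component of the limit on a smaller domain. The rest is a direct repetition of the argument of Lemma \ref{lem:well-def of alpha}.
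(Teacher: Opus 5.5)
Your proposal is correct and takes essentially the same route as the paper's proof: you unwind the iterated colimit to get a neighbourhood $\mathcal{O}_x$ and an open set on which the $q$-components of the two representatives agree, restrict, and conclude by the sheaf property of $G|_q$. One small slip is that $\mathcal{V}^w$ and $\mathcal{V}$ index the inner colimit over $Open(X)$, so they are open in $X$ rather than merely $D$-open; this does not affect the argument, and your comparison of $\sigma^w_q$ with $\mu^{w'}_q$ for $w\neq w'$ is, if anything, slightly more careful than the paper's.
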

\begin{proof}
In order to prove the assertion, 
the same argument as in the proof of Lemma \ref{lem:well-def of alpha} is applicable. Let $([[(s_{pV}\in G|_p(p^{-1}\varepsilon_X^{-1}(V)))_{p\in\mathcal{D}_X}]_{q(U)}]_v)_{v\in U_q}$ be an element of $\widetilde{C}(\varepsilon_{X*}\widetilde{D})G(q)$ in Construction \ref{cons:alpha.epsilon}. We suppose that the given element induces
two family $\{\sigma'^w_q\in G|_q (q^{-1}\varepsilon_X^{-1}(\mathcal{V}^w))\}_w$ and $\{\mu'^w_q\in G|_q (q^{-1}\varepsilon_X^{-1}(\mathcal{V'}^w))\}_w$ and then they give two compatible families $\{\sigma^w_q\in G|_q(\mathcal{U}^w)\}_w$ and $\{\mu^w_q\in G|_q(\mathcal{U'}^w)\}_w$, respectively. It suffices to check   that $\mu_q^w|_{\mathcal{U}^w\cap\mathcal{U'}^w}=\sigma_q^w|_{\mathcal{U}^w\cap\mathcal{U'}^w}$ for $w\in U_q$.  
In the stalks which define $\widetilde{C}\varepsilon_{X*}\widetilde{D}G(q)$, 
we have
\begin{center}
$[[\mu'^w_q]_{q(\mathcal{U'}^w)}]_x=[[s_{qV}]_{q(U)}]_x=[[\sigma'^w_q]_{q(\mathcal{U}^w)}]_x$
\end{center}
for each element $x\in\mathcal{U}^w\cap\mathcal{U'}^w$. Then, we have an open subset $x\in O_x\subset\mathcal{U}^w\cap\mathcal{U'}^w\cap U$ with $[\mu'^w_q]_{q(O_x)}=[\sigma'^w_q]_{q(O_x)}$. The equality yields an open subset $U_x$ of $Open(X)$ satisfying $q(O_x)\subset U_x$ and $\mu'^w_q|_{q^{-1}\varepsilon_X^{-1}(U_x\cap\mathcal{V}^w\cap\mathcal{V'}^w)}=\sigma'^w_q|_{q^{-1}\varepsilon_X^{-1}(U_x\cap\mathcal{V}^w\cap\mathcal{V'}^w)}$. Let $\mathcal{W}_x$ be the intersection $\mathcal{U}^w\cap\mathcal{U'}^w\cap q^{-1}\varepsilon_X^{-1}(U_x\cap\mathcal{V}^w\cap\mathcal{V'}^w)$.  We have an  equality $\mu_q^w|_{\mathcal{W}_x}=\sigma_q^w|_{\mathcal{W}_x}$. The family $\{\mathcal{W}_x\}_{x\in \mathcal{U}^w\cap\mathcal{U'}^w}$ covers $\mathcal{U}^w\cap\mathcal{U'}^w$ and hence $\mu_q^w|_{\mathcal{U}^w\cap\mathcal{U'}^w}=\sigma_q^w|_{\mathcal{U}^w\cap\mathcal{U'}^w}$. 
\end{proof}

\begin{lem}
\text{\rm (The naturality of $\gamma$)}
The construction of $\gamma_{G(q)}$ gives rise to a natural transformation $\gamma:\widetilde{C}(\varepsilon_{X*}\widetilde{D})\longrightarrow id_{Sh_{\mathsf{Diff}}^{\mathcal{C}}\mathcal{D}_{CX}}$.
\end{lem}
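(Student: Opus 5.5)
We follow the proof of Lemma \ref{lem:well-def of alpha}, replacing $\eta_X^*\widetilde{C}\widetilde{D}$ by $\widetilde{C}\varepsilon_{X*}\widetilde{D}$. Lemma \ref{lem:well-def.gamma} already gives well-definedness of each $\gamma_{G(q)}$, independent of the chosen compatible family. So it remains to show that for every morphism $\lambda:G\to G'$ in $Sh_{\mathsf{Diff}}^{\mathcal{C}}\mathcal{D}_{CX}$ and every plot $q\in\mathcal{D}_{CX}$ the square
$$
\xymatrix@C=30pt@R=20pt{
\widetilde{C}\varepsilon_{X*}\widetilde{D}G(q)\ar[r]^{\gamma_{G(q)}}\ar[d]_{(\widetilde{C}\varepsilon_{X*}\widetilde{D}\lambda)_q}&G(q)\ar[d]^{\lambda_q}\\
\widetilde{C}\varepsilon_{X*}\widetilde{D}G'(q)\ar[r]^{\gamma_{G'(q)}}&G'(q)
}
$$
commutes.

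First we make $(\widetilde{C}\varepsilon_{X*}\widetilde{D}\lambda)_q$ explicit. This is the analogue of diagram (\ref{diag:lambda}). The map $\widetilde{D}\lambda_{\varepsilon_X^{-1}(V)}$ acts componentwise by $(s_{pV})_p\mapsto(\lambda_{p^{-1}\varepsilon_X^{-1}(V)}(s_{pV}))_p$ (Definition \ref{defn:wildetildeD}), and $(\varepsilon_{X*}\mu)_V=\mu_{\varepsilon_X^{-1}(V)}$. The map on $q^*\varepsilon_{X*}\widetilde{D}G(U)$ is induced by the universality of the colimit over $q(U)\subset V$ (Definition \ref{defn:overlineC}). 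The maps on stalks are induced by the universality of the stalks. Finally, the sheafification acts as the product of the stalk maps (Definition \ref{defn:sheafification}). Putting these together:
\[
(\widetilde{C}\varepsilon_{X*}\widetilde{D}\lambda)_q\Bigl(\bigl([[(s_{pV})_{p}]_{q(U)}]_v\bigr)_{v\in U_q}\Bigr)=\bigl([[(\lambda_{p|_{p^{-1}\varepsilon_X^{-1}(V)}}(s_{pV}))_{p}]_{q(U)}]_v\bigr)_{v\in U_q}.
\]

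Next we compare the two paths around the square. Take $\xi=(b_v)_v$ in the source. Choose $\mathcal{U}^w$, $\mathcal{V}^w$ and representatives $\sigma'^w=(\sigma'^w_p)_p$ as in Construction \ref{cons:alpha.epsilon}, and set $\sigma^w_q=\sigma'^w_q|_{\mathcal{U}^w}$. Then $\gamma_{G(q)}(\xi)=\Sigma_q$, where $\Sigma_q|_{\mathcal{U}^w}=\sigma^w_q$.

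By the formula above, the image of $\xi$ satisfies condition $(**)$ with the same open sets $\mathcal{U}^w$, $\mathcal{V}^w$ and the representatives $(\lambda(\sigma'^w_p))_p$. So we may compute $\gamma_{G'(q)}$ using the compatible family $\{\lambda_{q|_{\mathcal{U}^w}}(\sigma^w_q)\}_w$. Here we use that $\lambda$ commutes with the restriction along $\mathcal{U}^w\hookrightarrow q^{-1}\varepsilon_X^{-1}(\mathcal{V}^w)$, which is naturality of $\lambda$. By Lemma \ref{lem:well-def.gamma} this choice is legitimate, and it yields $\Sigma'_q$ with $\Sigma'_q|_{\mathcal{U}^w}=\lambda_{q|_{\mathcal{U}^w}}(\sigma^w_q)$.

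On the other side, naturality of $\lambda$ gives $(\lambda_q\Sigma_q)|_{\mathcal{U}^w}=\lambda_{q|_{\mathcal{U}^w}}(\Sigma_q|_{\mathcal{U}^w})=\lambda_{q|_{\mathcal{U}^w}}(\sigma^w_q)$. The sets $\mathcal{U}^w$ cover $U_q$ and $G'|_q$ is a sheaf, so uniqueness of gluing gives $\lambda_q\Sigma_q=\Sigma'_q$.

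The main obstacle is the first step: carefully justifying the formula for $(\widetilde{C}\varepsilon_{X*}\widetilde{D}\lambda)_q$. This requires checking that the nested colimit, colimit and limit in (\ref{eta.ver.element}) are functorial in $G$, with the maps commuting with the germ maps marked $\star$. We would argue this exactly as for diagram (\ref{diag:lambda}), using only universal properties and the fact that $\varepsilon_X^{-1}$ preserves inclusions of open sets.
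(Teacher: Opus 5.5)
Your proposal is correct and follows essentially the same route as the paper. You describe $(\widetilde{C}\varepsilon_{X*}\widetilde{D}\lambda)_q$ as the product of the induced stalk maps $(q^*\varepsilon_{X*}\widetilde{D}\lambda)_v$, compute both composites through the compatible families $\{\sigma^w_q\}_w$ and $\{\lambda_{q|_{\mathcal{U}^w}}(\sigma^w_q)\}_w$ on the same cover $\{\mathcal{U}^w\}_w$, and conclude $\lambda_q(\Sigma_q)=\Sigma'_q$ by uniqueness of gluing in $G'|_q$; your explicit use of the naturality of $\lambda$ along $\mathcal{U}^w\hookrightarrow q^{-1}\varepsilon_X^{-1}(\mathcal{V}^w)$ is the step the paper uses implicitly when it identifies $\lambda_{q^{-1}\varepsilon_X^{-1}(\mathcal{V}^v)}(\sigma'^v_q)$ with $\lambda_{\mathcal{U}^v}(\sigma^v_q)$.
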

\begin{proof}
Let $\lambda:G\to G'$ be a natural transformation. It suffices to show that the following diagram 
\begin{eqnarray}\label{diag.nat.gamma}
\xymatrix@C=40pt{
\widetilde{C}(\varepsilon_{X*}\widetilde{D})G(q)\ar[r]^(.6){\gamma_{G(q)}}\ar[d]_{(\widetilde{C}(\varepsilon_{X*}\widetilde{D})\lambda)_q}&G(q)\ar[d]^{\lambda_q}\\
\widetilde{C}(\varepsilon_{X*}\widetilde{D})G'(q)\ar[r]_(.6){\gamma_{G'(q)}}&G'(q)
}
\end{eqnarray}
is commutative.
First, we consider the map $(\widetilde{C}(\varepsilon_{X*}\widetilde{D})\lambda)_q$ whose construction is explained with the diagram 
\begin{eqnarray}\label{diag:cons.CepDlam}
\xymatrix@C=2pt@R=8pt{
 && & & &(q^{*}\varepsilon_{X*}\widetilde{D}G')_{v''}\\
 &&q^{*}\varepsilon_{X*}\widetilde{D}G'(\mathcal{U}^w)\ar[rrru]\ar[rd]\ar[rr] & &(q^{*}\varepsilon_{X*}\widetilde{D}G')_{v'} &\\
 && &(q^{*}\varepsilon_{X*}\widetilde{D}G')_{v} & &(q^{*}\varepsilon_{X*}\widetilde{D}G)_{v''}\ar@{.>}[uu]_{_{(q^{*}\varepsilon_{X*}\widetilde{D}\lambda)_{v''}}}\\
 \widetilde{D}G'(\varepsilon_X^{-1}(\mathcal{V}^w))\ar[rruu]&&q^{*}\varepsilon_{X*}\widetilde{D}G(\mathcal{U}^w)\ar[uu]_(.3){_{(q^{*}\varepsilon_{X*}\widetilde{D}\lambda)_{\mathcal{U}^w}}}\ar[rd]\ar[rr]\ar[rrru]& &(q^{*}\varepsilon_{X*}\widetilde{D}G)_{v'}\ar@{.>}[uu]_(.7){_{(q^{*}\varepsilon_{X*}\widetilde{D}\lambda)_{v'}}}&\\
 && &(q^{*}\varepsilon_{X*}\widetilde{D}G)_{v}\ar@{.>}[uu]_(.25){_{(q^{*}\varepsilon_{X*}\widetilde{D}\lambda)_{v}}}&&\\
\widetilde{D}G(\varepsilon_{X}^{-1}(\mathcal{V}^w))\ar[uu]^{\widetilde{D}\lambda_{\varepsilon_{X}^{-1}(\mathcal{V}^w)}}.\ar[rruu]&&&&&
}
\end{eqnarray}
The maps $\widetilde{D}\lambda_{\varepsilon^{-1}_{X}(\mathcal{V}^w)}$, $(q^*\varepsilon_{X*}\widetilde{D}\lambda)_{\mathcal{U}^w}$ and $(q^*\varepsilon_{X*}\widetilde{D}\lambda)_v$ are induced by the universalities of $\widetilde{D}G'(\varepsilon^{-1}_X(\mathcal{V}^w))$, $q^*\varepsilon_{X*}\widetilde{D}G(\mathcal{U}^w)$ and each $(q^*\varepsilon_{X*}\widetilde{D}G)_v$, respectively. 
Then, the map $(\widetilde{C}(\varepsilon_{X*}\widetilde{D})\lambda)_q$ 
is defined by 
\begin{center}
$(\widetilde{C}(\varepsilon_{X*}\widetilde{D})\lambda)_q=\prod_{v\in U_q}(q^{*}\varepsilon_{X*}\widetilde{D}\lambda)_{v}$. 
\end{center}
Let $([[( s_{pV}\in G|_p(p^{-1}\varepsilon_X^{-1}(V)) )_{p\in\mathcal{D}_{CX}} ]_{q(U)}]_v)_{v\in U_q}$ be an element of $\widetilde{C}(\varepsilon_{X*}\widetilde{D})G(q)$. 
By considering the germs which give 
$\widetilde{C}(\varepsilon_{X*}\widetilde{D})G(q)$, we see that for each $w\in U_q$, there exist an open set $\mathcal{U}^w$ containing $w$ and a section $\bar{\sigma}^w\in q^*\varepsilon_{X*}\widetilde{D}G(\mathcal{U}^w)$ such that $[\bar{\sigma}^w]_v=[[( s_{pV}\in G|_p(p^{-1}\varepsilon_{X}^{-1}(V)) )_{p\in\mathcal{D}_{CX}} ]_{q(U)}]_v$ for all $v\in\mathcal{U}^w$. We have a representative $(\sigma'^w_p\in G|_p(p^{-1}\varepsilon_{X}^{-1}(\mathcal{V}^w)))_{p\in\mathcal{D}_{CX}}$ in the class of $\bar{\sigma}^w$, where $\mathcal{V}^w$ is an open set of $X$ with $q(\mathcal{U}^w)\subset \mathcal{V}^w$. 
 A compatible family $\{\sigma^w_q:=\sigma'^w_q|_{\mathcal{U}^w}\}_{w\in U_q}$ induced by the element $(([[ s_{qV}\in G|_q(q^{-1}\varepsilon_X^{-1}(V))  ]_{q(U) }]_v)_{v\in U_q})$ yields a section $\Sigma_q\in G(q)$. Its restriction $\Sigma_q|_{\mathcal{U}^w}$ is equal to $\sigma_q^w$ for every $w\in U_q$. We conclude that 
\begin{center}
$\lambda_q\circ\gamma_{G}(q)(([[( s_{pV}\in G|_p(p^{-1}\varepsilon_X^{-1}(V)) )_{p\in\mathcal{D}_{CX}} ]_{q(U)}]_v)_{v\in U_q})=\lambda_q(\Sigma_q)$
\end{center}
and the image satisfies the equalities    $\lambda_q(\Sigma_q)|_{\mathcal{U}^w}=\lambda_{q|_{\mathcal{U}^w}}(\Sigma_q|_{\mathcal{U}^w})=\lambda_{\mathcal{U}^w}(\sigma^w_q)$ for $w\in U_q$. Observe that $\lambda_{q|_{\mathcal{U}^w}}$ in $Sh_{\mathsf{Diff}}^{\mathcal{C}}\mathcal{D}_{CX}$ is identified with $\lambda_{\mathcal{U}^w}$ in $Sh_{\mathsf{Top}}^{\mathcal{C}}Open(U_q)$. 

On the other hand, 
with the left vertical arrow 
in the diagram \ref{diag.nat.gamma}, we see that 
\begin{align*}
  &  (\widetilde{C}(\varepsilon_{X*}\widetilde{D})\lambda)_q(([[( s_{pV} )_{p\in\mathcal{D}_{CX}}]_{q(U) }]_v)_{v\in U_q} )\\
=& \prod_{v\in U_q}(q^{*}\varepsilon_{X*}\widetilde{D}\lambda)_{v}(([[( \sigma'^v_p )_{p\in\mathcal{D}_{CX}}]_{q(\mathcal{U}^v) }]_v)_{v\in U_q})\\
=& ((q^{*}\varepsilon_{X*}\widetilde{D}\lambda)_{v}([[( \sigma'^v_p )_{p\in\mathcal{D}_{CX}}]_{q(\mathcal{U}^v) }]_v))_{v\in U_q}\\
=&([(q^*\varepsilon_{X*}\widetilde{D}\lambda)_{\mathcal{U}^v}([(\sigma'^v_p)_{p\in\mathcal{D}_{CX}}]_{q(\mathcal{U}^v)})]_v)_{v\in{U_q}}\\
=&([[\widetilde{D}\lambda_{\varepsilon^{-1}_X(\mathcal{V}^v)}((\sigma'^v_p)_{p\in\mathcal{D}_{CX}})]_{q(\mathcal{U}^v) }]_v)_{v\in U_q}\\
=&([[(\lambda_{p^{-1}\varepsilon_X^{-1}(\mathcal{V}^v)} (\sigma'^v_p) )_{p\in\mathcal{D}_{CX}}]_{q(\mathcal{U}^v) }]_v)_{v\in U_q}.
\end{align*}
Furthermore, the map $\gamma_{G'(q)}$ gives a projection to the component with the plot $q$, 
$([[\lambda_{q^{-1}\varepsilon_X^{-1}(\mathcal{V}^v)} (\sigma'^v_q) ]_{q(\mathcal{U}^v)}]_v)_{v\in U_q}=([[\lambda_{\mathcal{U}^v}(\sigma^v_q)]_{q(\mathcal{U}^v)}]_v)_{v\in U_q}$, 
a compatible family $\{\lambda_{\mathcal{U}^v}(\sigma^v_q)\in G'|_q(\mathcal{U}^v)\}_{v\in U_q}$ and a section $\Sigma'_q\in G'(q)$ with $\Sigma'_q|_{\mathcal{U}^v}=\lambda_{\mathcal{U}^v}(\sigma^v_q)$ for every $v\in U_q$. Due to the sheafification property(the uniqueness of extensions), we have $\lambda_q(\Sigma_q)=\Sigma'_q$.
\end{proof}

\begin{const}\label{cons:delta}
(The natural transformation $\delta$) 
We construct a natural transformation $\delta:id_{Sh_{\mathsf{Top}}^{\mathcal{C}}Open(X)}\to(\varepsilon_{X*}\widetilde{D})\widetilde{C}$. Let $\theta_X$ be an element of $Sh_{\mathsf{Top}}^{\mathcal{C}}Open(X)$ and $U$ an open subset of $Open(X)$. First, we see that
\begin{eqnarray*}
(\varepsilon_{X*}\widetilde{D})\widetilde{C}\theta_X(U)&=&\widetilde{D}\widetilde{C}\theta_{X}(\varepsilon^{-1}_X(U)) 
=\text{lim}_{q\in\mathcal{D}_{CX}}\widetilde{C}\theta_X|_q(q^{-1}\varepsilon^{-1}_X(U))\\
&=&\text{lim}_{q\in\mathcal{D}_{CX}}\{\hspace{2pt}(b_v)_v\in \prod_{v\in q^{-1}\varepsilon^{-1}_X(U)}(q^*\theta_X)_v|(**)\hspace{2pt}\},
\end{eqnarray*}
where $(**)$ is the sheafification condition in Definition \ref{defn:top.sheafification} and $(q^*\theta_X)_v$ is the stalk of $q^*\theta_X$ at $v$. Observe that $(q^*\theta_X)_v=\text{colim}_{v\in W}\hspace{2pt}\text{colim}_{q(W)\subset V,\hspace{2pt}V\in Open(X)}\hspace{2pt}\theta_X(V)$. 

Let $s_U$ be an element of $\theta_X(U)$ and $q$ a plot of $\mathcal{D}_{CX}$. To construct a map $\delta_{\theta_X}(U):\theta_X(U)\to((\varepsilon_{X*}\widetilde{D})\widetilde{C}\theta_X)(U)$, we make an element of $((\varepsilon_{X*}\widetilde{D})\widetilde{C}\theta_X)(U)$ with $s_U$. Because of the relation $q(q^{-1}\varepsilon^{-1}_X(U))=\varepsilon^{-1}_X(U)\subset U$, we have a family of germs $([[s_U]_{q(q^{-1}\varepsilon^{-1}_X(U))}]_v)_{v\in q^{-1}\varepsilon^{-1}_X(U)}\in \prod_{v\in q^{-1}\varepsilon^{-1}_X(U)}(q^*\theta_X)_v$. Moreover, it follows from the equalities 
\[p(p^{-1}\varepsilon_X^{-1}(U))=qf(f^{-1}q^{-1}\varepsilon^{-1}_X(U))=q(q^{-1}\varepsilon^{-1}_X(U))
\]
that the map $\widetilde{C}\theta_X(f|_{f^{-1}q^{-1}\varepsilon^{-1}_X(U)})$ sends the element $([[s_U]_{q(q^{-1}\varepsilon^{-1}_X(U))}]_v)_{v\in q^{-1}\varepsilon^{-1}_X(U)}$ to another element $([[s_U]_{p(p^{-1}\varepsilon^{-1}_X(U))}]_u)_{u\in p^{-1}\varepsilon^{-1}_X(U)}\in \prod_{u\in p^{-1}\varepsilon_X^{-1}(U)}(p^*\theta_X)_u$ for a morphism $f:p\to q$ of $\mathcal{D}_{CX}$, 
where $f(u)=v$.  The commutative diagram 
$$
\xymatrix@C=18pt@R=5pt{
 && & & &(p^{*}\theta_X)_{u}\\
 &&\text{colim}_{p(p^{-1}\varepsilon^{-1}_X(U))}\theta_X(U)\ar[rrru]\ar[rd]\ar[rr] & &(p^{*}\theta_X)_{u'} &\\
 && &(p^{*}\theta_X)_{u''} & &( q^{*}\theta_X)_{v}\ar@{.>}[uu]\\
 &&\text{colim}_{q(q^{-1}\varepsilon^{-1}_X(U))}\theta_X(U)\ar[uu]_(.3){(f|_{p^{-1}\varepsilon^{-1}_X(U)})^{*}}\ar[rd]\ar[rr]\ar[rrru]& &( q^{*}\theta_X)_{v'}\ar@{.>}[uu]&\\
 && &(q^{*}\theta_X)_{v''}\ar@{.>}[uu]&&\\
 \theta_X(U)\ar[rruu]\ar[rruuuu]&&&&&
}
$$
shows this situation, where $f(u)=v, f(u')=v'$ and $ f(u'')=v''$. Therefore, we have an element  $(([[s_U]_{q(q^{-1}\varepsilon^{-1}_X(U))}]_v)_{v\in q^{-1}\varepsilon^{-1}_X(U)})_{q\in\mathcal{D}_{CX}}\in(\varepsilon_{X*}\widetilde{D})\widetilde{C}\theta_X(U)$. Then, we define $\delta_{\theta_X(U)}$ by
\begin{eqnarray}
\delta_{\theta_X(U)}(s_U)= (([[s_U]_{q(q^{-1}\varepsilon^{-1}_X(U))}]_v)_{v\in q^{-1}\varepsilon^{-1}_X(U)})_{q\in\mathcal{D}_{CX}}.
\end{eqnarray}
\end{const}

\begin{lem}\label{lem.nat.delta}
\text{\rm (The naturality of $\delta$)}
The construction of \ref{cons:delta} gives rise to a natural transformation $\delta:id_{Sh_{\mathsf{Top}}^{\mathcal{C}}Open(X)}\to (\varepsilon_{X*}\widetilde{D})\widetilde{C}$.
\end{lem}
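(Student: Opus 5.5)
We have to show two things for each sheaf $\theta_X\in Sh_{\mathsf{Top}}^{\mathcal{C}}Open(X)$ and each open $U$: that $\delta_{\theta_X(U)}$ is compatible with the restriction maps of $\theta_X$, and that for each morphism $\lambda:\theta_X\to\theta'_X$ the square
$$
\xymatrix@C=45pt{
\theta'_X(U)\ar[r]^(.4){\delta_{\theta'_X(U)}}&(\varepsilon_{X*}\widetilde{D})\widetilde{C}\theta'_X(U)\\
\theta_X(U)\ar[r]^(.4){\delta_{\theta_X(U)}}\ar[u]^{\lambda_{U}}&(\varepsilon_{X*}\widetilde{D})\widetilde{C}\theta_X(U)\ar[u]_{((\varepsilon_{X*}\widetilde{D})\widetilde{C}\lambda)_{U}}
}
$$
commutes. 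Well-definedness, i.e. that the family lies in the limit over $\mathcal{D}_{CX}$, was already checked in Construction \ref{cons:delta}. The plan mirrors the proof of Lemma \ref{lem:well-def of beta} almost verbatim, with $\eta_X q$ replaced by $q$ and $W$ replaced by $\varepsilon_X^{-1}(U)$.

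For compatibility with restrictions, take $U'\subset U$. The restriction $\theta_X(U)\to\theta_X(U')$ and the restriction of $(\varepsilon_{X*}\widetilde{D})\widetilde{C}\theta_X$ both act componentwise on representatives. For each plot $q$, the restriction of the limit is induced by $q^{-1}\varepsilon_X^{-1}(U')\subset q^{-1}\varepsilon_X^{-1}(U)$, and on each stalk it is the identity on germs. It therefore sends $[[s_U]_{q(q^{-1}\varepsilon_X^{-1}(U))}]_v$ to $[[s_U|_{U'}]_{q(q^{-1}\varepsilon_X^{-1}(U'))}]_v$, because the colimit $\text{colim}_{q(\cdot)\subset V}\theta_X(V)$ identifies $s_U$ with its restriction once the image is contained in $U'$. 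Hence $\delta_{\theta_X}$ is a morphism of presheaves.

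For naturality in $\lambda$, I would first describe $((\varepsilon_{X*}\widetilde{D})\widetilde{C}\lambda)_U$ explicitly, exactly as diagram (\ref{diag:eta.c.lambda}) does. The map $\overline{C}\lambda$ sends $[s_V]_{q(W)}\mapsto[\lambda_V(s_V)]_{q(W)}$, by Definition \ref{defn:overlineC}. The map $\mathfrak{a}_X$ then acts by the induced stalk maps $\prod_v(q^*\lambda)_v$, by Definition \ref{defn:sheafification}. Finally $\widetilde{D}$ applies this componentwise in $q$, by Definition \ref{defn:wildetildeD}. Evaluating on $\delta_{\theta_X(U)}(s_U)$ gives
$$
\bigl(([[\lambda_U(s_U)]_{q(q^{-1}\varepsilon_X^{-1}(U))}]_v)_{v}\bigr)_{q\in\mathcal{D}_{CX}},
$$
which is by definition $\delta_{\theta'_X(U)}(\lambda_U(s_U))$.

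The only step needing care is this last identification: that the stalk map $(q^*\lambda)_v$ sends the double germ of $s_U$ to the double germ of $\lambda_U(s_U)$. This follows from the universality of the two nested colimits, since the structure maps $\theta_X(U)\to q^*\theta_X(W)\to(q^*\theta_X)_v$ commute with $\lambda$. I expect this bookkeeping of nested colimits, rather than any genuine difficulty, to be the main obstacle; it is handled by the same commutative-diagram argument as in Lemma \ref{lem:well-def of beta}.
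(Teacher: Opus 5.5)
Your proposal is correct and is essentially the paper's argument. The paper's diagram (\ref{diag.nat.delta2}) encodes exactly your computation: $(\varepsilon_{X*}\widetilde{D}\widetilde{C}\lambda)_U$ acts on the double germs $[[s_U]_{q(q^{-1}\varepsilon_X^{-1}(U))}]_v$ through the maps induced by $\lambda$ on the nested colimits and stalks, so $\delta_{\theta_X(U)}(s_U)$ goes to $\delta_{\theta'_X(U)}(\lambda_U(s_U))$. The only addition is your explicit check that each $\delta_{\theta_X}$ commutes with restrictions along $U'\subset U$; the paper leaves this implicit, as it does for $\beta$, and your check is correct.
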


\begin{proof}
It suffices to show that the diagram
\begin{eqnarray}\label{diag.nat.delta}
\xymatrix@C=40pt@R=20pt{
\theta_X(U)\ar[r]^(.4){\delta_{\theta_X(U)}}\ar[d]_{\lambda_U}&(\varepsilon_{X*}\widetilde{D})\widetilde{C}\theta_X(U)\ar[d]^{(\varepsilon_{X*}\widetilde{D}\widetilde{C}\lambda)_U}\\
\theta'_X(U)\ar[r]_(.4){\delta_{\theta'_X(U)}}&(\varepsilon_{X*}\widetilde{D})\widetilde{C}\theta_X(U)
}
\end{eqnarray}
 is commutative for each morphism $\lambda:\theta_X\to\theta'_X$ and $U\in Open(X)$. 
We recall the definition of $\delta$. In the following diagram (\ref{diag.nat.delta2}), arrows with stars and rhombuses
give rise to the morphisms $\delta_{\theta_X}(U)$ and $\delta_{\theta'_X}(U)$, respectively.  
\begin{eqnarray}\label{diag.nat.delta2}
\xymatrix@C=8pt@R=8pt{
 && & & &(q^{*}\theta_X)_{v''}\ar@{.>}[dd]\\
&&\text{colim}_{q(q^{-1}\varepsilon^{-1}_X(U))\subset V}\theta_X(V)\ar[dd]\ar[rrru]^{\star}\ar[rd]^{\star}\ar[rr]^{\star} & &(q^{*}\theta_X)_{v'}\ar@{.>}[dd] &\\
 && &(q^{*}\theta_X)_{v}\ar@{.>}[dd] & &(q^{*}\theta'_X)_{v''}\\
 \theta_X(U)\ar[dd]_{\lambda_U}\ar[rruu]^{\star}&&\text{colim}_{q(q^{-1}\varepsilon^{-1}_X(U)\subset V}\theta'_X(V)\ar[rd]_{\diamond}\ar[rr]_(.55){\diamond}\ar[rrru]_(.6){\diamond}& &(q^{*}\theta'_X)_{v'}&\\
 && &(q^{*}\theta'_X)_{v}&&\\
\theta'_X(U)\ar[rruu]_{\diamond}&&&&&
}
\end{eqnarray}
Moreover, the dotted arrows induce 
the map $(\varepsilon_{X*}\widetilde{D}\widetilde{C}\lambda)_U$.  The commutativity of the diagram (\ref{diag.nat.delta2}) yields that of the diagram (\ref{diag.nat.delta}).
\end{proof}

\begin{proof}[Proof of Theorem \ref{thm:main,epsilon.ver}.]
First, we show the commutativity of the diagram
\begin{eqnarray}
 \xymatrix@C=30pt@R=20pt
 {
\widetilde{C}\theta_X(p)\ar[r]^(.4){\widetilde{C}\delta_{\theta_X}(p)}\ar[rd]_{id_{\widetilde{C}\theta_X}(p)}&\widetilde{C}(\varepsilon_{X*}\widetilde{D})\widetilde{C}\theta_X(p)\ar[d]^{\gamma_{\widetilde{C}\theta_X}(p)}\\
&\widetilde{C}\theta_X(p).
}
\end{eqnarray}
Let $([[s_V]_{p(U)}]_u)_{u\in U_p}$ be an element of $\widetilde{C}\theta_X(p)$, where $s_V\in\theta_X(V)$, $u\in U$ and $p(U)\subset V$. 
By the definition of $\delta$, we have  
\begin{center}
$\widetilde{C}\delta_{\theta_{X}}(p)\bigl(([[s_V]_{p(U)}]_u)_{u\in U_p}\bigr)=\biggl( \biggl[ \Bigl[ \bigl( \bigl( \bigl[ [s_V]_{q(q^{-1}\varepsilon_{X}^{-1}(V))} \bigr]_w \bigr)_{w\in q^{-1}\varepsilon_X^{-1}(V)} \bigr)_{q\in\mathcal{D}_{CX}} \Bigl]_{p(U)}\biggr]_u \biggr)_{u\in U_p}.$
\end{center}
Moreover, we apply $\gamma_{\widetilde{C}\theta_X}(p)$ to this element. We take a compatible family 
\begin{center}
$\Bigl\{\bigl(\bigl[ [s_V]_{p(p^{-1}\varepsilon_{X}^{-1}(V))} \bigr]_v \bigr)_{v\in \mathcal{U}^w}\Bigr\}_{w\in U_p}$. 
\end{center}
Using the sheaf property of $\widetilde{C}\theta_X(p)$, we obtain  $\gamma_{\widetilde{C}\theta_X}(p)\circ\widetilde{C}\delta_{\theta_{X}}(p)=\Sigma_p\in\widetilde{C}\theta_X(p)$ such that $\Sigma_p|_{\mathcal{U}^w}=\bigl(\bigl[[s_V]_{p(p^{-1}\varepsilon_{X}^{-1}(V))}\bigr]_v\bigr)_{v\in\mathcal{U}^w}$.  On the other hand, for every $w$, the restriction of $([[s_V]_{p(U)}]_u)_{u\in U_p}$ to $\mathcal{U}^w$ is equal to $([[s_V]_{p(U)}]_u)_{u\in \mathcal{U}^w}$.
By the sheaf condition again, we have $\Sigma_p=([[s_V]_{p(U)}]_u)_{u\in U_p}$.

We consider the other triangle identity which yields the commutativity of the diagram  
\begin{eqnarray}
 \xymatrix@C=30pt@R=20pt
 {
 \varepsilon_{X*}\widetilde{D}G(U)\ar[r]^(.4){\delta_{\varepsilon_{X*}\widetilde{D}G}(U)}\ar[rd]_{id_{\varepsilon_{X*}\widetilde{D}G}(U)}&(\varepsilon_{X*}\widetilde{D})\widetilde{C}(\varepsilon_{X*}\widetilde{D})G(U)\ar[d]^{\varepsilon_{X*}\widetilde{D}\gamma_{G}(U)}\\
 &\varepsilon_{X*}\widetilde{D}G(U).
 }
 \end{eqnarray}
 Let $(s_{qU}\in G|_q(q^{-1}\varepsilon^{-1}_X(U)))_{q\in\mathcal{D}_{CX}}$ be an element of $\varepsilon_{X*}\widetilde{D}G(U)$. By the definition of $\delta$, we have
 \begin{center}
 $\delta_{\varepsilon_{X*}\widetilde{D}G}(U)((s_{qU})_{q\in\mathcal{D}_{CX}})=\biggl( \biggl( \Bigl[\big[ ( s_{qU} )_{q\in\mathcal{D}_{CX}} \bigr]_{p(p^{-1}\varepsilon_X^{-1}(U))} \Bigr]_u \biggr)_{u\in p^{-1}\varepsilon_X^{-1}(U)} \biggr)_{p\in\mathcal{D}_{CX}}$.
 \end{center}
 By applying the map $\varepsilon_{X*}\widetilde{D}\gamma_{G}(U)$ to this image, we choose a compatible family $\{s_{pU}\in G|_p(p^{-1}\varepsilon^{-1}_X(U))\}$ for each $p\in\mathcal{D}_{CX}$. The construction of $\gamma$ enables us to conclude that   $\varepsilon_{X*}\widetilde{D}\gamma_{G}(U)\circ \delta_{\varepsilon_{X*}\widetilde{D}G}(U)((s_{qU})_{q\in\mathcal{D}_{CX}})=(s_{qU})_{q\in\mathcal{D}_{CX}}$.
\end{proof}

We recall the fact that $CDC=C$ and $DCD=D$; see Section \ref{sect:diff}. The particular cases, the functors $\eta_{C(X,Open(X))}$ and   
$\varepsilon_{D(X,\mathcal{D}_X)}$ are indeed the identity ones. Observe that both of $\eta_{X}$ and $\varepsilon_{X}$ are identity maps on the underlying set. 


\begin{lem}\label{lem:id.functor.eta}
For a topological space $(X,Open(X))$, let $\mathcal{D}_{CX}$ and $\mathcal{D}_{CDCX}$ be the categories of plots of $C(X,Open(X))$ and $CDC(X,Open(X))$, respectively. Then, the smooth map 
$$
\eta_{CX}:=\eta_{C(X,Open(X))}:C(X,Open(X))\to CDC(X,Open(X))
$$ 
induces the functor $\eta_{CX}^*:Sh_{\mathsf{Diff}}^{\mathcal{C}}\mathcal{D}_{CDCX}\to Sh_{\mathsf{Diff}}^{\mathcal{C}}\mathcal{D}_{CX}$. Moreover, the functor $\eta^*_{CX}$ is the identity functor.
\end{lem}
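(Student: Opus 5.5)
The proof has two parts. First we show that $\eta_{CX}^*$ is well defined as a functor between the stated sheaf categories, exactly as for $\eta_X^*$ in Definition \ref{defn:eta^*}. Then we show that the diffeologies $\mathcal{D}_{CX}$ and $\mathcal{D}_{CDCX}$ coincide as sets of parametrizations, so that $\eta_{CX}$ is literally the identity of $C(X,Open(X))$ and $\eta^*_{CX}$ is the identity functor.

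For well-definedness, apply Definition \ref{defn:eta^*} with the diffeological space $C(X,Open(X))$ in place of $(X,\mathcal{D}_X)$. For $F\in Sh_{\mathsf{Diff}}^{\mathcal{C}}\mathcal{D}_{CDCX}$ and a plot $p\in\mathcal{D}_{CX}$, set $\eta_{CX}^*F(p)=F(\eta_{CX}\circ p)$. Since $\eta_{CX}$ is smooth, postcomposition with it gives a functor of plot categories $\mathcal{D}_{CX}\to\mathcal{D}_{CDCX}$, $p\mapsto \eta_{CX}\circ p$, $f\mapsto f$. The point is that a morphism $f:p\to q$ satisfies $q\circ f=p$, hence $(\eta_{CX}q)\circ f=\eta_{CX}p$. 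Therefore $\eta_{CX}^*F$ is the precomposition of $F$ with this functor, so it is a presheaf. The sheaf condition holds because $(\eta_{CX}^*F)|_p=F|_{\eta_{CX}\circ p}$ as presheaves on $Open(U_p)$ and $U_{\eta_{CX}p}=U_p$. The empty plot is sent to the empty plot, so the terminal-object condition is preserved. Morphisms are handled by $(\eta_{CX}^*\lambda)_p=\lambda_{\eta_{CX}\circ p}$.

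For the identity claim, Remark \ref{rem:C-D} gives $CDC=C$, so $CDC(X,Open(X))$ and $C(X,Open(X))$ are the same diffeological space, and $\mathcal{D}_{CDCX}=\mathcal{D}_{CX}$. This can also be checked directly. A plot of $CDC X$ is a map continuous into $DCX$. Every $D$-open set of $CX$ is detected by continuous plots, and every open set of $X$ is $D$-open. Continuity of a map from a Euclidean open set into $DCX$ is therefore equivalent to continuity into $X$: $X\to DCX$ is not continuous in general, but for maps out of Euclidean domains it is, by the standard fact $CDC=C$ of \cite{Sh,CSW}.

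The unit $\eta_{CX}$ is the identity map on the underlying set $X$. A smooth identity map between two equal diffeological spaces is the identity morphism, so $\eta_{CX}\circ p=p$ for every $p$. It follows that $\eta^*_{CX}F(p)=F(p)$, $\eta^*_{CX}F(f)=F(f)$ and $(\eta^*_{CX}\lambda)_p=\lambda_p$, and $\eta^*_{CX}$ is the identity functor on $Sh_{\mathsf{Diff}}^{\mathcal{C}}\mathcal{D}_{CX}$.

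The only substantive step, and the one I expect to be the main obstacle, is the equality $\mathcal{D}_{CDCX}=\mathcal{D}_{CX}$. I would cite it from Remark \ref{rem:C-D} rather than reprove it; if a self-contained argument is wanted, it comes down to showing that a map from a Euclidean open set into $X$ is continuous exactly when it is continuous for the $D$-topology of $CX$.
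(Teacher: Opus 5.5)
Your proposal is correct and follows the paper's proof. Both rest on $\eta_{CX}$ being the identity on underlying sets together with $CDC=C$ (Remark \ref{rem:C-D}), which gives $\eta^*_{CX}F(p)=F(\eta_{CX}\circ p)=F(p)$; you also spell out the well-definedness of $\eta^*_{CX}$ that the paper leaves as ``easily seen.''

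One small point concerns your optional direct check: its closing appeal to $CDC=C$ is circular and unnecessary, because the two facts you list already suffice. A continuous $p$ is a plot of $CX$, hence continuous into $DCX$ by the definition of $D$-open sets. Conversely, the $D$-topology of $CX$ refines $Open(X)$, so a map continuous into $DCX$ is continuous into $X$.
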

\begin{proof}
Let $F\in Sh_{\mathsf{Diff}}^{\mathcal{C}}\mathcal{D}_{CX}$. For a plot $p\in C(X,Open(X))$, the definition of $\eta$ gives rise to  equalities $\eta_{CX}^*F(p)=F(\eta_{CX}\circ p)=F(p)$. 
The result follows from the fact that $CDC=C$.  
\end{proof}

\begin{lem}\label{lem:id.functor.epsilon}
For a diffeological space $(X,\mathcal{D}_X)$, let $Open(DX)$ and $Open(DCDX)$ be the categories of open subsets of $D(X,\mathcal{D}_X)$ and $DCD(X,\mathcal{D}_X)$, respectively. Then, the continuous map 
$$
\varepsilon_{DX}:=\varepsilon_{D(X,\mathcal{D}_X)}:DCD(X,\mathcal{D}_X)\to D(X,\mathcal{D}_X)
$$ 
induces the functor $\varepsilon_{DX*}:=\varepsilon_{D(X,\mathcal{D}_X)*}:Sh_{\mathsf{Top}}^{\mathcal{C}}Open(DCDX)\to Sh_{\mathsf{Top}}^{\mathcal{C}}Open(DX)$. Moreover, the functor $\varepsilon_{DX*}$ is the identity functor.
\end{lem}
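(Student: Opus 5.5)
The plan mirrors the proof of Lemma \ref{lem:id.functor.eta}, with the roles of $C$ and $D$ exchanged and pushforward in place of pullback. First, $\varepsilon_{DX}$ is a continuous map $DCD(X,\mathcal{D}_X)\to D(X,\mathcal{D}_X)$. By Definition \ref{defn:epsilon_*}, applied to the topological space $D(X,\mathcal{D}_X)$, it yields the pushforward functor
\[
\varepsilon_{DX*}:Sh_{\mathsf{Top}}^{\mathcal{C}}Open(DCDX)\to Sh_{\mathsf{Top}}^{\mathcal{C}}Open(DX).
\]
This gives the existence and well-definedness of the induced functor, since the pushforward of a sheaf is a sheaf, as observed in that definition.

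Next I use Remark \ref{rem:C-D}: $DCD=D$. Hence $DCD(X,\mathcal{D}_X)$ and $D(X,\mathcal{D}_X)$ have the same underlying set $X$ and the same topology, so $Open(DCDX)=Open(DX)$ as categories. Since the counit $\varepsilon_{DX}$ is the identity map on the underlying set $X$, for every $U\in Open(DX)$ we have $\varepsilon_{DX}^{-1}(U)=U$, again a $D$-open set.

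Then, for $\theta\in Sh_{\mathsf{Top}}^{\mathcal{C}}Open(DCDX)$, the definition gives $\varepsilon_{DX*}\theta(U)=\theta(\varepsilon_{DX}^{-1}(U))=\theta(U)$. Restriction maps agree because the inclusions $\varepsilon_{DX}^{-1}(U')\hookrightarrow\varepsilon_{DX}^{-1}(U)$ are literally $U'\hookrightarrow U$. For a morphism $\mu$ we get $(\varepsilon_{DX*}\mu)_U=\mu_{\varepsilon_{DX}^{-1}(U)}=\mu_U$. Thus $\varepsilon_{DX*}$ is the identity on objects and morphisms.

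The only point needing care is the identification $DCD=D$ as an equality of topologies, not merely a natural isomorphism. I would justify it from the cited \cite[Proposition 3.1]{Sh} and \cite[Proposition 3.3]{CSW}, together with the fact that both functors act as the identity on underlying sets. In particular, $\varepsilon_{DX}$ is then the identity homeomorphism, by the triangle identity $\varepsilon_{D}\circ D\eta=\mathrm{id}_D$ combined with $D\eta_X$ being the identity map.
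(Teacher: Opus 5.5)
Your proposal is correct and follows the paper's proof. Since $\varepsilon_{DX}$ is the identity on the underlying set, $\varepsilon_{DX*}\theta(U)=\theta(\varepsilon_{DX}^{-1}(U))=\theta(U)$, and $DCD=D$ gives $Open(DCDX)=Open(DX)$; your checks on restriction maps and morphisms, and your remark that the continuous identity maps $D\eta_X$ and $\varepsilon_{DX}$ in opposite directions force the two topologies to coincide, just fill in details the paper leaves implicit.
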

\begin{proof}
Let $G\in Sh_{\mathsf{Top}}^{\mathcal{C}}Open(DCDX)$ and $U\in D(X,\mathcal{D}_X)$. By the definition of $\varepsilon$, we have  equalities $\varepsilon_{DX*}G(U)=G(\varepsilon^{-1}_{DX}(U))=G(U)$. Moreover, thanks to the relation $DCD=D$, we have the result.
\end{proof}

\begin{cor}\label{cor:adjunction}
For a topological space $(Y,Open(Y))$ and  a diffeological space $(X,\mathcal{D}_X)$, let $\mathcal{D}_{CY}$ and $\mathcal{D}_{CDX}$ be the categories of $plots$ of $C(Y,Open(Y))$ and $CD(X,\mathcal{D}_X)$. Furthermore, let $Open(DX)$ and $Open(DCY)$ be the categories of open subsets of $D(X,\mathcal{D}_X)$ and $DC(Y,Open(Y))$, respectively. Then, 
one has adjunctions 
$$
\xymatrix@C=30pt@R=15pt{
Sh_{\mathsf{Top}}^{\mathcal{C}}Open(DX)\ar@<1.0ex>[r]^{\widetilde{C}}_{\perp}&Sh_{\mathsf{Diff}}^{\mathcal{C}}\mathcal{D}_{CDX}\ar@<1.7ex>[l]^{\widetilde{D}}  \ \ \   \text{and} \\
Sh_{\mathsf{Diff}}^{\mathcal{C}}\mathcal{D}_{CY}\ar@<.8ex>[r]^(.4){\widetilde{D}}_(.4){\top}&Sh_{\mathsf{Top}}^{\mathcal{C}}Open(DCY). \ar@<1.7ex>[l]^(.6){\widetilde{C}}  \ \ \ \ \ 
}
$$
\end{cor}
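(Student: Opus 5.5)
The plan is to specialize Theorems \ref{thm:main,eta.ver.} and \ref{thm:main,epsilon.ver} to the particular spaces $C(Y,Open(Y))$ and $D(X,\mathcal{D}_X)$, and then remove the correcting functors $\eta^*$ and $\varepsilon_*$ using Lemmas \ref{lem:id.functor.eta} and \ref{lem:id.functor.epsilon}.

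For the second adjunction, apply Theorem \ref{thm:main,eta.ver.} to the diffeological space $C(Y,Open(Y))$ in place of $(X,\mathcal{D}_X)$. This gives $\eta_{CY}^*\widetilde{C}\dashv\widetilde{D}$. Here $\widetilde{D}\colon Sh_{\mathsf{Diff}}^{\mathcal{C}}\mathcal{D}_{CY}\to Sh_{\mathsf{Top}}^{\mathcal{C}}Open(DCY)$ and $\widetilde{C}\colon Sh_{\mathsf{Top}}^{\mathcal{C}}Open(DCY)\to Sh_{\mathsf{Diff}}^{\mathcal{C}}\mathcal{D}_{CDCY}$. By Remark \ref{rem:C-D}, $CDC=C$, so $\mathcal{D}_{CDCY}=\mathcal{D}_{CY}$ as categories of plots. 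Lemma \ref{lem:id.functor.eta} then says that $\eta_{CY}^*$ is the identity functor. Hence $\widetilde{C}\dashv\widetilde{D}$, which is the second displayed adjunction with $\widetilde{D}$ on the left.

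For the first adjunction, apply Theorem \ref{thm:main,epsilon.ver} to the topological space $D(X,\mathcal{D}_X)$. This gives $\widetilde{C}\dashv\varepsilon_{DX*}\widetilde{D}$. Here $\widetilde{C}\colon Sh_{\mathsf{Top}}^{\mathcal{C}}Open(DX)\to Sh_{\mathsf{Diff}}^{\mathcal{C}}\mathcal{D}_{CDX}$ and $\widetilde{D}\colon Sh_{\mathsf{Diff}}^{\mathcal{C}}\mathcal{D}_{CDX}\to Sh_{\mathsf{Top}}^{\mathcal{C}}Open(DCDX)$. Since $DCD=D$, we have $Open(DCDX)=Open(DX)$. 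Lemma \ref{lem:id.functor.epsilon} says that $\varepsilon_{DX*}$ is the identity, so $\widetilde{C}\dashv\widetilde{D}$.

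The only real point to check is that these identifications are equalities and not merely isomorphisms, so that the units and counits $\alpha,\beta,\gamma,\delta$ transfer verbatim and the triangle identities hold unchanged. This should follow because $\eta$ and $\varepsilon$ are the identity on underlying sets, and the equalities $CDC=C$, $DCD=D$ hold on the nose (same plots, same open sets). Therefore $\eta^*_{CY}F(p)=F(p)$ and $\varepsilon_{DX*}G(U)=G(U)$ strictly, on both objects and morphisms.
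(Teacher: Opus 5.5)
Your proposal is correct and matches the paper's proof. The first adjunction comes from Theorem~\ref{thm:main,epsilon.ver} applied to $D(X,\mathcal{D}_X)$ together with Lemma~\ref{lem:id.functor.epsilon}, and the second from Theorem~\ref{thm:main,eta.ver.} applied to $C(Y,Open(Y))$ together with Lemma~\ref{lem:id.functor.eta}; your remark that $CDC=C$ and $DCD=D$ hold on the nose, so that $\eta_{CY}^*$ and $\varepsilon_{DX*}$ are literally identity functors, is exactly what those lemmas supply.
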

\begin{proof} The first adjunction follows from Theorem \ref{thm:main,epsilon.ver} and Lemma \ref{lem:id.functor.epsilon}. 
The second one is obtained by Theorem \ref{thm:main,eta.ver.} and 
Lemma \ref{lem:id.functor.eta}. 
\end{proof}

{\it Acknowledgments.} The author thanks Alireza Ahmadi and Jordan Watts for their useful comments on the first version of this work.


\appendix
\section{A diffeological sheaf as a topos}\label{sect:AppA}
We introduce a site given by a diffeology in \cite{AA}.  As a consequence, we see that 
the sheaf $Sh_{\mathsf{Diff}}^{\mathsf{Sets}}\mathcal{D}_{X}$ is a Grothendieck topos on the site. Thus, the diffeological sheaves are entirely incorporated in topos theory.  

\subsection{A Grothendieck topos}
We begin with the definition of a Grothendieck pretopology.

\begin{defn}\cite{SGA}
Let $\mathcal{E}$ be a category. A Grothendieck pretopology (a basis for a Grothendieck topology) on $\mathcal{E}$ is a condition Cov for each object $p$ of $\mathcal{E}$, which assigns a colloection $\text{Cov}(p)$ of families of morphisms with  fixed target $p$ and satisfies the following axioms.
\begin{enumerate}
\item[i)] For all objects $p$ in $\mathcal{E}$, the morphisms consisting of the families in  $\text{Cov}(p)$ have the fibre products along every morphism of $\mathcal{E}$.
\item[ii)] For all object $p$ of $\mathcal{E}$, all families $(p_{\alpha}\to p)_{\alpha\in A}$ in $\text{Cov}(p)$ and all morphism $q\to p$ ($q\in Ob(\mathcal{E})$), the family of pullbacks $(p_{\alpha}\times_{p}q\to q)_{\alpha\in A}$ is in $Cov(q)$.
\item[iii)] If $(p_{\alpha}\to p)_{\alpha\in A}$ is in $\text{Cov}(p)$ and $(p_{\beta_{\alpha}}\to p_{\alpha})_{\beta_{\alpha}\in B_{\alpha}}$ is in $\text{Cov}(p_{\alpha})$ for each $\alpha\in A$, then the family of the compositions $(p_{\beta_{\alpha}}\to p_{\alpha}\to p)_{\beta_{\alpha}\in B_{\alpha}, \hspace{2pt}\alpha\in A}$ is in $\text{Cov}(p)$.
\item[iv)] The family $(p\stackrel{id_p}{\to} p)$ is in $\text{Cov}(p)$.

\end{enumerate}
\end{defn}

 We may call the collection $\text{Cov}(p)$ and an element of $\text{Cov}(p)$ a {\it covering} and a {\it covering family}, respectively. The pair $(\mathcal{E}, \text{Cov})$ is called a {\it site}. $\mathsf{Sets}$-valued presheaf $F$ is a sheaf if $F$ satisfies the following condition (*) on a site $(\mathcal{E}, \text{Cov})$.

\begin{itemize}
\item[](*) : For an object $p$ of the site $\mathcal{E}$ and a covering family $\{i_{i}:p_{i}\to p\}_{i\in I_{p}}$ in $\text{Cov}(p)$, 
let $p_{i}\times_{p}p_{j}$ be the pull back of $p_{i}\to p$ and $p_{j}\to p$, $\rho_{ij} : p_{i}\times_{p}p_{j} \to p_i$ and 
$\rho_{ij}' : p_{i}\times_{p}p_{j} \to p_j$  the natural maps, respectively. 
Then, one has an equalizer diagram 
$$
 \xymatrix@C=40pt{
 F(p)\ar[r]^-{\prod F(i_{i})}&\prod_{i\in I_{p}}F(p_{i})\ar@<0.5ex>[r]^(.35){\pi_1}\ar@<-0.5ex>[r]_(.35){\pi_2}&\prod_{(i,j)\in I_{p}\times I_{p}}F(p_{i}\times_{p} p_{j}), 
 }
$$
where $\pi_1$ and $\pi_2$ are defined by $\prod_{i,j}F(\rho_{ij})$ and 
$\prod_{i,j}F(\rho'_{ij})$.
\end{itemize}

 We introduce 
a basis $\text{Cov}$ on the category $\mathcal{D}_{X}$.

\begin{prop}\cite{AA}
Let $(X,\mathcal{D}_{X})$ be a diffeological space and $\mathcal{D}_X$ the category of plots. For each plot $p\in\mathcal{D}_X$, let $Open(U_p)$ be the category of open subsets on $U_p$ and $\{U_i\}_{i\in I_p}$ an open cover of $U_p$. The family of inclusions $\{U_i\hookrightarrow U_p\}_{i\in I_p}$  gives the family of inclusions of $\mathcal{D}_X$ naturally, $\{p_i:=p|_{U_i}\hookrightarrow p\}_{i\in I_p}$. Let $\text{Cov}(p)$ be the set of families of these inclusions. Then, it gives rise to a Grothendieck topology.
\end{prop}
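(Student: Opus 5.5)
We have to verify axioms i)–iv) of a Grothendieck pretopology for the families $\{p|_{U_i}\hookrightarrow p\}_{i\in I_p}$, where $\{U_i\}$ ranges over open covers of $U_p$. The central point is that pullbacks of such inclusions along arbitrary morphisms of $\mathcal{D}_X$ exist and are again restrictions of plots to open subsets. Once this is in place, axioms ii)–iv) reduce to elementary facts about open covers in Euclidean domains.

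\textbf{Step 1 (fibre products, axiom i).} Let $\iota:p|_{U_i}\hookrightarrow p$ be a member of a covering family and $f:q\to p$ any morphism of $\mathcal{D}_X$, so $f:U_q\to U_p$ is smooth with $p\circ f=q$. We claim that $q|_{f^{-1}(U_i)}$, with the inclusion $f^{-1}(U_i)\hookrightarrow U_q$ and the restriction $f|_{f^{-1}(U_i)}:f^{-1}(U_i)\to U_i$, is the fibre product $p|_{U_i}\times_p q$.
\begin{itemize}
\item The set $f^{-1}(U_i)$ is open because $f$ is continuous.
\item The restriction $q|_{f^{-1}(U_i)}$ is a plot by (D3).
\item The square commutes on underlying maps to $X$.
\end{itemize}
For the universal property, take a plot $r$ and morphisms $g:r\to p|_{U_i}$ and $h:r\to q$ with $\iota\circ g=f\circ h$ as maps $U_r\to U_p$. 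Then $h(U_r)\subset f^{-1}(U_i)$, so $h$ corestricts to a smooth map $U_r\to f^{-1}(U_i)$. This map commutes with the plots, and it is unique because the inclusion $f^{-1}(U_i)\hookrightarrow U_q$ is injective. We would also record the special case of two members of the same cover: $p|_{U_i}\times_p p|_{U_j}=p|_{U_i\cap U_j}$.

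\textbf{Step 2 (stability, axiom ii).} By Step 1, the pulled-back family along $f:q\to p$ is $\{q|_{f^{-1}(U_i)}\hookrightarrow q\}_i$. Since $\bigcup_i f^{-1}(U_i)=f^{-1}(U_p)=U_q$, the sets $f^{-1}(U_i)$ form an open cover of $U_q$, so this family lies in $\text{Cov}(q)$. Pullbacks are only determined up to isomorphism, so we would take $\text{Cov}$ to be closed under isomorphic families, or simply choose the canonical pullback above.

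\textbf{Step 3 (transitivity and identity, axioms iii and iv).} For iii), let $\{U_{\beta_\alpha}\}_{\beta_\alpha}$ be an open cover of $U_\alpha$ for each $\alpha$. Each $U_{\beta_\alpha}$ is open in $U_p$, and the union of all of them is $U_p$. The composite of inclusions $p|_{U_{\beta_\alpha}}\hookrightarrow p|_{U_\alpha}\hookrightarrow p$ is the inclusion $p|_{U_{\beta_\alpha}}\hookrightarrow p$. Hence the composite family lies in $\text{Cov}(p)$. For iv), the one-element cover $\{U_p\}$ gives $\{p\xrightarrow{id_p}p\}$.

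\textbf{Main obstacle.} The only delicate point is Step 1. Fibre products in $\mathcal{D}_X$ need not exist in general, so we must show that pullbacks of open inclusions exist, not merely exhibit a candidate. The key fact is that a smooth map factoring set-theoretically through an open subset factors smoothly through it. A secondary subtlety is the empty plot: when $f^{-1}(U_i)=\emptyset$, the pullback is the empty plot $\mathfrak{e}$, which is in $\mathcal{D}_X$, so nothing breaks. We would also note explicitly that covering families are considered up to isomorphism.
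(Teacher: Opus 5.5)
Your proposal is correct. The paper gives no argument of its own for this proposition; it only cites \cite[Proposition 3.1]{AA}. So your direct verification of axioms i)--iv) is a self-contained replacement for that citation rather than a competing route.

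The step that does real work is Step 1. Fibre products in $\mathcal{D}_X$ need not exist in general, so exhibiting $q|_{f^{-1}(U_i)}$ as an actual pullback is necessary. Your two ingredients suffice: a smooth map landing set-theoretically in an open subset factors smoothly through it, and the inclusion $f^{-1}(U_i)\hookrightarrow U_q$ is monic. Axioms ii)--iv) are then elementary facts about open covers.

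Your special case $p|_{U_i}\times_p p|_{U_j}=p|_{U_i\cap U_j}$ is worth recording. The paper uses it tacitly in the proof of Proposition \ref{prop:topos_Sh}, where it identifies the site-theoretic equalizer with the Krepski--Watts--Wolbert sheaf condition on $U_p$ and justifies this only by noting that ``all covering families consist of inclusions.''

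Your remark about choosing the canonical pullback, or closing $\text{Cov}$ under isomorphic families, correctly handles the fact that $\text{Cov}(p)$ is defined as literal families of inclusions.
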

\begin{proof}
See \cite[Proposition 3.1.]{AA}.
\end{proof}



\subsection{A topos on a diffeological space}
The category of $\mathsf{Sets}$-valued sheaves on the site $\mathcal{J}_{X}:=(\mathcal{D}_X, \text{Cov})$ is denoted by $\mathscr{S}_{\mathsf{Diff}}^{\mathsf{Sets}}(X,\mathcal{J}_{X})$, namely, the full subcategory  of the functor category $\mathsf{Sets}^{(\mathcal{D}_X)^{\rm op}}$ on $\mathcal{D}_{X}$ with values in $\mathsf{Sets}$ consisting of sheaves. We observe that $Sh_{\mathsf{Diff}}^{\mathsf{Sets}}\mathcal{D}_{X}$ is also a full subcategory of 
$\mathsf{Sets}^{(\mathcal{D}_X)^{\text{op}}}=PSh_{\mathsf{Diff}}^{\mathsf{Sets}}\mathcal{D}_X$. 

The following result is already known by Alireza Ahmadi and Jordan Watts.

\begin{prop}\label{prop:topos_Sh}
Let $(X,\mathcal{D}_{X})$ be a diffeological space. 
The Grothendieck topos $\mathscr{S}_{\mathsf{Diff}}^{\mathsf{Sets}}(X,\mathcal{J}_{X})$ coincides with the category  $Sh_{\mathsf{Diff}}^{\mathsf{Sets}}\mathcal{D}_{X}$. 
\end{prop}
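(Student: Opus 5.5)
Both categories are full subcategories of the same presheaf category $PSh_{\mathsf{Diff}}^{\mathsf{Sets}}\mathcal{D}_X=\mathsf{Sets}^{(\mathcal{D}_X)^{\rm op}}$. It therefore suffices to show that a presheaf $F:\mathcal{D}_X^{op}\to\mathsf{Sets}$ satisfies condition (*) for the site $\mathcal{J}_X$ if and only if $F|_p$ is a sheaf on $Open(U_p)$ for every plot $p$ and $F(\mathfrak{e})$ is a singleton. I will compare the two conditions covering by covering, using the explicit description of $\text{Cov}(p)$ in the preceding proposition. A covering family of $p$ is $\{p|_{U_i}\hookrightarrow p\}_{i\in I_p}$ with $\{U_i\}$ an open cover of $U_p$. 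In either direction, the data $F(p|_{U_i})=F|_p(U_i)$ agree by the definition of $F|_p$.

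The first step is to compute the fibre products appearing in (*). I will show that in $\mathcal{D}_X$ the pullback of $p|_{U_i}\hookrightarrow p$ and $p|_{U_j}\hookrightarrow p$ is $p|_{U_i\cap U_j}$ with the two inclusions. Given a plot $r$ with morphisms $g:r\to p|_{U_i}$ and $h:r\to p|_{U_j}$ that agree after composing to $p$, we have $\iota_i\circ g=\iota_j\circ h$ as smooth maps into $U_p$, because the inclusions are injective. Hence this common map lands in $U_i\cap U_j$, and it factors uniquely through $p|_{U_i\cap U_j}$; this is also a morphism of plots since $p\circ\iota=p|_{U_i\cap U_j}$. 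The same computation handles axiom ii), whose pullback along $f:q\to p$ is $q|_{f^{-1}(U_i)}$, though that is not needed here. With the fibre products identified, the equalizer diagram in (*) for the covering $\{U_i\}$ is literally the classical sheaf equalizer for $F|_p$ on the cover $\{U_i\}$ of $U_p$. The only difference is that (*) indexes over all pairs $(i,j)$ while the paper's convention uses $i<j$, and these give the same equalizer since the diagonal terms and symmetric pairs impose no new conditions.

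Next, I claim that $F$ satisfies (*) for all $p$ and all covers exactly when every $F|_p$ is a sheaf on $U_p$. The reason is that for a fixed $p$, the covers of the open set $U_p$ itself range over all of $\text{Cov}(p)$. Covers of a smaller open $V\subset U_p$ are handled by applying (*) to the plot $p|_V$, using $F|_{p|_V}=F|_p$ restricted to $Open(V)$, as observed in the proof of Proposition \ref{prop:well-def.sheafification}.

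The step I expect to need care is the terminal-object condition $F(\mathfrak{e})=\ast$. For the empty plot, the empty family is an open cover of $U_{\mathfrak{e}}=\phi$, so it lies in $\text{Cov}(\mathfrak{e})$. For this family, (*) says that $F(\mathfrak{e})$ is the equalizer of two maps between empty products, i.e. a singleton. Conversely, the classical sheaf condition on $Open(U_p)$ applied to the empty cover of $\phi\subset U_p$ gives $F|_p(\phi)=F(\mathfrak{e})=\ast$, so the extra clause in Definition \ref{defn:sheaf} is automatic on the topos side and consistent on the other. I will state explicitly that empty covering families are allowed, since that is what makes the two definitions agree on the nose. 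Then the two full subcategories have the same objects, and the coincidence follows.
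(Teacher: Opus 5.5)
Your proposal is correct and takes the same route as the paper: a direct comparison of condition (*) with the Krepski--Watts--Wolbert sheaf condition, using that every covering family consists of restrictions $p|_{U_i}\hookrightarrow p$ to an open cover of $U_p$. It is more complete than the paper's sketch. The paper never identifies the fibre products $p|_{U_i\cap U_j}$, never reduces covers of smaller opens $V\subset U_p$ to the restricted plot $p|_V$, and never uses the empty cover of $\mathfrak{e}$ to match the terminal-object clause; your proposal supplies all three.
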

\begin{proof} For each plot $p \in \mathcal{D}_X$, the covering $\text{Cov}(p)$ contains a covering family of the form 
$\{ p_i \to p\}_{i\in I}$ with $\cup_{i\in I} U_{p_i} = U_p$. 
Then, it follows that every object of $\mathscr{S}_{\mathsf{Diff}}^{\mathsf{Sets}}(X,\mathcal{J}_{X})$ satisfies the condition of $Sh_{\mathsf{Diff}}^{\mathsf{Sets}}\mathcal{D}_{X}$. Thus, we have an inclusion functor $\mathscr{S}_{\mathsf{Diff}}^{\mathsf{Sets}}(X,\mathcal{J}_{X})\to Sh_{\mathsf{Diff}}^{\mathsf{Sets}}\mathcal{D}_{X}$.  


On the other hand, Let $F$ be an object of $Sh_{\mathsf{Diff}}^{\mathsf{Sets}}\mathcal{D}_{X}$. We show that the diagram 
$$
 \xymatrix@C=40pt{
 F(p)\ar[r]^-{\prod F(\iota_{\lambda})}&\prod_{\lambda\in \Lambda_{p}}F(p_{\lambda})\ar@<0.5ex>[r]^(.35){\pi_{1}}\ar@<-0.5ex>[r]_(.35){\pi_{2}}&\prod_{(\lambda,\lambda')\in \Lambda_{p}\times \Lambda_{p}}F(p_{\lambda}\times_{p} p_{\lambda'})  
}
$$
obtained  by a covering family $\{\iota_{\lambda}:p_{\lambda}\to p\}_{\lambda\in\Lambda_{p}}\in \text{Cov}(p)$ is the equalizer for a plot $p$. Note that all covering families consist of inclusions. Then, all elements of $Sh_{\mathsf{Diff}}^{\mathsf{Sets}}\mathcal{D}_X$ satisfy the condition of $\mathscr{S}_{\mathsf{Diff}}^{\mathsf{Sets}}(X, \mathcal{J})$.

\end{proof}

\begin{rem}
The result allows us to conclude that a sheaf in the sense in \cite{AA} is nothing but one of \cite{KWW}. 
\end{rem}


\section{The sheafification functor of presheaves on a diffeological space.}\label{sect:AppB}

 The sheafification functor $\mathfrak{a}_X$ for presheaves on a diffeological space is induced in Definition \ref{defn:top.sheafification}. We give the proof of Proposition \ref{prop:sheafification}.

\begin{proof}[Proof of theorem \ref{prop:sheafification}]
Let $P$ be an object of $PSh_{\mathsf{Diff}}^{\mathcal{C}}\mathcal{D}_{X}$ and $Q$ an object of $Sh_{\mathsf{Diff}}^{\mathcal{C}}\mathcal{D}_{X}$. 
We define a functor $g_{XP}^D : P \to \mathfrak{a}_{X}(P)$. Moreover, it is proved that for a map $\kappa_{X}^{D}:P\to \mathfrak{i}_{X}(Q)$ of $PSh_{\mathsf{Diff}}^{\mathcal{C}}\mathcal{D}_{X}$, there exist a unique map $\widetilde{\kappa}_{X}^{D}$ which makes the diagram 
\[
\xymatrix@C=25pt@R=10pt{
P\ar[rr]^{\kappa_{X}^{D}}\ar[rd]_{g^D_{XP}}&&\mathfrak{i}_{X}(Q)\\
&\mathfrak{a}_XP\ar[ru]_{\widetilde{\kappa}_{X}^{D}}\ar@{}|{\circlearrowright}[u]&
}
\]
commutative and a natural bijection $\text{Hom}_{Sh}(\mathfrak{a}_{X}(P),Q)\cong \text{Hom}_{PSh}(P,\mathfrak{i}_{X}(Q))$.

 Let  $\mathfrak{i}_{X}:Sh_{\mathsf{Diff}}^{\mathcal{C}}\mathcal{D}_{X}\to PSh_{\mathsf{Diff}}^{\mathcal{C}}\mathcal{D}_{X}$ be the inclusion functor defined by 
$\mathfrak{i}_{X}(Q)(p)=i_{U_{p}}(Q|_{p})(U_{p})$ for each plot $p\in \mathcal{D}_{X}$ and $Q\in Sh_{\mathsf{Diff}}^{\mathcal{C}}(X,\mathcal{D}_{X})$, where $i_{U_{p}}$ denotes the inclusion functor  $Sh_{\mathsf{Top}}^{\mathcal{C}}Open(U_p)\hookrightarrow PSh_{\mathsf{Top}}^{\mathcal{C}}Open(U_p)$. For a presheaf $P$,  
we define $g_{XP}^D : P \to \mathfrak{a}_{X}(P)$ by 
\[(g_{XP}^D)_p(s_p):=(g_{U_{p}P|_p}^T)_{U_p}(s_p)=([s_p]_u)_{u\in U_p}
\]
for each plot $p$ and $s_p\in P(p)(=P|_p(U_p))$, where $g_{U_p P|_p}^T$ is same with $g_{XP}$ in Theorem \ref{thm:univ.sheafification} but that the subscripts of the underlying set and the functor  are $U_p$ and $P|_p$ instead of $X$ and $P$, respectively. Then, for $f:p\to q$ in $\mathcal{D}_X$, we have commutative diagram  
$$
\xymatrix@R=20pt@C=50pt{
P(q)\ar[r]^(.4){(g_{XP}^D)_q}\ar[d]_{P(f)}&\mathfrak{a}_{X}P(q)\ar[d]^{\mathfrak{a}_XP(f)}\\
P(p)\ar[r]_(.4){(g_{XP}^D)_p}&\mathfrak{a}_{X}P(p)
}
$$
 which shows the map $g_{XP}^D$ is a natural transformation. The commutativity follows from the  construction of $g_{XP}^D$ and $\mathfrak{a}_XP(f)$; see Proposition  \ref{prop:well-def.sheafification}. Thus, $g_{XP}^{D}:P\to\mathfrak{a}_XP$ is a morphism of $PSh_{\mathsf{Diff}}^{\mathcal{C}}\mathcal{D}_X$. 
For a given morphism $\kappa_X^D$ on $PSh_{\mathsf{Diff}}^{\mathcal{C}}\mathcal{D}_X$ and each plot $p$, the map $(\kappa_{X}^{D})_{p}:P(p)\to\mathfrak{i}_X(Q)(p)$ gives rise to the map $(\kappa_{U_p}^T)_{U_p}:P|_{p}(U_{p})\to i_{U_{p}}(Q|_{p})(U_{p})$ in $\mathsf{Top}$. Then, for each open subset $W\subset U_{p}$, the same way as the assignment   $\kappa\mapsto\widetilde{\kappa}$ in Theorem \ref{thm:univ.sheafification} enable us to define $(\widetilde{\kappa}_{X}^{D})_{p|_{W}}$ with $(\widetilde{\kappa}_{U_p}^T)_{W}$ on $\mathsf{Top}$.

We have to check that $\widetilde{\kappa}_X^D$ is well defined. It is straightforward to see the naturality of $\widetilde{\kappa}_{X}^{D}$. Note that the construction of $((\kappa_{U_p}^T)_W\mapsto (\widetilde{\kappa}_{U_p}^T)_W)=((\kappa_{X}^D)_{p|_W}\mapsto (\widetilde{\kappa}_{X}^D)_{p|_W})$ is same as that  of  $(\lambda_p\mapsto(\mathfrak{a}_X\lambda)_p)$ in Proposition \ref{prop:sheafification0}. By construction of the map  $(\widetilde{\kappa}_X^D)_{p|_W}$, we see that  the  diagram 
$$
\xymatrix@C=20pt@R=5pt{
P(p|_{W})\ar[rr]^(.5){(\kappa_{X}^{D})_{p|_{W}}}\ar[rd]_(.4){(g^D_{XP})_{p|_W}}\ar[dd]_{=}&&\mathfrak{i}_{X}(Q)(p|_{W})\ar[dd]^{=}\\
&\mathfrak{a}_XP(p|_{W})\ar[ru]_{(\widetilde{\kappa}_{X}^{D})_{p|_{W}}}\ar[dd]^(.6){=}&\\
P|_{p}(W)\ar[rr]^(.3){(\kappa_{U_{p}}^{T})_{W}}\ar[rd]_(.4){(g_{U_p P|_p}^T)_W}&&i_{U_p}(Q|_{p})(W)\\
&a_{U_p}P|_{p}(W)\ar[ru]_{(\widetilde{\kappa}_{U_{p}}^{T})_{W}}&
}
$$
is commutative. Additionally, since  $\widetilde{\kappa}_{U_p}^T$ is unique, it follows that the map $(\widetilde{\kappa}^D_X)_p$ is a unique map for each $(\kappa_X^D)_p$. If there exists another map $(\widetilde{\kappa}'^D_{X})_p$ satisfying $(\widetilde{\kappa}'^D_{X})_p\circ (g_{XP}^D)_p=(\widetilde{\kappa}_X^D)_p$, we have $(\widetilde{\kappa}'^T_{U_p})_p$  with $(\widetilde{\kappa}'^T_{U_p})_{U_p}\circ (g_{U_p P|_p}^T)_{U_p}=(\kappa_{U_p}^T)_{U_p}$ naturally. But it contradicts the universality of $a_{U_p}$.  Then, $\widetilde{\kappa}^D_X$ is a unique map on $PSh_{\mathsf{Diff}}^{\mathcal{C}}\mathcal{D}_X$ with $\widetilde{\kappa}_X^D\circ g_{XP}^D=\kappa_X^D$.

We show the existence of a natural bijection $\text{Hom}_{PSh}(P,\mathfrak{i}_{X}(Q))\cong\text{Hom}_{Sh}(\mathfrak{a}_{X}(P),Q)$. Let $\theta^{D}_{X}: \text{Hom}_{PSh}(P,\mathfrak{i}_{X}(Q))\to \text{Hom}_{Sh}(\mathfrak{a}_{X}(P),Q)$ be the map defined by $\kappa_{X}^{D}\mapsto\widetilde{\kappa}^{D}_{X}$. 
For each $p\in\mathcal{D}_{X}$, the sheafification functor $a_{U_{p}}$ of presheaves on $U_p$ gives rise to a bijection $\theta_{U_{p}}^{T}:\text{Hom}_{PSh}(P|_{p},i_{U_p}(Q|_{p}))\to \text{Hom}_{Sh}(a_{U_{p}}(P|_{p}),Q|_{p})$. Let $\{\eta_{U_{p}}^{T}\}_{p\in\mathcal{D}_{X}}$ denote the collection of inverses of $\{\theta_{U_{p}}^{T}\}_{p\in\mathcal{D}_{X}}$. Observe that $\eta_{U_{p}}^{T}$ is defined by the composite with $g^T_{U_p P|_p}$ and a map of $\text{Hom}_{Sh}(a_{U_{p}}(P|_{p}),Q|_{p})$. Then, the commutative diagram 
\begin{eqnarray}\label{diag:theta}
\xymatrix@C=25pt@R=20pt{
\text{Hom}_{Sh}(\mathfrak{a}_{X}(P),Q)\ar@{^{(}-_>}[d]_{(2)}&\text{Hom}_{PSh}(P,\mathfrak{i}_{X}(Q))\ar[l]_{\theta^{D}_{X}}\ar@{^{(}-_>}[d]^{(1)}\\
\coprod_{p\in\mathcal{D}_{X}} \text{Hom}_{Sh}(a_{U_{p}}(P|_{p}),Q|_{p})\ar@<.5ex>[r]^{\coprod_{p}\eta^{T}_{U_{p}}}&\coprod_{p\in\mathcal{D}_{X}}\text{Hom}_{PSh}(P|_{p},i_{U_p}(Q|_{p}))\ar@<.5ex>[l]^{\coprod_{p}\theta^{T}_{U_{p}}}
}
\end{eqnarray}
enables us to deduce that the map $\theta_{X}^{D}$ is bijective. In fact, if there exist $\kappa_X^D$ and $\kappa'^D_X$  in $\text{Hom}_{Sh}(P,\mathfrak{i}_X(Q))$ such that  $\theta_X^D(\kappa_X^D)=\theta_X^D(\kappa'^D_X)$, through the inclusions $(1)$ and $(2)$, we have the equality $\{\kappa_{U_p}^T\}_{p\in\mathcal{D}_X}=\{\kappa'^T_{U_p}\}_{p\in\mathcal{D}_X}$. This yields that $\kappa_X^D=\kappa'^D_X$. We show the surjectivity of $\theta_X^D$. 
For a given $\sigma\in \text{Hom}_{Sh}(\mathfrak{a}_{X}^{D}(P),Q)$, we obtain the collection of morphisms $\{\sigma_p\}_{p\in\mathcal{D}_X}$ and it gives a family of maps $\{\sigma_{U_{p}}\}_{p}\in\coprod_{p\in\mathcal{D}_{X}} \text{Hom}_{Sh}(a_{U_{p}}(P|_{p}),Q|_{p})$, because $\sigma_{p}$ is identified with the morphism $\sigma_{U_p}:a_{U_{p}}(P|_{p})(U_{p})\to Q|_{p}(U_{p})$. It is easily seen that each element $\sigma_{U_p}$ is compatible with other components. Moreover, we have $\coprod_{p}\eta^{T}_{U_{p}}(\{\sigma_{U_{p}}\}_{p})=\{s_{U_{p}}:=g_{U_p P|_{p}}^T(\sigma_{U_p})\}_{p}\in\coprod_{p\in\mathcal{D}_{X}} \text{Hom}_{PSh}(P|_{p},i_{U_p}(Q|_{p}))$. 
The family of maps $\{s_{U_{p}}\}_{p}$ gives rise to a natural transformation $s\in \text{Hom}_{PSh}(P,\mathfrak{i}_{X}^{D}(Q))$. We have $\theta^{D}_{X}(s)=\sigma$. Finally, the naturalities for $P\in PSh_{\mathsf{Diff}}^{\mathcal{C}}\mathcal{D}_X$ and $Q\in Sh_{\mathsf{Diff}}^{\mathcal{C}}\mathcal{D}_X$ follows from that in the bottom sequence and the injections $(1)$ and $(2)$.
\end{proof}

\end{document}